\theoremstyle{plain} 
\newtheorem{theorem}{Theorem}[section] 
\newtheorem{corollary}[theorem]{Corollary} 
\newtheorem{proposition}[theorem]{Proposition}
\newtheorem{lemma}[theorem]{Lemma} 
\newtheorem{prop}[theorem]{Proposition} 
\theoremstyle{definition} 
\newtheorem{definition}[theorem]{Definition}
\newtheorem{remark}[theorem]{Remark}
\let\spt=\supp
\DeclareMathOperator{\dist}{dist}
\newcommand{\N}{\mathbb{N}}
\newcommand{\R}{\mathbb{R}}
\newcommand{\e}{\varepsilon}
\numberwithin{equation}{section}
\title{Convergence of the Allen-Cahn equation \\
with Neumann boundary conditions}
\author{Masashi Mizuno}
\author{Yoshihiro Tonegawa}
\thanks{M. Mizuno worked done during a visit to the Institut
Mittag-Leffler (Djursholm, Sweden). This work was supported by JSPS
KAKENHI Grant Numbers 21224001, 25800084, 25247008}
\address[Masashi Mizuno]{Department of Mathematics, College of Science
and Technology, Nihon University, Tokyo 101-8308 JAPAN}
\email{mizuno@math.cst.nihon-u.ac.jp}
\address[Yoshihiro Tonegawa]{Department of Mathematics, Hokkaido
University, Sapporo 060-0810 JAPAN}
\email{tonegawa@math.sci.hokudai.ac.jp}
\keywords{Boundary monotonicity formula, Allen-Cahn equation, mean curvature flow, varifold}
\subjclass[2000]{28A75,35K20,53C44}
\begin{document}

\begin{abstract}
 We study a singular limit problem of the Allen-Cahn equation with Neumann
 boundary conditions and general initial data of uniformly
 bounded energy. We prove that the time-parametrized family of limit energy measures 
 is Brakke's mean curvature flow with a generalized right angle condition on the boundary. 
 \end{abstract}

\maketitle

\section{Introduction}

We consider the following Allen-Cahn equation:
\begin{equation}
 \label{eq:1.1}
  \left\{
   \begin{array}{ll}
    \partial_tu^\varepsilon
    =\Delta u^\varepsilon
    -\frac{W'(u^\varepsilon)}{\varepsilon^2},&
    t>0,\ x\in\Omega,
    \\
    \frac{\partial u^\varepsilon}{\partial\nu}\Big|_{\partial\Omega}
    =0,&
    t>0,\\
    u^\varepsilon(x,0)
    =u_0^\varepsilon(x),
    &x\in\Omega,
   \end{array}
  \right.
\end{equation}
where $\Omega\subset\R^n$ is a bounded domain with smooth boundary,
$\varepsilon>0$ is a small positive parameter, $\nu$ is the outer unit
normal vector field on $\partial\Omega$ and $W$ is a bi-stable potential with
two equal wells at $\pm 1$.  $W(u)=\frac14(1-u^2)^2$ is a typical example. The
equation \eqref{eq:1.1} is a gradient flow of
\begin{equation*}
 E^\varepsilon[u]
  :=\int_\Omega
  \left(
  \frac\varepsilon2|\nabla u|^2
  +\frac{W(u)}{\varepsilon}
  \right)
  \,dx
\end{equation*}
as one may check easily that $\frac{dE^{\varepsilon}}{dt}\leq 0$. Under the assumption 
that a given family $\{u^{\varepsilon}_0\}_{0<\varepsilon<1}$ satisfies
\begin{equation*}
\sup_{0<\varepsilon<1} E^{\varepsilon}[u^{\varepsilon}_0]<\infty,
\end{equation*}
it is interesting to study the limiting behavior of the solution $u^{\e}$ 
of \eqref{eq:1.1} as $\varepsilon\rightarrow 0$. Heuristically, one expects
that the finiteness assumption for $E^{\varepsilon}[u^{\varepsilon}(\cdot,t)]$ for very small $\varepsilon$ implies a `phase separation',
i.e., $\Omega$ is mostly divided into two regions where $u^{\varepsilon}(\cdot,t)$ is close to $1$ 
on one of them and to $-1$ on the other, 
with thin `transition layer' of order
$\varepsilon$ thickness separating these two regions. With this heuristic picture,
one may also expect that the following measures $\mu_t^{\e}$ defined by 
\begin{equation}
 d\mu_t^\varepsilon
  :=\left(
  \frac\varepsilon2|\nabla u^{\e}(x,t)|^2
  +\frac{W(u^{\e}(x,t))}{\varepsilon}
  \right)
  \,dx \label{defmu}
\end{equation}
behave more or less like surface measures of moving phase boundaries. 
It is thus interesting and natural to study
$\lim_{\varepsilon\rightarrow 0}\mu^{\varepsilon}_t$. 
By the well-known heuristic argument using 
the signed distance functions to the moving phase boundaries composed 
with the one-dimensional standing wave solution of $\varepsilon^2 u''=W'(u)$, 
one may also expect
that the motion of the phase boundaries is the mean curvature flow
(abbreviated hereafter as MCF). The 
rigorous proof of this in the most general setting, on the other hand, requires
extensive use of tools from geometric measure theory. 

The singular limit of \eqref{eq:1.1} without boundary is studied by many
researchers with different settings and assumptions.  The most relevant
among them to the present paper is Ilmanen's work \cite{MR1237490},
which showed that the limit measures of $\mu_t^\varepsilon$ are the MCF
in the sense of Brakke \cite{MR0485012} (where $\Omega=\R^n$). There was
a technical assumption in \cite{MR1237490} on the initial condition,
which was removed by Soner \cite{MR1674799}. The second author observed
that Ilmanen's work can be extended to bounded domains, and showed that
the limit measures have integer densities a.e$.$ modulo division by a
constant \cite{Tonegawa1}.  If the densities are equal to 1 a.e., it has
been proved recently that the support of the measures is smooth a.e$.$
as well \cite{MR0485012,arXiv:1111.0824,Tonegawa2}.  By these works,
interior behavior of the limit measures has been rigorously
characterized as Brakke's MCF.  There are numerous earlier and relevant
results on \eqref{eq:1.1} and we additionally mention
\cite{MR1101239,MR1153311,MR1055457,MR1177477,MR2383536,MR978829,MR2440879,Soner1,Soner3,arXiv:1307.6629}
which is by no means an exhaustive listing.

For the problem with Neumann boundary conditions, 
one may heuristically expect that the 
limit phase boundaries intersect $\partial\Omega$ with 90 degree angle. 
Katsourakis et al. 
\cite{MR1341031} basically proved this connecting 
the singular limit of \eqref{eq:1.1} to the unique
viscosity solutions of level set equations of the MCF with
right angle boundary conditions studied in
\cite{MR1235189,MR1287918}. 
The differences of the present paper from  
\cite{MR1341031} are explained as follows. While
one does not know in \cite{MR1341031} 
if the particular individual level set
obtained as a singular limit of \eqref{eq:1.1} satisfies MCF equation or
boundary conditions in some measure-theoretic sense, 
we show that the limit measure satisfies Brakke's inequality
with a generalized right angle condition. If we assume that
the limit measure has density 1 a.e$.$, then, it is smooth a.e$.$ in the
interior due to \cite{MR0485012,arXiv:1111.0824,Tonegawa2}. We also 
obtain a characterization for any finite energy initial data in $W^{1,2}(\Omega)$
and not necessarily for a carefully prepared initial data. 
Perhaps the most insightful aspect of the present paper is that
our study motivates a
measure-theoretic formulation of Brakke's MCF up to the 
boundary (see Section 2.4) for which one may further pursue the establishment of 
up to the boundary regularity theorem. 

More technically speaking, in this paper, we prove that (1) the limit measures $\mu_t$ 
have bounded first variation on $\overline\Omega$ for a.e$.$ $t\geq 0$,
(2) $\mu_t$ is $n-1$-rectifiable on $\overline\Omega$ and 
integral (modulo division by a constant) on $\Omega$ for a.e$.$ $t\geq 0$, (3) 
$\mu_t$ satisfies Brakke's inequality of MCF up to the boundary with a 
suitable modification
for the first variation on $\partial\Omega$. If we assume in addition that 
$\mu_t(\partial \Omega)=0$, then the right angle condition on the boundary is satisfied in
the sense that the first variation of $\mu_t$ on $\partial \Omega$ is perpendicular to $\partial \Omega$. We make an 
assumption that $\Omega$ is strictly convex, even though some generalization is possible
(see Section \ref{finalremark}). The proof uses various 
ideas developed through \cite{MR1237490,Tonegawa1,arXiv:1307.6629}. In those
paper, the Huisken/Ilmanen monotonicity formula played a central role and the 
situation is the same 
in this paper as well. We first prove up to the boundary monotonicity formula by 
a boundary reflection method, and this leads us to similar estimates as 
in the interior case. We need to be concerned with measures concentrated on $\partial \Omega$
as well as the limit of `boundary measures of phase boundary'. 
All those quantities are incorporated in the final formulation appearing in Theorem \ref{theorem2.6}. 

The paper is organized as follows. We explain notation and main results in Section 2. In Section 3 we obtain
up to the boundary monotonicity formula. The formula is not useful until we obtain
an $\e$-independent estimate on the 
so-called discrepancy in Section 4. Section 5 shows the existence of converging subsequence for
all time, and Section 6 shows the vanishing of the discrepancy which is the key to show the
main result. Combining all the ingredients, Section 7 finally proves the main results of the paper. 
\section{Preliminaries and main results}
\subsection{Basic notation}
Let ${\mathbb N}$ be the set of natural numbers and ${\mathbb R}^+:=\{x\geq 0\}$. 
For $0<r<\infty$ and $a\in {\mathbb R}^k$, define $B_r^k(a):=\{x\in {\mathbb R}^k\, :\,
|x-a|<r\}$. When $k=n$, we omit writing $k$ and we write $B_r:=B_r^n(0)$. 
The Lebesgue measure is denoted by ${\mathcal L}^n$ and the 
$k$-dimensional Hausdorff measure is denoted by ${\mathcal H}^k$. 
Let $\omega_n:={\mathcal L}^n (B_1)$. 

For any Radon measure $\mu$ on ${\mathbb R}^n$ and $\phi\in C_c({\mathbb R}^n)$
we often write $\mu(\phi)$ for $\int \phi\, d\mu$. We write ${\rm spt}\,\mu$ for the support 
of $\mu$. Thus $x\in {\rm spt}\, \mu$ if $\forall r>0$, $\mu(B_r(x))>0$. We use the standard
notation for the Sobolev spaces such as $W^{1,p}(\Omega)$ from \cite{Gilbarg}.

For $A,B\in {\rm Hom}({\mathbb R}^n;{\mathbb R}^n)$ which we identify with $n\times n$
matrices, we define
\begin{equation*}
A\cdot B:=\sum_{i,j} A_{ij}B_{ij}.
\end{equation*}
The identity of ${\rm Hom}({\mathbb R}^n;{\mathbb R}^n)$ is denoted by $I$. For 
$k\in {\mathbb N}$ with $k<n$, let ${\bf G}(n,k)$ be the space of $k$-dimensional
subspaces of ${\mathbb R}^n$. 
For $S\in {\bf G}(n,k)$, we identify $S$ with the corresponding orthogonal projection
of ${\mathbb R}^n$ onto $S$ and its matrix representation. 
For $a\in {\mathbb R}^n$, $a\otimes a\in {\rm Hom}
({\mathbb R}^n;{\mathbb R}^n)$ is the matrix with the entries $a_i a_j$ ($1\leq i,j\leq n$).
For any unit vector $a\in {\mathbb R}^n$, $I-a\otimes a\in {\bf G}(n,n-1)$. 
For $x,y\in {\mathbb R}^n$ and $t<s$, define
\begin{equation}
\rho_{(y,s)}(x,t):=\frac{1}{(4\pi (s-t))^{\frac{n-1}{2}}} e^{-\frac{|x-y|^2}{4(s-t)}}.
\label{defrho}
\end{equation}
\subsection{Varifold}
We recall some definitions related to varifolds and refer to \cite{MR0397520,MR756417}
for more details. In this paper, for a bounded open set $\Omega\subset{\mathbb R}^n$, 
we need to consider various objects on $\overline\Omega$ instead of $\Omega$. For 
this reason, let $X\subset {\mathbb R}^n$ be either open or compact in the following.  
Let $G_k(X):=X\times {\bf G}(n,k)$. A general
$k$-varifold in $X$ is a Radon measure on $G_k(X)$. 
We denote the set of all general $k$-varifold in $X$ by ${\bf V}_k (X)$. 
For $V\in {\bf V}_k (X)$, let $\|V\|$ be the weight measure of $V$,
namely, 
\begin{equation*}
\|V\|(\phi):=\int_{G_k(X)} \phi(x)\, dV(x,S),\ \ \forall \phi\in C_c(X).
\end{equation*}
We say $V\in {\bf V}_k(X)$ is rectifiable if there exist a ${\mathcal H}^k$ measurable
countably $k$-rectifiable set $M\subset X$ and a locally ${\mathcal H}^k$ integrable
function $\theta$ defined on $M$ such that 
\begin{equation}
V(\phi)=\int_M \phi(x, {\rm Tan}_x M)\theta(x)\, d{\mathcal H}^k
\label{recvar}
\end{equation}
for $\phi\in C_c(G_k(X))$. Here ${\rm Tan}_x M$ is the approximate tangent
space of $M$ at $x$ which exists ${\mathcal H}^k$ a.e$.$ on $M$. 
Rectifiable $k$-varifold is uniquely determined by its weight measure
through the formula \eqref{recvar}. For this reason, we naturally say a 
Radon measure $\mu$ on $X$ is rectifiable if there exists a rectifiable
varifold such that the weight measure is equal to $\mu$. If in addition that $\theta\in {\mathbb N}$
${\mathcal H}^k$ a.e$.$ on $M$, we say $V$ is integral. The set of all rectifiable (resp. integral)
$k$-varifolds in $X$ is denoted by ${\bf RV}_k(X)$ (resp. ${\bf IV}_k(X)$). If $\theta=1$ 
${\mathcal H}^k$ a.e$.$ on $M$, we say $V$ is a unit density $k$-varifold. 

For $V\in {\bf V}_k(X)$ let $\delta V$ be the first variation of $V$, namely, 
\begin{equation}
\delta V(g):=\int_{G_k(X)} \nabla g(x)\cdot S\, dV(x,S)
\label{deffirst}
\end{equation}
for $g\in C^1_c(X;{\mathbb R}^n)$. If the total variation $\|\delta V\|$ of $\delta V$ is
locally bounded (note in the case of $X=\overline\Omega$, this means $\|\delta V\|(\overline\Omega)
<\infty$), we may apply the Radon-Nikodym theorem to $\delta V$ with respect to $\|V\|$.  
Writing the singular part of $\|\delta V\|$ with respect to $\|V\|$ as $\|\delta V\|_{\rm sing}$, 
we have $\|V\|$ measurable $h(V,\cdot)$, $\|\delta V\|$ measurable $\nu_{\rm sing}$ with $|\nu_{\rm sing}|
=1$ $\|\delta V\|$ a.e$.$, 
and a Borel set $Z\subset X$ such that $\|V\|(Z)=0$ with,
\begin{equation*}
\delta V(g)=-\int_{X} h(V,\cdot)\cdot g \, d\|V\|+\int_{Z} \nu_{\rm sing}\cdot g\, d\|\delta V\|_{\rm sing}
\end{equation*}
for all $g\in C_c^1(X;{\mathbb R}^n)$. 
We say $h(V,\cdot)$ is the generalized mean curvature vector of $V$, 
$\nu_{\rm sing}$ is the (outer-pointing) generalized
co-normal of $V$ and $Z$ is the generalized boundary of $V$. 
\subsection{Setting of the problem}
Suppose that $n\geq 2$ and 
\begin{equation}
\Omega\subset {\mathbb R}^n\ \ \mbox{is a 
bounded, strictly convex domain with smooth boundary $\partial \Omega$.}
\label{ocond1}
\end{equation}
Here the strict convexity means that the principal curvatures of $\partial \Omega$ are all positive. 
 Suppose
that $W:{\mathbb R}\rightarrow{\mathbb R}$ is a $C^3$ function with 
$W(\pm 1)=0$, $W(u)\geq 0$ for all $u\in {\mathbb R}$, 
\begin{equation}
 \text{for some}\ -1<\gamma<1,\ W'<0\ \text{on}\ (\gamma,1)\ 
  \text{and}\ W'>0\ \text{on}\ (-1,\gamma),
  \label{wcond1}
\end{equation}
\begin{equation}
 \text{for some}\ 0<\alpha<1\ \text{and}\ \kappa>0,\  
  W''(u)\geq\kappa\ \text{for all}\ \alpha\leq|u|\leq1.
  \label{wcond2}
\end{equation}
A typical example of such $W$ is $(1-u^2)^2/4$, for which we may
set $\gamma=0$, $\alpha=\sqrt{2/3}$ and $\kappa=1$. For a given
sequence of positive numbers $\{\e_i\}_{i=1}^{\infty}$ with $\lim_{i\rightarrow\infty}
\e_i=0$, suppose that $u_0^{\e_i}\in W^{1,2}(\Omega)$ satisfies
\begin{equation}
\|u_0^{\e_i}\|_{L^{\infty}(\Omega)}\leq 1
\label{absu}
\end{equation}
and
\begin{equation}
\sup_{i} E^{\e_i}[u_0^{\e_i}]\leq \Cl[c]{c-1}.
\label{engu}
\end{equation}
The condition \eqref{absu} may be dropped if we assume a suitable growth rate
upper bound on $W$ which is suitable for the existence of solution for \eqref{eq:1.1}. A typical example 
of sequence of $u^{\e_i}_0$ may be given as in \cite{Modica}. We include the detail for the convenience
of the reader. 
 Let $U\subset {\mathbb R}^n$
be any domain with $C^1$ boundary $M=\partial U$, and let $\Phi$ be a solution of ODE $\Phi''=W'(\Phi)$
with $\Phi(\pm \infty)=\pm 1$ and $\Phi(0)=0$. Note that such a solution exists uniquely, 
and $\Phi$ also satisfies $\Phi'=\sqrt{2W(\Phi)}$. 
Let $d$ be the signed distance function to $M$ so that it is 
positive inside of $U$. Define $u_0^{\e_i}(x):=\Phi(d(x)/\e_i)$ for $x\in \Omega$.
Then one can check that, using $\Phi'=\sqrt{2W(\Phi)}$ and $|\nabla d|=1$ a.e$.$,
\begin{equation}
E^{\e_i}[u_0^{\e_i}]=\int_{\Omega}\e_i^{-1} (\Phi')^2\, dx=\int_{\Omega} \e_i^{-1}\Phi' \sqrt{2W(\Phi)}
|\nabla d|\, dx.
\label{eq:1.2}
\end{equation}
By the co-area formula, then, 
\begin{equation}
E^{\e_i}[u_0^{\e_i}]=\int_{-\infty}^{\infty} \int_{\Omega\cap \{d=\e_i s\}} \Phi'(s)\sqrt{2W(\Phi(s))}\,d{\mathcal H^{n-1}}
ds.
\label{eq:1.3}
\end{equation}
If $M$ is transverse to $\partial \Omega$, ${\mathcal H}^{n-1}(\Omega\cap \{d=\e_i s\})
\approx {\mathcal H}^{n-1}(M\cap \Omega)$ for small $\e_i$ and \eqref{eq:1.3} shows
\begin{equation}
\lim_{i\rightarrow\infty}E^{\e_i}[u_0^{\e_i}]= \sigma {\mathcal H}^{n-1}(\Omega\cap M), \ \ \
\sigma:=\int_{-1}^1\sqrt{2W(u)}\, du.
\label{eq:1.4}
\end{equation}
Thus in this case, we may take $\Cr{c-1}=\sigma {\mathcal H}^{n-1}(M\cap \Omega)+1$, for example. 

We next solve the problem \eqref{eq:1.1} with $\e_i$ and $u_0^{\e_i}$ satisfying \eqref{absu} and \eqref{engu}.
By the standard parabolic existence and regularity theory, for each $i$, 
there exists a unique solution $u^{\e_i}$ with
\begin{equation}
u^{\e_i}\in L^2_{loc}([0,\infty);W^{2,2}(\Omega))\cap C^{\infty}(\overline{\Omega}\times(0,\infty)),
\ \ \ \partial_t u^{\e_i}\in L^2([0,\infty);L^2(\Omega)).
\label{ubel}
\end{equation}
By the maximum principle and \eqref{absu},
\begin{equation}
\sup_{x\in \overline\Omega,\ t>0}|u^{\e_i}(x,t)|\leq  1,
\label{absu2}
\end{equation}
and due to the gradient structure and \eqref{engu}, we also have
\begin{equation}
E^{\e_i}[u^{\e_i}(\cdot,T)]+\int_0^{T}\int_{\Omega}
\e_i\big(\Delta u^{\e_i}-\frac{W'}{\e_i^2}\big)^2\, dxdt= E^{\e_i}[u^{\e_i}(\cdot,0)]\leq  \Cr{c-1}
\label{engu2}
\end{equation}
for any $T>0$. 
Thus, for each $i$ through \eqref{defmu}, we have a family $\{\mu_t^{\e_i}\}_{t\in [0,\infty)}$
of uniformly bounded Radon measures. 
\subsection{Main results}\label{main}
The following sequence of theorems and definitions constitutes the main results of the present paper. 
\begin{theorem}
\label{theorem2.1}
Under the assumptions \eqref{ocond1}-\eqref{engu}, let $u^{\e_i}$ be
the solution of \eqref{eq:1.1}. Define $\mu_t^{\e_i}$ as in \eqref{defmu}. 
Then there exists a subsequence (denoted by
the same index) and a family of Radon measures $\{\mu_t\}_{t\geq 0}$
on $\overline\Omega$ such that
for all $t\geq 0$, $\mu_t^{\e_i}\rightharpoonup\mu_t$ as $i\rightarrow\infty$ 
on $\overline\Omega$. Moreover,
for a.e$.$ $t\geq 0$, $\mu_t$ is rectifiable on $\overline \Omega$.
\end{theorem}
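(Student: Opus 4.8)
The plan is to follow the standard scheme from Ilmanen's work, adapted to the bounded domain with Neumann conditions, using the energy bound \eqref{engu2} as the only input.

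First I would establish the time-parametrized compactness. For each $i$ and each $t\geq 0$, the total mass $\mu_t^{\e_i}(\overline\Omega)=E^{\e_i}[u^{\e_i}(\cdot,t)]\leq \Cr{c-1}$ by \eqref{engu2}, so $\{\mu_t^{\e_i}\}_i$ is a bounded sequence of Radon measures on the compact set $\overline\Omega$ for each fixed $t$. A diagonal argument over a countable dense set of times $t\in\mathbb{Q}^+$ yields a subsequence along which $\mu_t^{\e_i}\rightharpoonup\mu_t$ for all rational $t$. To upgrade this to \emph{all} $t\geq 0$, I would use an equicontinuity-in-time estimate: for $\phi\in C_c^2(\mathbb{R}^n)$ with $\partial\phi/\partial\nu=0$ on $\partial\Omega$ (or simply $\phi\in C^1(\overline\Omega)$ handled via reflection), the map $t\mapsto \mu_t^{\e_i}(\phi)$ has a derivative controlled by the energy identity — differentiating $\mu_t^{\e_i}(\phi)$ in $t$ and integrating the Allen--Cahn equation by parts (the boundary term vanishes by the Neumann condition) gives $|\frac{d}{dt}\mu_t^{\e_i}(\phi)|\leq C(\phi)(1+\int_\Omega \e_i(\Delta u^{\e_i}-W'/\e_i^2)^2\,dx)$ up to the discrepancy term, and the space-time integral of the right side is bounded by \eqref{engu2}. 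Hence $t\mapsto\mu_t^{\e_i}(\phi)$ is, uniformly in $i$, of bounded variation on compact time intervals with an $i$-uniform modulus, so the limit extends to all $t\geq 0$ by density of test functions and a standard argument. This gives the first assertion.

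Next, for the rectifiability of $\mu_t$ for a.e.\ $t$, the key is to show that for a.e.\ $t$, $\mu_t$ arises as the weight measure of an integral varifold, which follows from the interior theory of \cite{Tonegawa1} once one has (i) an $\e$-independent upper density ratio bound and (ii) the vanishing of the discrepancy measure $\xi_t^{\e_i}:=(\frac{\e_i}{2}|\nabla u^{\e_i}|^2-W(u^{\e_i})/\e_i)\,dx$ in the limit. Both of these, in the interior, rest on the Huisken/Ilmanen monotonicity formula; near the boundary they rest on the up-to-the-boundary monotonicity formula obtained by the reflection method announced in the introduction (Section 3). I would therefore invoke: the up-to-the-boundary monotonicity formula to get uniform bounds on $\mu_t^{\e_i}(B_r(x))/r^{n-1}$ for all $x\in\overline\Omega$; the discrepancy estimate (Section 4) together with its limiting vanishing (Section 6) to conclude $|\xi_t^{\e_i}|(\phi)\to 0$ for a.e.\ $t$; and then Tonegawa's rectifiability theorem, which says that a family of measures with locally bounded first variation, bounded density ratios, and vanishing discrepancy is rectifiable. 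The strict convexity of $\Omega$ enters precisely to make the reflection argument and the boundary monotonicity controllable.

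The main obstacle I expect is the boundary behavior — specifically, controlling the first variation and the density ratios uniformly up to and on $\partial\Omega$, and ensuring the reflected configuration still satisfies a usable (approximate) Allen--Cahn-type equation so that the monotonicity formula survives with only lower-order error terms. The reflection across a curved boundary introduces metric distortion and extra terms from the curvature of $\partial\Omega$; handling these is where strict convexity and the smoothness of $\partial\Omega$ are used, and it is the genuinely new part compared to the interior case. Once the boundary monotonicity formula and the boundary discrepancy vanishing are in place, the rectifiability of $\mu_t$ on all of $\overline\Omega$ for a.e.\ $t$ follows by essentially the same measure-theoretic argument as in the interior, applied on $\overline\Omega$ viewed as a compact space.
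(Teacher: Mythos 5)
Your outline follows the same architecture as the paper (diagonal argument plus a time-regularity estimate for convergence at every $t$; boundary monotonicity via reflection, uniform upper density ratios, vanishing discrepancy, and a rectifiability theorem for the a.e.\ $t$ statement), but two points in your execution deserve correction. First, for upgrading convergence from a dense set of times to \emph{all} $t$: a uniform-in-$i$ bound on the total variation of $t\mapsto\mu_t^{\e_i}(\phi)$ together with convergence at rational times does \emph{not} by itself give convergence at every $t$ --- uniformly BV sequences can oscillate at a fixed exceptional time (think of indicator functions jumping at points accumulating at $t$ from alternating sides). You either need a further Helly-type selection for each test function, or, as the paper does in Lemma \ref{lem:5.1}, you keep the sign information your own computation already contains: for $\phi\geq 0$ one has $\frac{d}{dt}\mu_t^{\e_i}(\phi)\leq \Cr{c-1}\|\phi\|_{C^2}$ (no Neumann condition on $\phi$ is needed, only on $u^{\e_i}$), so $\mu_t^{\e_i}(\phi)-\Cr{c-1}\|\phi\|_{C^2}\,t$ is monotone decreasing; the dense-set limit then extends continuously off a countable set of times, every subsequential limit at a continuity time is pinned down between the left and right limits, and the countable exceptional set is absorbed by one more diagonalization.

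Second, the reflection mechanism is not the one you describe: the paper never needs the reflected configuration to satisfy an approximate Allen--Cahn equation. It reflects the \emph{backward heat kernel}, not the solution, testing the unreflected equation against $\rho_1+\rho_2$ where $\rho_2(x,t)=\eta(\tilde x-y)\rho_{(y,s)}(\tilde x,t)$; the point is that $\nabla(\rho_1+\rho_2)\cdot\nu=0$ on $\partial\Omega$, which kills the boundary term in the integration by parts, while the curvature of $\partial\Omega$ enters only through the controllable error in Lemma \ref{lem:3.1}. Strict convexity is then used separately (via the Neumann condition and the second fundamental form) to get the one-sided discrepancy bound, and convexity gives $|\tilde x-y|\geq|x-y|$, which is needed to dominate the reflected kernel by the original one. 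Finally, rectifiability on $\overline\Omega$ (as opposed to integrality in $\Omega$) additionally requires the boundary energy estimate of Proposition \ref{Prop:3.3} to control the boundary term in the first variation formula for the associated varifolds, so that $\|\delta V_t\|(\overline\Omega)<\infty$ for a.e.\ $t$; the conclusion then comes from Allard's rectifiability theorem (together with $\mathcal H^{n-1}(\spt\mu_t)<\infty$ and the density bounds), not merely from the interior theory you cite.
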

Due to Theorem \ref{theorem2.1}, we may define rectifiable varifolds as follows.
\begin{definition}
For a.e$.$ $t\geq 0$, let $V_t\in {\bf RV}_{n-1}(\overline\Omega)$ be the unique rectifiable varifold such that
$\|V_t\|=\mu_t$ on $\overline\Omega$. For any $t$ such that $\mu_t$ is not
rectifiable, define $V_t\in {\bf V}_{n-1}(\overline\Omega)$ to be an arbitrary varifold with $\|V_t\|=\mu_t$ 
(for example $V_t(\phi):=\int_{\overline\Omega}\phi(\cdot,{\mathbb R}^{n-1}\times\{0\})\, d\mu_t$
for $\phi\in C(G_{n-1}(\overline\Omega))$). 
\end{definition}

\begin{theorem}
\label{theorem2.3} 
Let $V_t$ be defined as above. Then the following property holds. 
\begin{itemize}
\item[(1)] For a.e$.$ $t\geq 0$, $\sigma^{-1}V_t\lfloor_{\Omega}\in {\bf IV}_{n-1}(\Omega)$. 
\item[(2)] For a.e$.$ $t\geq 0$, $\|\delta V_t\|(\overline\Omega)<\infty$ and $\int_{0}^T \|\delta V_t\|
(\overline\Omega)\, dt<\infty$ for all $T>0$. 
\end{itemize}
\end{theorem}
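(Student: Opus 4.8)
The plan is to deduce both claims from the energy identity \eqref{engu2}, the vanishing of the discrepancy $\xi^{\e_i}_t:=\tfrac{\e_i}{2}|\nabla u^{\e_i}(\cdot,t)|^2-\tfrac{W(u^{\e_i}(\cdot,t))}{\e_i}$ established in Section 6, the boundary monotonicity formula of Section 3, and the interior integrality theory of \cite{MR1237490,Tonegawa1}. Claim~(1) is a statement local in the open set $\Omega$, where the Neumann condition is invisible: once the discrepancy vanishes in $\Omega$ and one has the interior monotonicity formula (classical) together with the first variation bound proved in Claim~(2), the hypotheses of the interior integrality theorem of \cite{Tonegawa1} (which extended \cite{MR1237490} to bounded domains) hold locally in $\Omega$, so $\sigma^{-1}V_t\lfloor_\Omega\in{\bf IV}_{n-1}(\Omega)$ for a.e$.$ $t\ge0$. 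The substance is therefore Claim~(2), and in particular the behaviour near $\partial\Omega$.

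For Claim~(2) I would first record the first variation identity for the diffuse varifold. Write $e^{\e_i}:=\tfrac{\e_i}{2}|\nabla u^{\e_i}|^2+\tfrac{W(u^{\e_i})}{\e_i}$ for the density of $\mu_t^{\e_i}$, set $\hat\nu^{\e_i}:=\nabla u^{\e_i}/|\nabla u^{\e_i}|$ (an arbitrary fixed unit vector where $\nabla u^{\e_i}=0$), and let $V_t^{\e_i}\in{\bf V}_{n-1}(\overline\Omega)$ be the varifold with weight $\mu_t^{\e_i}$ and direction $I-\hat\nu^{\e_i}\otimes\hat\nu^{\e_i}$. An integration by parts using \eqref{eq:1.1} and the Neumann condition $\partial u^{\e_i}/\partial\nu=0$ on $\partial\Omega$ --- which forces $\nabla u^{\e_i}$ to be tangent to $\partial\Omega$, so that the putative boundary term $\e_i(\partial u^{\e_i}/\partial\nu)(\nabla u^{\e_i}\cdot g)$ vanishes --- gives, for every $g\in C^1_c(\overline\Omega;\R^n)$,
\begin{equation*}
\delta V_t^{\e_i}(g)=\int_\Omega\e_i\,\partial_t u^{\e_i}\,(\nabla u^{\e_i}\cdot g)\,dx+\int_{\partial\Omega}e^{\e_i}\,(g\cdot\nu)\,d{\mathcal H}^{n-1}+\int_\Omega\xi^{\e_i}_t\,(\hat\nu^{\e_i}\otimes\hat\nu^{\e_i})\cdot\nabla g\,dx.
\end{equation*}
Testing with $g=\tilde\nu\in C^1(\overline\Omega;\R^n)$ an extension of the outer unit normal (so $g\cdot\nu\equiv1$ on $\partial\Omega$), and using the crude bound $|\delta V_t^{\e_i}(g)|\le C\|\nabla g\|_\infty\mu_t^{\e_i}(\Omega)$, the estimates $\mu_t^{\e_i}(\Omega)\le\Cr{c-1}$ and $\e_i|\nabla u^{\e_i}|^2\le2e^{\e_i}$, and Cauchy--Schwarz on the first term, one bounds the boundary energy:
\begin{equation*}
\int_{\partial\Omega}e^{\e_i}\,d{\mathcal H}^{n-1}\le C\Big(1+F_i(t)^{1/2}+\int_\Omega|\xi^{\e_i}_t|\,dx\Big),\qquad F_i(t):=\int_\Omega\e_i(\partial_t u^{\e_i})^2\,dx.
\end{equation*}
Feeding this back, $|\delta V_t^{\e_i}(g)|\le C\|g\|_\infty(1+F_i(t)^{1/2}+\int_\Omega|\xi^{\e_i}_t|\,dx)+C\|\nabla g\|_\infty\int_\Omega|\xi^{\e_i}_t|\,dx$ for all $g\in C^1_c(\overline\Omega;\R^n)$.

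By \eqref{eq:1.1} and \eqref{engu2}, $\int_0^TF_i(t)\,dt\le\Cr{c-1}$ for all $T$ and $i$, so by Fatou $\liminf_iF_i(t)<\infty$ for a.e$.$ $t$. Fix such a $t$ and take a (further, $t$-dependent) subsequence of the one in Theorem~\ref{theorem2.1} along which $F_i(t)\to\liminf_iF_i(t)$; along it $\mu_t^{\e_i}\rightharpoonup\mu_t$, and by Section 6 $\int_\Omega|\xi^{\e_i}_t|\,dx\to0$ (the vanishing holds up to and including a neighbourhood of $\partial\Omega$ via the reflection of Section 3). The standard diffuse-interface-to-varifold compactness --- vanishing discrepancy together with the $L^2$ control of the diffuse mean curvature that follows from $F_i(t)<\infty$, as in \cite{MR1237490} and its reflected counterpart near $\partial\Omega$ --- then gives $V_t^{\e_i}\rightharpoonup V_t$ as varifolds on $\overline\Omega$, hence $\delta V_t^{\e_i}(g)\to\delta V_t(g)$ for every $g\in C^1_c(\overline\Omega;\R^n)$. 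Letting $i\to\infty$ in the last inequality, the $\|\nabla g\|_\infty$ term disappears and $|\delta V_t(g)|\le C\|g\|_\infty(1+(\liminf_iF_i(t))^{1/2})$, so $\|\delta V_t\|(\overline\Omega)\le C(1+(\liminf_iF_i(t))^{1/2})<\infty$. Integrating in $t$ with Jensen's inequality, Fatou, and $\int_0^TF_i\le\Cr{c-1}$,
\begin{equation*}
\int_0^T\|\delta V_t\|(\overline\Omega)\,dt\le CT+C\Big(\int_0^T\liminf_iF_i(t)\,dt\Big)^{1/2}T^{1/2}\le CT+C\,\Cr{c-1}^{1/2}T^{1/2}<\infty,
\end{equation*}
which is Claim~(2).

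The only genuinely new difficulty relative to the interior case is localized at $\partial\Omega$: controlling the boundary energy $\int_{\partial\Omega}e^{\e_i}\,d{\mathcal H}^{n-1}$ and upgrading $\mu_t^{\e_i}\rightharpoonup\mu_t$ to varifold convergence up to $\partial\Omega$. Both rest on the boundary reflection underlying the monotonicity formula of Section 3: one reflects $u^{\e_i}$ across $\partial\Omega$ in Fermi coordinates (where the strict convexity in \eqref{ocond1} enters), the Neumann condition making the extension lie in $W^{2,2}$ and solve an Allen--Cahn equation with variable coefficients whose lower-order perturbations are $o(1)$ in the energy, and the interior estimates and interior discrepancy vanishing of Section 6 applied to the reflected solution supply exactly the boundary ingredients used above. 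Claim~(1) requires nothing new and reduces, by locality, to \cite{Tonegawa1}. The careful bookkeeping of the boundary term in the first variation identity and the transfer of the varifold-compactness machinery through the reflection are where I expect the real work to lie.
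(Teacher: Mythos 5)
Your proposal is correct and follows essentially the same route as the paper: the first variation identity for the diffuse varifold (the paper's \eqref{eq:rec0}), control of the boundary energy $\int_{\partial\Omega}e^{\e_i}\,d\mathcal{H}^{n-1}$, vanishing of the discrepancy, varifold compactness plus Allard rectifiability to identify the limit with $V_t$, and Cauchy--Schwarz/Fatou in time for the integrated bound; claim (1) is delegated to the interior theory exactly as the paper does. The one point where you diverge is the boundary energy: the paper proves a separate time-integrated estimate (Proposition \ref{Prop:3.3}, obtained by differentiating $\int\phi\,d\mu_t^{\e}$ with $\phi$ equal to the distance to $\partial\Omega$ near the boundary and integrating over $[t,t+1]$), then invokes Fatou to get a.e$.$ finiteness, whereas you test the first variation identity at fixed $t$ with an extension of $\nu$ and bound $\int_\Omega \e_i\partial_t u^{\e_i}(\nabla u^{\e_i}\cdot g)\,dx$ by Cauchy--Schwarz, obtaining the pointwise-in-time bound $\int_{\partial\Omega}e^{\e_i}\le C(1+F_i(t)^{1/2})$ directly; since both rest on the same integration by parts with a vector field whose normal trace is $\pm1$ on $\partial\Omega$, this is a cosmetic reorganization rather than a genuinely different argument, and both versions suffice.
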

We next define the tangential component of the first variation $\delta V_t$ on $\partial\Omega$. 
\begin{definition}
For a.e$.$ $t\geq 0$ such that $\|\delta V_t\|(\overline\Omega)<\infty$, define
\begin{equation}
\delta V_t\lfloor_{\partial\Omega}^{\top} (g):=\delta V_t\lfloor_{\partial\Omega}(g-(g\cdot\nu)\nu)
\ \ \ \mbox{for } g\in C(\partial \Omega;{\mathbb R}^n)
\label{deftan}
\end{equation}
where $\nu$ is the unit outward-pointing normal vector field on $\partial\Omega$. 
\end{definition}
We have the following absolute continuity result. 
\begin{theorem}
\label{theorem2.5}
For a.e$.$ $t\geq 0$, we have $\|\delta V_t\lfloor_{\partial\Omega}^{\top}+\delta V_t\lfloor_{\Omega}\|
\ll \|V_t\|$, and there exists $h_b=h_b(t)\in L^2(\|V_t\|)$ such that 
\begin{equation}
\delta V_t\lfloor_{\partial\Omega}^{\top}+\delta V_t\lfloor_{\Omega}=-h_b(t)\|V_t\|.
\label{hel2b}
\end{equation}
Moreover,
\begin{equation}
\label{hel2}
\int_0^{\infty} \int_{\overline\Omega} |h_b|^2\, d\|V_t\|dt\leq \Cr{c-1}.
\end{equation}
\end{theorem}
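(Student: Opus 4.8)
The plan is to obtain \eqref{hel2b} and \eqref{hel2} as a limit of the analogous quantity at the $\e_i$ level, exploiting the structure of the Allen--Cahn operator together with the already-established rectifiability (Theorem \ref{theorem2.1}) and first variation bounds (Theorems \ref{theorem2.3}, \ref{theorem2.5}). First I would record the integration-by-parts identity for the $\e_i$-approximation: for $g\in C^1(\overline\Omega;\R^n)$ with $g\cdot\nu=0$ on $\partial\Omega$, multiplying \eqref{eq:1.1} by $\nabla u^{\e_i}\cdot g$ and integrating over $\Omega$ (using the Neumann condition to kill the boundary term arising from $\Delta u^{\e_i}$, and noting that the transversality of $g$ to $\partial\Omega$ also kills the boundary term from integrating the energy-density transport term) one gets
\begin{equation*}
\int_{\Omega}\Big(\frac{\e_i}{2}|\nabla u^{\e_i}|^2+\frac{W(u^{\e_i})}{\e_i}\Big)\Div g\,dx-\int_\Omega \e_i\,\nabla u^{\e_i}\otimes\nabla u^{\e_i}\cdot\nabla g\,dx=-\int_\Omega \e_i\big(\Delta u^{\e_i}-\tfrac{W'}{\e_i^2}\big)(\nabla u^{\e_i}\cdot g)\,dx.
\end{equation*}
The left-hand side is $\delta V^{\e_i}_t(g)$ plus the discrepancy contribution $\int (\text{disc})\Div g$, where the discrepancy $\xi_t^{\e_i}:=\frac{\e_i}{2}|\nabla u^{\e_i}|^2-\frac{W(u^{\e_i})}{\e_i}$; here $V^{\e_i}_t$ is the natural diffused varifold with the projection onto $(\nabla u^{\e_i})^\perp$. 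The right-hand side is $-\int h^{\e_i}\cdot g\,d\mu^{\e_i}_t$ with $|h^{\e_i}|^2\,d\mu^{\e_i}_t\le \e_i(\Delta u^{\e_i}-W'/\e_i^2)^2\,dx$ (Cauchy--Schwarz using $\e_i|\nabla u^{\e_i}|^2\,dx\le d\mu^{\e_i}_t$), whose space-time integral is bounded by $\Cr{c-1}$ via \eqref{engu2}.

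Next I would pass to the limit. The discrepancy vanishes in the limit (this is the content of Section 6, which I may cite): for a.e.\ $t$, $|\xi_t^{\e_i}|\,dx\rightharpoonup 0$ as Radon measures on $\overline\Omega$. Fix such a $t$ at which additionally $\mu^{\e_i}_t\rightharpoonup\mu_t$ with $\mu_t$ rectifiable, $\|\delta V_t\|(\overline\Omega)<\infty$, and $\int h^{\e_i}\cdot g\,d\mu^{\e_i}_t$ converges along the subsequence. The key point is that for test vector fields $g\in C^1(\overline\Omega;\R^n)$ with $g\cdot\nu=0$ on $\partial\Omega$, the diffused first variation $\delta V^{\e_i}_t(g)$ converges to the \emph{full} first variation of $V_t$ tested against such $g$ — and since $g\cdot\nu=0$, testing $\delta V_t$ against $g$ picks out exactly $\delta V_t\lfloor_\Omega(g)+\delta V_t\lfloor_{\partial\Omega}^\top(g)$ (the normal part of $g$ on $\partial\Omega$ is zero, so only the tangential part of the boundary first variation survives; on $\Omega$ there is no such restriction). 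Combined with the discrepancy vanishing and a weak-$L^2$ limit $h^{\e_i}\,\mu_t^{\e_i}\rightharpoonup h_b\,\mu_t$ (the uniform $L^2$ bound from \eqref{engu2} gives, via Banach--Alaoglu after extracting a further subsequence and a lower-semicontinuity/Fatou argument in time, a limit $h_b\in L^2(\|V_t\|)$ with the displayed space-time bound $\le \Cr{c-1}$), we obtain
\begin{equation*}
\delta V_t\lfloor_{\partial\Omega}^{\top}(g)+\delta V_t\lfloor_{\Omega}(g)=-\int_{\overline\Omega} h_b\cdot g\,d\|V_t\|
\end{equation*}
for all $g\in C^1(\overline\Omega;\R^n)$ with $g\cdot\nu=0$ on $\partial\Omega$. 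Since such $g$ separate points in the relevant sense — for the $\Omega$-interior part any $g\in C^1_c(\Omega;\R^n)$ is admissible, and on $\partial\Omega$ any tangential field extends to an admissible $g$ — this identifies $\delta V_t\lfloor_{\partial\Omega}^{\top}+\delta V_t\lfloor_{\Omega}$ as the vector-valued measure $-h_b\|V_t\|$, which is $\ll\|V_t\|$ with density in $L^2(\|V_t\|)$, proving \eqref{hel2b}, and \eqref{hel2} follows by integrating the $L^2$ bound in $t$ and using \eqref{engu2}.

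The main obstacle I anticipate is the convergence of the diffused first variation $\delta V^{\e_i}_t(g)\to \delta V_t\lfloor_\Omega(g)+\delta V_t\lfloor^\top_{\partial\Omega}(g)$ for transversal $g$, together with the correct bookkeeping on $\partial\Omega$. Away from $\partial\Omega$ this is standard once the discrepancy vanishes (the diffused varifolds converge as varifolds to $V_t$ on $\Omega$, cf.\ \cite{MR1237490,Tonegawa1}), but near the boundary one must control the measures $\e_i\nabla u^{\e_i}\otimes\nabla u^{\e_i}\,dx$ up to $\partial\Omega$, show that their limit is $\int S\,dV_t$ restricted appropriately, and verify that the boundary portion of the limiting first variation, when tested against $g$ with $g\cdot\nu=0$, reproduces exactly $\delta V_t\lfloor^\top_{\partial\Omega}(g)$ with no spurious normal contribution. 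This is precisely where the boundary reflection/monotonicity machinery of Section 3 and the convexity of $\Omega$ enter: the reflection argument should give that the (reflected, doubled) diffused varifolds have locally bounded first variation up to $\partial\Omega$ uniformly in $i$, so that a subsequence converges as varifolds on a neighborhood of $\overline\Omega$ and the constrained test fields see only the claimed combination. A secondary technical point is the $t$-measurability and the extraction of a single subsequence along which all of the above holds for a.e.\ $t$ simultaneously; this is handled by a diagonal argument over a countable dense set of test fields combined with the dominated-in-$t$ bound \eqref{engu2}, exactly as in the interior theory.
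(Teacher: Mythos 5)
Your proposal is correct and follows essentially the same route as the paper's own proof (Propositions \ref{lemma7.35} and \ref{lemma7.4}): the first variation identity \eqref{eq:rec0} for the diffuse varifold, Cauchy--Schwarz against the dissipation term controlled by \eqref{engu2}, the vanishing of the discrepancy, and the observation that $\delta V_t(g)=\delta V_t\lfloor_{\Omega}(g)+\delta V_t\lfloor_{\partial\Omega}^{\top}(g)$ whenever $g\cdot\nu=0$ on $\partial\Omega$. The only step you compress — passing from tangential test fields to arbitrary $g\in C^1(\overline\Omega;\R^n)$ so as to read off absolute continuity of the full vector measure — is handled in the paper by replacing $g$ with $\tilde g=g-(\nu^{\epsilon}\cdot g)\nu^{\epsilon}$ and sending $\epsilon\to0$, using the separately established interior absolute continuity to discard $\delta V_t\lfloor_{\Omega}(g-\tilde g)$ (and note that your pointwise inequality $\e_i|\nabla u^{\e_i}|^2\,dx\le d\mu_t^{\e_i}$ is equivalent to $\xi_t^{\e_i}\le 0$, which holds only in the limit, not pointwise — harmless here since you invoke the discrepancy vanishing anyway).
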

Note that $h_b=h(V_t,\cdot)$ in $\Omega$. Finally, using the above quantities, we have
\begin{theorem}
\label{theorem2.6}
For $\phi \in C^1(\overline\Omega\times[0,\infty)\,;\,\R^+)$ 
with $\nabla \phi(\cdot,t)\cdot\nu=0$ on $\partial\Omega$ and for any $0\leq t_1<t_2<\infty$,
we have
\begin{equation}
\label{mainineq}
\int_{\overline\Omega} \phi(\cdot,t)\, d\|V_t\|\Big|_{t=t_1}^{t_2}\leq
\int_{t_1}^{t_2}\int_{\overline\Omega} \big(-\phi |h_b|^2+\nabla\phi\cdot h_b
+\partial_t\phi\big)\, d\|V_t\|dt.
\end{equation}
\end{theorem}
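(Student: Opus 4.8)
The plan is to pass to the limit in the first variation identity satisfied by the diffuse measures $\mu^{\e_i}_t$, weighted against $\phi$, using the key facts already assembled: the rectifiability of $\mu_t$ on $\overline\Omega$ (Theorem \ref{theorem2.1}), the integrality and bounded first variation (Theorem \ref{theorem2.3}), the $L^2$ curvature $h_b$ with the absolute-continuity relation \eqref{hel2b} (Theorem \ref{theorem2.5}), and — crucially — the vanishing of the discrepancy in the limit (Section 6). The starting point is the $\e$-level computation: testing \eqref{eq:1.1} against $\nabla\phi\cdot\nabla u^{\e_i}$ (or equivalently differentiating $\int\phi\,d\mu^{\e_i}_t$ in time) yields, after integration by parts and using the Neumann condition $\partial u^{\e_i}/\partial\nu=0$ together with $\nabla\phi\cdot\nu=0$ on $\partial\Omega$,
\begin{equation*}
\frac{d}{dt}\int_{\overline\Omega}\phi\,d\mu^{\e_i}_t
=\int_{\overline\Omega}\Big(-\phi\,\e_i\Big(\Delta u^{\e_i}-\tfrac{W'}{\e_i^2}\Big)^2
+\nabla\phi\cdot\nabla u^{\e_i}\,\e_i\Big(\Delta u^{\e_i}-\tfrac{W'}{\e_i^2}\Big)
+\partial_t\phi\Big)\,dx + R^{\e_i}(t),
\end{equation*}
where $R^{\e_i}(t)$ collects the discrepancy terms $\xi^{\e_i}:=\tfrac{\e_i}{2}|\nabla u^{\e_i}|^2-\tfrac{W(u^{\e_i})}{\e_i}$ against $\nabla^2\phi$ and $\nabla\phi$. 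The essential structural point is that the boundary term from integration by parts is $\int_{\partial\Omega}\phi\,\e_i\,\partial_t u^{\e_i}\,\partial_\nu u^{\e_i}$-type expressions that vanish identically because of the two Neumann-type conditions; this is precisely why the hypothesis $\nabla\phi\cdot\nu=0$ on $\partial\Omega$ is imposed, and it is what allows the argument to proceed without producing an uncontrolled boundary contribution of the wrong sign.

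Next I would integrate this identity over $[t_1,t_2]$ and pass $i\to\infty$ term by term. The left-hand side converges to $\int_{\overline\Omega}\phi(\cdot,t)\,d\|V_t\|$ at the endpoints by the pointwise-in-$t$ convergence $\mu^{\e_i}_t\rightharpoonup\mu_t$ from Theorem \ref{theorem2.1}. The time-derivative term $\partial_t\phi$ passes to the limit by dominated convergence once we know $\int_{t_1}^{t_2}\mu^{\e_i}_t(\overline\Omega)\,dt$ is bounded, which follows from \eqref{engu2}. The remainder $R^{\e_i}(t)\to 0$ because the discrepancy measures $|\xi^{\e_i}|\,\mathcal L^n\to 0$ (this is the conclusion of Section 6, valid up to the boundary thanks to the reflected monotonicity formula of Section 3); one should integrate in time and again dominate using the uniform energy bound. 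For the curvature terms one identifies $v^{\e_i}:=-\e_i(\Delta u^{\e_i}-W'/\e_i^2)^{-1}$... more precisely, one writes the diffuse mean-curvature vector as $h^{\e_i}:=-\big(\Delta u^{\e_i}-W'/\e_i^2\big)\tfrac{\nabla u^{\e_i}}{|\nabla u^{\e_i}|}$ where $\nabla u^{\e_i}\neq 0$, so that $\nabla\phi\cdot\nabla u^{\e_i}\,\e_i(\Delta u^{\e_i}-W'/\e_i^2)=-\nabla\phi\cdot h^{\e_i}\,\e_i|\nabla u^{\e_i}|$; since $\e_i|\nabla u^{\e_i}|^2\,\mathcal L^n$ and $\mu^{\e_i}_t$ differ only by the discrepancy, the measures $\e_i|\nabla u^{\e_i}|\,\mathcal L^n$ are (asymptotically) comparable to $\mu^{\e_i}_t$, and the $L^2$ bound \eqref{engu2} on $\e_i(\Delta u^{\e_i}-W'/\e_i^2)$ gives weak-$L^2$ compactness of $h^{\e_i}$ against these measures.

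The main obstacle is the lower-semicontinuity and identification of the curvature limit, carried out jointly in $(x,t)$. One must show $\liminf_i\int_{t_1}^{t_2}\!\!\int\phi\,\e_i(\Delta u^{\e_i}-W'/\e_i^2)^2\,dxdt\geq\int_{t_1}^{t_2}\!\!\int\phi\,|h_b|^2\,d\|V_t\|dt$ and, simultaneously, that the weak limit of the linear term $\nabla\phi\cdot\nabla u^{\e_i}\,\e_i(\Delta u^{\e_i}-W/\e_i^2)$ against $dt$ is $-\nabla\phi\cdot h_b\,d\|V_t\|dt$ with the \emph{same} $h_b$ appearing in both; this forces us to use the space-time varifold structure and the fact (from the earlier theorems) that $h_b\in L^2(\|V_t\|dt)$ is characterized via \eqref{hel2b} as the density of the first variation, including the tangential boundary part $\delta V_t\lfloor^{\top}_{\partial\Omega}$. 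Concretely one tests $\delta V^{\e_i}_t$ against a vector field $g$ with $g\cdot\nu=0$ on $\partial\Omega$, shows $\delta V^{\e_i}_t(g)\to\delta V_t(g)$ (the boundary contribution again dropping out by $g\cdot\nu=0$), and matches this against the diffuse curvature integral; the discrepancy vanishing is what makes $\delta V^{\e_i}_t(g)+\int g\cdot h^{\e_i}\e_i|\nabla u^{\e_i}|\to 0$. Once the two curvature limits are pinned to the common $h_b$, combining the lower-semicontinuity on the quadratic term with the convergence on the remaining (linear and time) terms yields \eqref{mainineq}. A minor technical care is needed to handle the null set of times $t$ where $\mu_t$ fails to be rectifiable or $\|\delta V_t\|(\overline\Omega)=\infty$: these form a set of measure zero by Theorems \ref{theorem2.1}–\ref{theorem2.3}, so they do not affect the time integral, and the endpoint values $t_1,t_2$ may be taken in the good set by a limiting argument using the continuity $t\mapsto\mu^{\e_i}_t(\phi)$ and Fatou.
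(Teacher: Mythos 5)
Your proposal follows essentially the same route as the paper: the $\e$-level energy identity, integration over $[t_1,t_2]$, vanishing of the discrepancy, lower semicontinuity of $\int\phi\,\e_i(\Delta u^{\e_i}-W'/\e_i^2)^2$ against $\int\phi|h_b|^2\,d\|V_t\|$, and identification of the linear term's limit as $-\int\nabla\phi\cdot h_b\,d\|V_t\|$ by testing the first variation with $g=\nabla\phi$ (which is where $\nabla\phi\cdot\nu=0$ is actually used — the basic identity needs only the Neumann condition on $u^{\e_i}$ and carries no discrepancy remainder, contrary to your $R^{\e_i}(t)$; the discrepancy enters only when matching the first-variation terms to $h_b$). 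The one step you gloss over is the interchange of $\liminf_i$ with the time integral for the quadratic-minus-linear combination, which is not sign-definite: the paper handles this by completing the square as in Lemma \ref{lem:5.1} to get the uniform pointwise-in-$t$ lower bound
\begin{equation*}
\int_{\Omega}\Big(\tfrac{1}{\e_i}(f^{\e_i})^2\phi-f^{\e_i}\nabla\phi\cdot\nabla u^{\e_i}\Big)\,dx\;\geq\;-C\,\|\phi\|_{C^2},
\end{equation*}
with $C$ depending only on the initial energy, and then invokes Fatou, after which the slicewise results \eqref{eq:rec9} and \eqref{eq:rec12} apply for a.e.\ $t$. Your alternative of treating the two terms jointly in space--time would also work (Fatou for the nonnegative quadratic term, uniform integrability in $t$ from the energy bound for the linear term), but as written this step needs to be supplied.
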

If $\phi(\cdot,t)$ has a compact support in $\Omega$, \eqref{mainineq} is Brakke's inequality
\cite{MR0485012} in an integral form. If we have a situation that $\|V_t\|(\partial \Omega)=0$,
then Theorem \ref{theorem2.5} shows $\delta V_{t}\lfloor_{\partial\Omega}^{\top}=0$
and $\delta V_t \lfloor_{\partial\Omega}$ is singular with respect to $\|V_t\|$.
It is parallel to $\nu$ for $\|\delta V_t\|$ a.e$.$ which would, if ${\rm spt}\,\|V_t\|$ is smooth
up to the boundary, correspond to 90 degree angle of intersection. 
The reader is referred to Section \ref{finalremark} for further
remarks on the above formulation. 
\section{Boundary monotonicity formula}
\label{sec:3}
The first task of our problem is to establish some up-to the boundary
monotonicity formula of Huisken/Ilmanen type. 
Define $\Cl[c]{c-2}$ by
\begin{equation*}
  \Cr{c-2}
 :=(\|\mbox{principal curvatures of $\partial \Omega$}\|_{L^{\infty}(\partial \Omega)})^{-1}.
\end{equation*}
Since $\partial\Omega$ is assumed to be smooth and compact, $0<\Cr{c-2}<\infty$. 
For $r\leq \Cr{c-2}$, let us denote by $N_r$ the interior tubular neighborhood of $\partial\Omega$, namely
\begin{equation*}
  N_r:=\{x-\lambda\nu(x):x\in\partial\Omega,\ 0\leq \lambda<r\},
\end{equation*}
where $\nu$ is the unit outer-pointing normal vector field to $\partial \Omega$.
For $x\in N_{\Cr{c-2}}$, there exists a unique point
$\zeta(x)\in\partial\Omega$ such that
$\dist(x,\partial\Omega)=|x-\zeta(x)|$. We define the reflection point
$\tilde{x}$ of $x$ with respect to $\partial\Omega$ as
$\tilde{x}:=2\zeta(x)-x$ (see Figure \ref{Fig:3.1}).  We also fix a
radially symmetric function $\eta\in C^{\infty}({\mathbb R}^n)$ such
that
\begin{equation}
0\leq \eta\leq 1,\ \ \frac{\partial\eta}{\partial r}\leq 0, \ \  {\rm
 spt}\, \eta\subset B_{\Cr{c-2}/2},\ \ \eta=1 \mbox{ on }B_{\Cr{c-2}/4}.
\label{etadef}
\end{equation}
For $s>t>0$ and $x,y\in N_{\Cr{c-2}}$, we define the $(n-1)$-dimensional
reflected backward heat kernel denoted by 
$\tilde{\rho}_{(y,s)}(x,t)$ as
\begin{equation}
  \tilde\rho_{(y,s)}(x,t)
  :=\rho_{(y,s)}({\tilde x},t),
 \label{refheat}
\end{equation}
where $\rho_{(y,s)}$ is defined as in \eqref{defrho}. For 
$x,y\in N_{\Cr{c-2}}$, we define truncated versions of $\rho_{(y,s)}$ and 
$\tilde\rho_{(y,s)}$ as
\begin{equation}
\rho_1=\rho_1(x,t)=\eta(x-y)\rho_{(y,s)}(x,t) \ \ \mbox{ and } \ \ \
\rho_2=\rho_2(x,t)=\eta(\tilde{x}-y)\tilde\rho_{(y,s)}(x,t). 
\label{refheat2}
\end{equation}
For 
$x\in N_{\Cr{c-2}}\setminus N_{\Cr{c-2}/2}$ and $y\in N_{\Cr{c-2}/2}$, 
we have $|{\tilde x}-y|>\Cr{c-2}/2$. Thus we may smoothly define $\rho_2=0$
for $x\in \Omega\setminus N_{\Cr{c-2}/2}$ and $y\in N_{\Cr{c-2}/2}$. 
We also define a (signed) measure
\begin{equation}
 d\xi_t^\varepsilon
 =\left(\frac\varepsilon2|\nabla
 u^\varepsilon|^2-\frac{W(u^\varepsilon)}{\varepsilon}\right)\,dx
 \label{defxi}
\end{equation}
where the right-hand side is evaluated at time $t$. 

\begin{figure}
 \begin{center}
  \includegraphics[height=100pt]{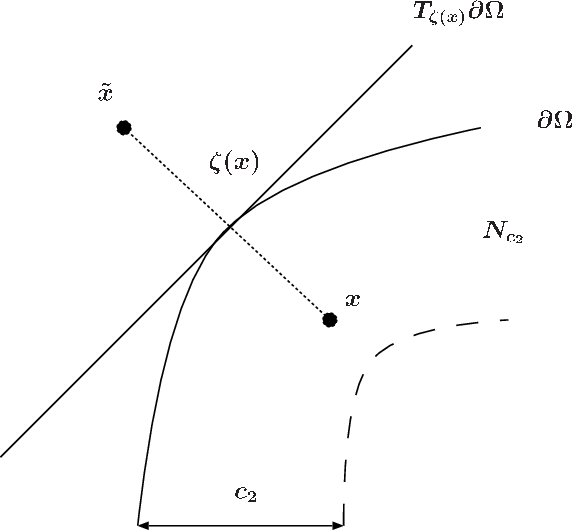}
 \end{center}
 \caption{The interior tubular neighbourhood and the reflection point
 $\tilde{x}$}
  \label{Fig:3.1}
\end{figure}

\begin{proposition}
 [Boundary monotonicity formula]
 \label{Prop:3.1}
 There exist $0<\Cl[c]{c-3},\Cl[c]{c-4}<\infty$ depending only on $n$,
 $\Cr{c-1}$ and $\Cr{c-2}$ such that
 \begin{equation}
  \frac{d}{dt}
  \big(e^{\Cr{c-3}(s-t)^{\frac14}}
  \int_{\Omega}  (
 \rho_1+\rho_2)
  \,d\mu^\varepsilon_t(x) \big)\leq 
 e^{\Cr{c-3}(s-t)^{\frac14}}\big(\Cr{c-4}
+
  \int_{\Omega}
  \frac{\rho_1+\rho_2}
  {2(s-t)}\,d\xi_t^\varepsilon(x)\big)
  \label{eq:3.7}
  \end{equation}
 for $s>t>0$ and $y\in N_{\Cr{c-2}/2}$. For $s>t>0$ and $y\in \Omega\setminus N_{\Cr{c-2}/2}$,
 we have
 \begin{equation}
 \frac{d}{dt}\int_{\Omega} \rho_1\, d\mu_t^{\e}(x)\leq 
 \Cr{c-4}+\int_{\Omega}\frac{\rho_1}{2(s-t)}\, d\xi_t^{\e}(x).
 \label{eq:3.75}
 \end{equation}
\end{proposition}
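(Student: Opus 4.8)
The plan is to mimic the classical interior Huisken/Ilmanen monotonicity computation, but run it simultaneously on the original function $u^\e$ and its ``reflection'' across $\partial\Omega$, using the reflected kernel $\tilde\rho$ to compensate for boundary terms. For the second, easier inequality \eqref{eq:3.75}: when $y\in\Omega\setminus N_{\Cr{c-2}/2}$, the cutoff $\eta(x-y)$ vanishes in a neighborhood of $\partial\Omega$, so $\rho_1$ and all its spatial derivatives vanish on $\partial\Omega$. One then differentiates $\int_\Omega \rho_1\,d\mu_t^\e$ in $t$, integrates by parts (no boundary terms appear because $\rho_1\equiv 0$ near $\partial\Omega$ and the Neumann condition on $u^\e$ is irrelevant), and uses the equation $\partial_t u^\e=\Delta u^\e-W'/\e^2$ together with the backward heat equation $\partial_t\rho_{(y,s)}+\Delta\rho_{(y,s)}+\frac{1}{2(s-t)}\rho_{(y,s)}=0$ (for the $(n-1)$-dimensional kernel, the ``extra'' term coming from the exponent $\frac{n-1}{2}$ instead of $\frac{n}{2}$). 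Collecting terms, the Huisken-type computation produces the discrepancy term $\int_\Omega\frac{\rho_1}{2(s-t)}\,d\xi_t^\e$ plus an error controlled by $\|\nabla\eta\|$, the energy bound \eqref{engu2}, and the lower bound $|x-y|\geq \Cr{c-2}/4$ on $\spt\nabla\eta$; this error is what gets absorbed into $\Cr{c-4}$. This is essentially \cite{MR1237490,Tonegawa1} verbatim.

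For the main inequality \eqref{eq:3.7} with $y\in N_{\Cr{c-2}/2}$: the key new point is that $\rho_1+\rho_2$ does \emph{not} vanish near $\partial\Omega$, so integration by parts of $\int_\Omega(\rho_1+\rho_2)\big(\Delta u^\e-W'/\e^2)\partial_t u^\e\cdots$ produces genuine boundary integrals over $\partial\Omega$. The idea is that the reflection is designed so that on $\partial\Omega$ one has $\tilde x=x$, hence $\rho_{(y,s)}(x,t)=\tilde\rho_{(y,s)}(x,t)$ there, and the normal derivatives $\partial_\nu\rho_1$ and $\partial_\nu\rho_2$ are (to leading order) negatives of each other, so that the boundary terms from $\rho_1$ and $\rho_2$ cancel. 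The cancellation is only approximate because $\partial\Omega$ is curved: the map $x\mapsto\tilde x$ is not an isometry, its Jacobian differs from the identity by an amount controlled by the principal curvatures of $\partial\Omega$, i.e.\ by $\Cr{c-2}^{-1}$. This is exactly why the convexity hypothesis \eqref{ocond1} enters and why $\tilde\rho$ is merely a near-solution of the backward heat equation: $\partial_t\tilde\rho+\Delta\tilde\rho+\frac{1}{2(s-t)}\tilde\rho$ is not zero but is bounded by $\Cr{c-3}(s-t)^{-3/4}\tilde\rho$ (or similar), which after multiplication by $\mu_t^\e$ and use of \eqref{engu2} integrates in $t$ to give the exponential correction factor $e^{\Cr{c-3}(s-t)^{1/4}}$ and the additive constant $\Cr{c-4}$.

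Concretely, the steps I would carry out are: (i) record the exact PDEs satisfied by $\rho_1$ and (approximately) by $\rho_2$, bounding the ``error'' in the heat equation for $\tilde\rho$ in terms of $\dist(x,\partial\Omega)$ and $\Cr{c-2}^{-1}$ using the formula $\tilde x=2\zeta(x)-x$ and standard tubular-neighborhood estimates (Jacobian of $\zeta$, second fundamental form); (ii) compute $\frac{d}{dt}\int_\Omega(\rho_1+\rho_2)\,d\mu_t^\e$, substituting the Allen--Cahn equation and integrating by parts in $x$; (iii) show the $\partial\Omega$-boundary terms produced in (ii) cancel up to an error of order $\Cr{c-2}^{-1}$ times a bounded quantity --- here one uses $\partial_\nu u^\e=0$, the reflection symmetry $\rho_1=\rho_2$ on $\partial\Omega$, and $\partial_\nu(\rho_1+\rho_2)=O(\Cr{c-2}^{-1})(\rho_1+\rho_2)$ on $\partial\Omega$; (iv) organize the interior terms into the ``Huisken square'' (which has a sign) plus the discrepancy term $\frac{\rho_1+\rho_2}{2(s-t)}\,d\xi_t^\e$ plus lower-order errors; (v) absorb all errors via the energy bound \eqref{engu2} and Gronwall-type manipulation to extract $e^{\Cr{c-3}(s-t)^{1/4}}$ and $\Cr{c-4}$. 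The main obstacle is step (iii), controlling the boundary-term cancellation: one must estimate $\partial_\nu\tilde\rho$ on $\partial\Omega$, which requires carefully differentiating through the reflection map and the curved geometry of $\partial\Omega$, and it is precisely there that strict convexity is used to keep the sign of the residual favorable (or at least its size controlled). Steps (i)--(ii), (iv)--(v) are close to the established interior arguments and I would import them with minor bookkeeping changes for the $(n-1)$-dimensional kernel and the extra cutoff $\eta$.
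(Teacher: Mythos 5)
Your overall architecture (reflected kernel, Huisken-type computation applied to $\rho_1+\rho_2$, interior error from the curvature of the reflection absorbed into the factor $e^{\Cr{c-3}(s-t)^{1/4}}$) is the right one, but the step you yourself flag as the main obstacle --- step (iii), the boundary terms --- is misdiagnosed, and the fallback you describe would not close the argument. The reflection is designed so that the boundary terms cancel \emph{exactly}, not approximately: on $\partial\Omega$ one has $\tilde x=x$ and $\nabla\tilde x=I-2\nu\otimes\nu$, hence $\nabla|\tilde x-y|^2\cdot\nu=-2\nu\cdot(x-y)=-\nabla|x-y|^2\cdot\nu$, and the same reflection identity applies to the radial cutoff $\eta$; therefore $\nabla(\rho_1+\rho_2)\cdot\nu\equiv 0$ on $\partial\Omega$, and the boundary integral $\int_{\partial\Omega}\nabla(\rho_1+\rho_2)\cdot\nu\,\bigl(\tfrac{\e}{2}|\nabla u^\e|^2+\tfrac{W(u^\e)}{\e}\bigr)\,d{\mathcal H}^{n-1}$ produced by the second integration by parts vanishes identically. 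This exactness is essential. A residual of size $O(\Cr{c-2}^{-1})(\rho_1+\rho_2)$ on $\partial\Omega$, as you propose, would leave a term $C\int_{\partial\Omega}(\rho_1+\rho_2)\,\bigl(\tfrac{\e}{2}|\nabla u^\e|^2+\tfrac{W(u^\e)}{\e}\bigr)\,d{\mathcal H}^{n-1}$: the boundary energy density is not controlled pointwise in $t$ (only its time integral is, via Proposition \ref{Prop:3.3}), and the kernel $\rho_1+\rho_2$ is as large as $(4\pi(s-t))^{-(n-1)/2}$ on $\partial\Omega$ when $y$ is near the boundary and $t\uparrow s$; such a term cannot be absorbed into a constant $\Cr{c-4}$ independent of $\e$ and $t$, which is what the stated differential inequality requires. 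So the exact first-order cancellation is the heart of the proposition and must be verified, not estimated.

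Two further corrections. First, the curvature of $\partial\Omega$ enters not through the boundary term but in the interior, via the second derivatives of $\tilde\rho$: $\nabla^2|\tilde x-y|^2$ contains derivatives of $\nu\otimes\nu$, so $\tilde\rho$ violates the Huisken identity \eqref{eq:3.3} by the term computed in Lemma \ref{lem:3.1}, which is bounded by $C|\tilde x-y|(s-t)^{-1}\rho_2$. To reach the exponent in $e^{\Cr{c-3}(s-t)^{1/4}}$ one must split into $\{|\tilde x-y|\le (s-t)^{1/4}\}$ and its complement, obtaining $C(s-t)^{-3/4}\int_\Omega\rho_2\,d\mu_t^\e+C$; your step (i) asserts a $(s-t)^{-3/4}\tilde\rho$ bound directly, which skips this splitting (the pointwise bound on the error is $|\tilde x - y|(s-t)^{-1}\tilde\rho$, not $(s-t)^{-3/4}\tilde\rho$). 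Second, strict convexity plays no role in Proposition \ref{Prop:3.1}; it is used only from Section \ref{sec:4} onward. No sign condition on any boundary residual is needed here, because that residual is zero.
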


Above monotonicity formula is an analogue of Ilmanen's monotonicity
formula in ${\mathbb R}^n$ without boundary \cite{MR1237490}, which is the `Allen-Cahn equation version'
of Huisken's monotonicity formula for the MCF
\cite{MR1030675}.
Huisken's monotonicity formula for the MCF with the
90 degree angle boundary condition is derived by Stahl~\cite{MR1393271,MR1402731}, Buckland~\cite{MR2180601} and
Koeller~\cite{MR2886118}.
For stationary case of \eqref{eq:1.1}, the second author derived a
boundary monotonicity formula using the reflection argument \cite{MR1970021}, and just as in the
case of MCF, it is a `diffuse interface version' of a boundary monotonicity formula
for stationary varifold derived by 
Gr\"uter-Jost \cite{MR0863638}.

To derive Huisken's as well as Ilmanen's monotonicity formula, 
\begin{equation}
 \label{eq:3.3}
 \frac{(a\cdot\nabla\rho)^2}{\rho}+\left((I-a\otimes a)\cdot
 \nabla^2\rho\right)
 +\partial_t\rho=0
\end{equation}
is the crucial identity.  Here, $\rho=\rho_{(y,s)}(x,t)$ and $a=(a_j)$
is any unit vector. Before proving the boundary monotonicity formula, we
derive a similar identity for the reflected backward heat kernel
$\tilde{\rho}_{(y,s)}$.

\begin{lemma}
 \label{lem:3.1}
 For $a$ with $|a|=1$ and $\tilde\rho=\tilde\rho_{(y,s)}(x,t)$, we have
 \begin{equation}
   \label{eq:3.4}
    \frac{(a\cdot\nabla\tilde{\rho})^2}{\tilde{\rho}}
   +((I-a\otimes a)\cdot\nabla^2\tilde{\rho})
   +\partial_t\tilde{\rho}
   \leq C\left(
	  \frac{|\tilde{x}-y|}{s-t}
	  +\frac{|\tilde{x}-y|^3}{(s-t)^2} 
	 \right)
   \tilde{\rho}
 \end{equation}
for $0<t<s$ and $x,y\in N_{\Cr{c-2}}$ where
$C>0$ is some constant.
\end{lemma}

To prove Lemma \ref{lem:3.1}, we use the following lemma.
\begin{lemma}
 [cf.\cite{MR0397520,MR0863638}]
 Let 
 \[
 Q(x):=\nabla\zeta(x)-(I-\nu\otimes\nu),
 \]
 where $\nu$ is the unit normal vector at
 $\zeta(x)\in\partial\Omega$. Then
 \begin{enumerate}
  \item $Q(x)$ is symmetric;
  \item $Q(x)\nu=0$ for all $x\in N_{c_2/2}$;
  \item $|Q(x)|\leq 2|x-\zeta(x)|$ for all $x\in N_{c_2/2}$;
  \item If $\partial\Omega\in C^3$, then $|\nabla Q|$ is bounded.
 \end{enumerate}
\end{lemma}

For $x,y\in N_{c_2/2}$ by convexity
\[
 |x-\zeta(x)|
 =\frac12|x-\tilde{x}|
 \leq\frac12(|x-y|+|y-\tilde{x}|)
 \leq|\tilde{x}-y|,
\]
thus $|Q(x)|\leq 2|\tilde{x}-y|$.

\begin{proof}%
 [Proof of Lemma \ref{lem:3.1}]
 Since
 $\nabla \zeta(x)=I-\nu\otimes \nu+Q(x)$ and $\tilde x=2\zeta(x)-x$,
 we have 
 \begin{equation}
 \label{eq:3.4s}
  \begin{split}
   \nabla|\tilde{x}-y|^2&=2(I-2\nu\otimes\nu+2Q(x))(\tilde{x}-y), \\
   |\nabla|\tilde{x}-y|^2|&=4|(I-2\nu\otimes\nu+2Q(x))(\tilde{x}-y)|^2 \\
   &\leq4|\tilde{x}-y|^2+32|\tilde{x}-y|^3 \\
   \nabla^2_{ij} 
   |\tilde{x}-y|^2&=2\delta_{ij}-4\sum_{k}(\partial_{x_j}(\nu_i
   \nu_k)-\partial_{x_j}q_{ik})(\tilde x_k-y_k) \\
   &\quad+8q_{ij}+8\sum_{k=1}^n(q_{ik}q_{jk}-\nu_i\nu_kq_{jk}-\nu_j\nu_kq_{ik}).
  \end{split}
 \end{equation}
 where $Q(x)=(q_{ij})$. By direct calculation and \eqref{eq:3.4s}, we have
 \begin{equation}
 \label{eq:3.4t}
  \begin{split}
   \partial_t\tilde\rho&=\left(\frac{n-1}{2(s-t)}-\frac{|\tilde{x}-y|^2}{4(s-t)^2}\right)\tilde\rho, \ \ 
   \nabla\tilde\rho=-\frac{\nabla|\tilde x-y|^2}{{4(s-t)}}\tilde\rho, \\
   \nabla^2\tilde\rho
   &=\left(\frac{\nabla|\tilde x-y|^2\otimes \nabla|\tilde x-y|^2}{16(s-t)^2}
   -\frac{D^2|\tilde{x}-y|^2}{4(s-t)}
   \right)\tilde\rho
  \end{split}
 \end{equation}
 Using \eqref{eq:3.4t},
 we obtain \eqref{eq:3.4}.
\end{proof}

\begin{proof}
 [Proof of Proposition \ref{Prop:3.1}]
  By integration by part and using \eqref{eq:1.1} and denoting 
$f^{\varepsilon}:=-\varepsilon\Delta u^{\varepsilon}
 +\frac{W'(u^{\varepsilon})}{\varepsilon}$, we may obtain for each $i=1,2$
 \begin{equation}
  \label{eq:3.2}
   \begin{split}
    \frac{d}{dt}\int_\Omega\rho_{i}\,d\mu_t^\varepsilon
    &=-\frac1\varepsilon\int_\Omega
    (f^{\varepsilon})^2\rho_{i}\,dx+\int_\Omega
     f^{\varepsilon}
    \nabla\rho_{i}\cdot\nabla u^\varepsilon\,dx+\int_\Omega\partial_t \rho_{i}\,d\mu_t^\varepsilon \\
    &=\int_\Omega
    -\frac1\varepsilon\left(f^{\varepsilon}
    -\frac{\varepsilon\nabla u^\varepsilon\cdot\nabla\rho_i}{\rho_i}
    \right)^2\rho_i+\varepsilon
    \frac{(\nabla u^\varepsilon\cdot\nabla\rho_i)^2}{\rho_i} \,dx \\
    &\ \ \ -\int_\Omega     f^{\varepsilon}
    \nabla\rho_{i}\cdot\nabla u^\varepsilon\,dx +\int_\Omega\partial_t\rho_{i}\,d\mu_t^\varepsilon.
   \end{split}
 \end{equation}
By integration by parts again, we have
\begin{equation}
 \begin{split}
  \quad-\int_\Omega
  f^{\varepsilon}
  \nabla\rho_{i}\cdot\nabla u^\varepsilon
  \,dx 
  & =\int_\Omega
  \Delta \rho_{i}
  \,d\mu_t^\varepsilon
  -\int_\Omega
  \varepsilon (\nabla u^{\varepsilon}\otimes \nabla u^{\varepsilon}\cdot\nabla^2\rho_{i})
  \,dx \\
  &\quad-\int_{\partial\Omega}\nabla\rho_{i}\cdot\nu
  \left(\frac{\varepsilon}{2}|\nabla u^\varepsilon|^2
  +\frac{W(u^\varepsilon)}{\varepsilon}
  \right)\,d{\mathcal H}^{n-1}.
 \end{split} 
 \label{eq:3.2n}
\end{equation}
 In the following,  denote $a^\varepsilon
 =\frac{\nabla u^\varepsilon}{|\nabla u^\varepsilon|}$.
 For $x\in \partial\Omega$, $x=\tilde x$ and one can check that
 $\nabla |\tilde x-y|^2\cdot \nu+\nabla |x-y|^2\cdot \nu=0$, which implies
 $\nabla(\rho_1+\rho_2)\cdot\nu\big|_{\partial\Omega}\equiv0$.
 Therefore we may obtain (using also $\mu_t^{\varepsilon}=\varepsilon |\nabla u^{\varepsilon}|^2
 -\xi_t^{\varepsilon}$)
\begin{equation*}
 \begin{split}
  \frac{d}{dt}\int_\Omega\rho_{1}+\rho_2\,d\mu_t^\varepsilon
  &\leq
  \sum_{i=1,2}\int_\Omega\left(
  \frac{(a^\varepsilon\cdot\nabla\rho_i)^2}{\rho_i}
  +\left((I-a^\varepsilon\otimes a^\varepsilon)\cdot\nabla ^2\rho_i\right)
  +\partial_t\rho_i
  \right)\varepsilon |\nabla u^{\varepsilon}|^2\, dx
   \\
   &\quad -  \sum_{i=1,2}\int_{\Omega} (\partial_t \rho_i+\Delta \rho_i)\, d\xi_t^{\varepsilon}.
  \end{split}
\end{equation*} 
  
Note that $\rho_i$ is bounded uniformly on $\{ |\nabla \eta|\neq 0\}$.
 Using this fact, \eqref{eq:3.3} and \eqref{eq:3.4} we may
 obtain
 \begin{equation}
  \frac{(a^\varepsilon\cdot\nabla\rho_1)^2}{\rho_1}
   +\left((I-a^\varepsilon\otimes a^\varepsilon)\cdot\nabla^2\rho_1\right)
   +\partial_t\rho_1 
   \leq \Cr{c-4}
   \label{eq:3.add1}
 \end{equation} 
 and
 \begin{equation}
  \begin{split}
   &\quad\frac{(a^\varepsilon\cdot\nabla
   \rho_2)^2}{\rho_2}
   +\left((I-a^\varepsilon\otimes a^\varepsilon)\cdot\nabla^2\rho_2\right)
   +\partial_t\rho_2 \\
   &\leq\sum_{i,j,k=1}^n\left(
   \frac{(\delta_{ij}-n_in_j)\partial_{x_j}(\nu_i\nu_k)(\tilde{x}_k-y_k)}%
   {2(s-t)}\right)
   \rho_2
   +\Cr{c-4}\leq
   \Cr{c-3}\left(\frac{|\tilde{x}-y|}{s-t}
   +\frac{|\tilde{x}-y|^3}{(s-t)^2}\right)
   \rho_2
   +\Cr{c-4}
  \end{split} 
  \label{eq:3.add2}
\end{equation} 
 for some constant $\Cr{c-3},\, \Cr{c-4}>0$ depending only on $n$ and $\Cr{c-2}$.
 In the following $\Cr{c-3}$ and $\Cr{c-4}$ may be different constants which depend
 only on $n,\Cr{c-1},\Cr{c-2}$.
 To compute the integration of \eqref{eq:3.add2}, we decompose the integration as
 \begin{equation*}
  \begin{split}
   \int_\Omega\frac{\Cr{c-3}|\tilde{x}-y|}{s-t}\rho_2 \varepsilon |\nabla u^{\varepsilon}|^2
   \,dx
   &\leq \Cr{c-3}\int_{\Omega\cap\{|\tilde{x}-y|\leq(s-t)^\frac14\}}\frac{|\tilde{x}-y|}{s-t}\rho_2
   \,d\mu_t^\varepsilon \\
   &\quad+\Cr{c-3}\int_{\Omega\cap\{|\tilde{x}-y|\geq(s-t)^\frac14\}}\frac{|\tilde{x}-y|}{s-t}\rho_2
   \,d\mu_t^\varepsilon
   =:I_1+I_2, \\
   \int_\Omega\frac{|\tilde{x}-y|^3}{(s-t)^2}\rho_2 \varepsilon |\nabla u^{\varepsilon}|^2
   \,dx
   &\leq \int_{\Omega\cap\{|\tilde{x}-y|\leq(s-t)^\frac{5}{12}\}}\frac{|\tilde{x}-y|}{s-t}\rho_2
   \,d\mu_t^\varepsilon \\
   &\quad+\int_{\Omega\cap\{|\tilde{x}-y|\geq(s-t)^\frac{5}{12}\}}\frac{|\tilde{x}-y|}{s-t}\rho_2
   \,d\mu_t^\varepsilon=:I_3+I_4.
  \end{split}
 \end{equation*}
 $I_1$ is estimated by
 \begin{equation}
  \label{eq:3.5}
  I_1\leq \Cr{c-3}(s-t)^{-\frac34}
  \int_{\Omega\cap\{|\tilde{x}-y|<(s-t)^\frac14\}}
  \rho_2\,d\mu_t^\varepsilon
  \leq \Cr{c-3}(s-t)^{-\frac34}
  \int_{\Omega}
  \rho_2\,d\mu_t^\varepsilon. 
 \end{equation}
 We may estimate $I_2$ as
 \begin{equation}
  \label{eq:3.6}
  I_2\leq \frac{\Cr{c-3}}{(s-t)^{1+\frac{n-1}2}}e^{\frac{-1}{4\sqrt{s-t}}}
   \int_{\Omega\cap {\rm spt}\,\rho_2}|\tilde{x}-y|
   \,d\mu_t^\varepsilon
   \leq \Cr{c-4}
 \end{equation}
 with an appropriately chosen $\Cr{c-4}$. $I_3$ and $I_4$ are
 estimated as a similar manner. 
 
 Therefore from \eqref{eq:3.5} and \eqref{eq:3.6}  we obtain
\begin{equation*}
\begin{split}
 &\quad \int_\Omega
 \left(\frac{(a^\varepsilon\cdot\nabla
 \rho_2)^2}{\rho_2}
 +\left((I-a^\varepsilon\otimes a^\varepsilon)\cdot\nabla^2\rho_2\right)
 +\partial_t\rho_2\right)
 \varepsilon|\nabla u^{\varepsilon}|^2\,dx \\
 &\leq \Cr{c-3}(s-t)^{-\frac34}
 \int_{\Omega}
 \rho_2\,d\mu_t^\varepsilon 
 +\Cr{c-4}.
\end{split}
\end{equation*}
 Almost a similar calculation shows that
 \begin{equation*}
  \begin{split}
   &\quad - \int_\Omega (\partial_t \rho_1+\Delta\rho_1)\, d\xi_t^{\varepsilon}
 \leq \int_\Omega\frac{\rho_1}{2(s-t)}\,d\xi_t^\varepsilon
   +\Cr{c-4}
  \end{split} 
\end{equation*}  
 and
 \begin{equation*}
  \begin{split}
   &\quad - \int_\Omega
   (\partial_t\rho_2+\Delta\rho_2)\, d\xi_t^{\varepsilon}
   \leq 
   \int_\Omega\frac{\rho_2}{2(s-t)}\,d\xi_t^\varepsilon
   +\Cr{c-3}(s-t)^{-\frac34}
   \int_{\Omega}
   \rho_2\,d\mu_t^\varepsilon 
   +\Cr{c-4}.
  \end{split} 
\end{equation*}
 Therefore, we have

 \begin{equation*}
  \frac{d}{dt}\int_{\Omega}(\rho_1+\rho_2)\,d\mu_t^\varepsilon 
   \leq
   \frac{\Cr{c-3}}{(s-t)^{\frac34}}\int_{\Omega}(\rho_1+\rho_2)\,d\mu_t^\varepsilon
   +\Cr{c-4}
   +\int_\Omega\frac{\rho_1+\rho_2}{2(s-t)}\,d\xi_t^\varepsilon.
 \end{equation*}
 This leads to \eqref{eq:3.7}. The inequality
 \eqref{eq:3.75} can be obtained by observing that ${\rm spt}\,\rho_1\subset \Omega$
 for $y\in \Omega\setminus N_{\Cr{c-2}/2}$ and by following the same but simpler 
 computation with $\rho_2\equiv 0$. 
\end{proof}
We use the following estimate later. 
\begin{proposition}
\label{Prop:3.3}
There exists a constant $\Cl[c]{c-5}$ depending only on $n,\Cr{c-1}$ and $\Cr{c-2}$ with
\begin{equation}
\int_{t}^{t+1}\int_{\partial \Omega} \Big(\frac{\varepsilon}{2}|\nabla u^{\varepsilon}|^2+\frac{W(u^{\varepsilon})}{\varepsilon}\Big)
\, d{\mathcal H}^{n-1}dt\leq \Cr{c-5}
\label{surf}
\end{equation}
for any $t\geq 0$. 
\end{proposition}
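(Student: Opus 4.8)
The plan is to obtain the boundary energy bound \eqref{surf} by integrating the Neumann boundary traces of the energy density against a suitable interior cutoff and using the bulk energy dissipation estimate \eqref{engu2} together with the monotonicity structure already established. First I would pick a smooth cutoff $\chi\in C^\infty(\overline\Omega)$ with $\chi\equiv 1$ on a neighborhood of $\partial\Omega$ inside $N_{\Cr{c-2}}$ and $\chi\equiv 0$ outside $N_{\Cr{c-2}/2}$, and $|\nabla\chi|,|\nabla^2\chi|\leq C(\Cr{c-2})$. The starting point is the pointwise identity for the energy density $e^\varepsilon:=\frac\varepsilon2|\nabla u^\varepsilon|^2+\frac{W(u^\varepsilon)}{\varepsilon}$: multiplying \eqref{eq:1.1} by $\partial_t u^\varepsilon$ and by $\nabla u^\varepsilon$ respectively, one gets, using the Neumann condition $\partial u^\varepsilon/\partial\nu=0$ on $\partial\Omega$,
\begin{equation*}
\partial_t e^\varepsilon = \varepsilon\,\Div\big( (\nabla u^\varepsilon)\,\partial_t u^\varepsilon\big) - \frac{1}{\varepsilon}(f^\varepsilon)^2,
\end{equation*}
where $f^\varepsilon=-\varepsilon\Delta u^\varepsilon+\frac{W'(u^\varepsilon)}{\varepsilon}$.

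Next I would test this against $\chi$ and integrate over $\Omega\times[t,t+1]$; the divergence term integrated against $\chi$ produces, after integration by parts, only an interior term $-\varepsilon\int_\Omega \nabla\chi\cdot(\nabla u^\varepsilon)\partial_t u^\varepsilon\,dx$ (the boundary contribution vanishes because $\partial u^\varepsilon/\partial\nu=0$), and this is controlled in absolute value by $\frac{\varepsilon}{2}|\nabla\chi|^2|\nabla u^\varepsilon|^2 + \frac{\varepsilon}{2}(\partial_t u^\varepsilon)^2$. Integrating in time and using \eqref{engu2} to absorb $\int_0^{t+1}\!\int_\Omega \varepsilon(\partial_t u^\varepsilon)^2$ and $\int_0^{t+1}\!\int_\Omega \frac{1}{\varepsilon}(f^\varepsilon)^2$, together with the trivial bound $\int_\Omega \chi\,e^\varepsilon\,dx\leq E^{\varepsilon_i}[u^{\varepsilon_i}(\cdot,t)]\leq \Cr{c-1}$, this gives control of $\int_t^{t+1}\!\int_\Omega \chi\,e^\varepsilon\,dxdt$ and its time derivative, but not yet of the boundary trace.

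To extract the boundary trace I would instead use a vector field $Y\in C^1(\overline\Omega;\R^n)$ with $Y=\nu$ on $\partial\Omega$ (e.g. $Y(x)=-\nabla d(x)\,\chi(x)$ with $d$ the signed distance, so $Y\cdot\nu=1$ on $\partial\Omega$) and compute $\int_\Omega \Div(e^\varepsilon Y)\,dx = \int_{\partial\Omega} e^\varepsilon\,d\mathcal H^{n-1}$. Expanding the divergence and using the stationarity-type identity $\varepsilon\nabla u^\varepsilon\otimes\nabla u^\varepsilon\cdot\nabla Y = \Div\big(\varepsilon(\nabla u^\varepsilon\cdot Y)\nabla u^\varepsilon\big) - (\nabla u^\varepsilon\cdot Y)(\varepsilon\Delta u^\varepsilon)$, the boundary term $\int_{\partial\Omega} e^\varepsilon\,d\mathcal H^{n-1}$ is expressed as a sum of: (i) interior terms bounded by $C(\Cr{c-2})\int_\Omega e^\varepsilon\,dx\leq C\Cr{c-1}$, (ii) a term $\int_\Omega (\nabla u^\varepsilon\cdot Y)\,f^\varepsilon/\varepsilon\cdot \varepsilon\,dx$ plus the discrepancy-type term $\int_\Omega \frac{W'}{\varepsilon}(\nabla u^\varepsilon\cdot Y)\,dx = \int_\Omega \nabla(W(u^\varepsilon)/\varepsilon)\cdot Y\,dx = -\int_\Omega (W(u^\varepsilon)/\varepsilon)\Div Y\,dx$, again bounded by $C\Cr{c-1}$, and (iii) the genuinely problematic term $\int_\Omega (\nabla u^\varepsilon\cdot Y)\,\partial_t u^\varepsilon\,dx$, which after integrating in time over $[t,t+1]$ is controlled by $\big(\int_t^{t+1}\!\int_\Omega \varepsilon|\nabla u^\varepsilon|^2\big)^{1/2}\big(\int_t^{t+1}\!\int_\Omega \varepsilon^{-1}|\partial_t u^\varepsilon|^2\big)^{1/2}\leq C\Cr{c-1}$ by \eqref{engu2} (noting $f^\varepsilon = -\varepsilon\,\partial_t u^\varepsilon$ from \eqref{eq:1.1}, so $\int_0^\infty\!\int_\Omega \varepsilon(\partial_t u^\varepsilon)^2\leq\Cr{c-1}$). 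Integrating the resulting inequality $\int_{\partial\Omega} e^\varepsilon\,d\mathcal H^{n-1}\leq C\Cr{c-1} + (\text{the term in (iii)'s integrand})$ over $t\in[t,t+1]$ and collecting constants yields \eqref{surf} with $\Cr{c-5}$ depending only on $n,\Cr{c-1},\Cr{c-2}$. The main obstacle is the last step: the pointwise boundary energy has no sign and the only global-in-time control available is the dissipation integral in \eqref{engu2}, so the argument genuinely needs the time-averaging over a unit interval and the identification $f^\varepsilon=-\varepsilon\partial_t u^\varepsilon$ to turn $\partial_t u^\varepsilon$ into an $L^2_{t,x}$-bounded quantity; without averaging one cannot bound the pointwise-in-time boundary integral.
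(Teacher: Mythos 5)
Your vector-field computation is in substance the paper's own proof: the paper takes $\phi\in C^2(\overline\Omega)$ equal to ${\rm dist}(\cdot,\partial\Omega)$ near $\partial\Omega$, so that $\nabla\phi\cdot\nu=-1$ there, and reruns \eqref{eq:3.2}--\eqref{eq:3.2n} with $\rho_i$ replaced by $\phi$; your $Y=-\nabla d\,\chi$ is exactly $-\nabla\phi$, and your ``genuinely problematic term'' $\int_\Omega \e(\nabla u^\e\cdot Y)\,\partial_t u^\e\,dx$ is the paper's $\int_\Omega f^\e\,\nabla\phi\cdot\nabla u^\e\,dx$. The only real difference is how that term is closed: you apply Cauchy--Schwarz in space--time against the dissipation $\int\!\!\int \e^{-1}(f^\e)^2\le\Cr{c-1}$ from \eqref{engu2}, whereas the paper keeps it inside $\frac{d}{dt}\int_\Omega\phi\,d\mu_t^\e$ (dropping the negative square term) and integrates over $[t,t+1]$ using $\int_\Omega\phi\,d\mu_t^\e\le\|\phi\|_\infty\Cr{c-1}$. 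Both need the unit time interval, as you correctly emphasize. Your first paragraph (testing the energy identity against $\chi$) is a dead end you yourself identify and can be deleted.

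Two bookkeeping slips should be fixed. First, the identity $\e\nabla u^\e\otimes\nabla u^\e\cdot\nabla Y=\Div\bigl(\e(\nabla u^\e\cdot Y)\nabla u^\e\bigr)-(\nabla u^\e\cdot Y)\,\e\Delta u^\e$ is missing the term $-\e(\nabla^2u^\e\,Y)\cdot\nabla u^\e=-\tfrac{\e}{2}\nabla|\nabla u^\e|^2\cdot Y$; since that is precisely the piece of $\nabla e^\e\cdot Y$ you are trying to eliminate, the corrected identity still makes the argument close, but as written the ledger does not balance. Second, integrating $\int_\Omega\e^{-1}W'\,(\nabla u^\e\cdot Y)\,dx=\int_\Omega\nabla(W/\e)\cdot Y\,dx$ by parts produces the boundary term $\int_{\partial\Omega}(W/\e)(Y\cdot\nu)=\int_{\partial\Omega}W/\e$, which is part of the quantity you are estimating and cannot be dropped; moreover this step double-counts, because the $W'$ term should instead be combined with $-\e\Delta u^\e(\nabla u^\e\cdot Y)$ to form $f^\e(\nabla u^\e\cdot Y)=-\e\,\partial_t u^\e(\nabla u^\e\cdot Y)$, i.e.\ your term (iii). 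With those corrections your decomposition reads: $\int_{\partial\Omega}e^\e=(\text{interior terms}\le C(\Cr{c-2})\Cr{c-1})+\int_\Omega f^\e(\nabla u^\e\cdot Y)\,dx$, and the rest of your argument goes through.
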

\begin{proof}
[Proof of Proposition \ref{Prop:3.3}]
Let $\phi\in C^2(\overline\Omega)$ be a non-negative function so that, near $\partial \Omega$, $\phi(x)={\rm dist}\,(x,\partial \Omega)$,
and smoothly becomes a constant function on $\Omega\setminus N_{\Cr{c-2}/2}$. We may construct such function so that
$\|\phi\|_{C^2(\overline\Omega)}$ is bounded only in terms of $\Cr{c-2}$ and $n$.
Below, we use $\nabla\phi\cdot \nu=-1$ on $\partial \Omega$. 
We then compute as in the first line of \eqref{eq:3.2} and \eqref{eq:3.2n} with $\rho_i$ there replaced by $\phi$. By \eqref{engu2}
and dropping a negative term on the right-hand side,
we obtain
\begin{equation*}
\frac{d}{dt}\int_{\Omega}\phi\, d\mu_t^{\varepsilon}\leq \Cr{c-1} \|\phi\|_{C^2} + \int_{\partial \Omega}\nabla\phi\cdot\nu \Big(\frac{\varepsilon}{2}
|\nabla u^{\varepsilon}|^2+\frac{W(u^{\varepsilon})}{\varepsilon}\Big)\, d{\mathcal H}^{n-1}.
\end{equation*}
By integrating over $[t,t+1]$ and again using \eqref{engu2}, we obtain the desired estimate. 
\end{proof}
\section{Estimate on $\xi_t^{\e}$ from above}
\label{sec:4}
In this section we prove that $\xi_{t}^{\e}$ may be estimated from above
by the sup norm for any positive time.  One can prove the desired estimate
by modifying the similar estimate in \cite{Hutchinson,arXiv:1307.6629}
combined with the boundary behavior of $|\nabla u^{\e}|^2$ when $\Omega$
is strictly convex. It is here that the assumption of strict convexity is essential. 
%
\begin{proposition}[Negativity of the discrepancy] 
 \label{prop:4.1}
 For any $0<T<\infty$,  $0<\e<1$, there exists $\Cl[c]{c-6}$ depending only on 
 $T$ such that
 \begin{equation}
  \label{eq:4.2}
  \sup_{x\in \Omega,\, t\in [T,\infty)} \xi_t^{\e}   \leq \Cr{c-6}.
 \end{equation}
\end{proposition}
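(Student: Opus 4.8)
The plan is to run a parabolic maximum principle for the rescaled discrepancy $P^{\e}:=2\xi_t^{\e}=\e|\nabla u^{\e}|^2-\frac{2W(u^{\e})}{\e}$, following the interior argument of \cite{Hutchinson,arXiv:1307.6629} and supplying the one genuinely new ingredient, namely the boundary behaviour forced by strict convexity and the Neumann condition. First I would record the two elementary identities, valid on $\overline\Omega\times(0,\infty)$ by the smoothness \eqref{ubel} and by \eqref{eq:1.1} (using $\partial_t u^{\e}-\Delta u^{\e}=-W'(u^{\e})/\e^2$):
\[
(\partial_t-\Delta)P^{\e}=\frac{2(W'(u^{\e}))^2}{\e^3}-2\e|\nabla^2u^{\e}|^2,\qquad
\nabla^2u^{\e}\cdot\nabla u^{\e}=\frac{1}{2\e}\nabla P^{\e}+\frac{W'(u^{\e})}{\e^2}\nabla u^{\e}.
\]
On the open set $\{\nabla u^{\e}\neq0\}$, inserting $|\nabla^2u^{\e}|^2\geq|\nabla^2u^{\e}\cdot\nabla u^{\e}|^2/|\nabla u^{\e}|^2$ and expanding the square produces, after discarding a nonpositive term,
\[
(\partial_t-\Delta)P^{\e}\leq b^{\e}\cdot\nabla P^{\e},\qquad b^{\e}:=-\frac{2W'(u^{\e})}{\e^2|\nabla u^{\e}|^2}\,\nabla u^{\e},
\]
whereas $P^{\e}=-2W(u^{\e})/\e\leq0$ on $\{\nabla u^{\e}=0\}$. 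On $\{P^{\e}>0\}$ one has $|\nabla u^{\e}|\geq\sqrt{2W(u^{\e})}/\e$, so $|b^{\e}|\leq\sqrt2\,|W'(u^{\e})|/(\e\sqrt{W(u^{\e})})\leq C/\e$, because $(W')^2\leq C_W\,W$ on $[-1,1]$ by \eqref{wcond1}--\eqref{wcond2} and $|u^{\e}|\leq1$ by \eqref{absu2}; thus on $\{P^{\e}>0\}$, $P^{\e}$ is a subsolution of a uniformly parabolic equation with drift bounded by $C/\e$, and the degenerate set is harmless since for any $\delta>0$ the set $\{P^{\e}>\delta\}$ is a compact subset of $\{\nabla u^{\e}\neq0\}$ (its closure meeting $\{\nabla u^{\e}=0\}$ only where $P^{\e}\leq0$). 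Up to this point everything is as in the cited interior works.

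Next I would supply the boundary estimate. On $\partial\Omega$ the Neumann condition makes $\nabla u^{\e}$ tangent, hence $\partial_{\nu}\!\big(W(u^{\e})/\e\big)=\tfrac{W'(u^{\e})}{\e}\partial_{\nu}u^{\e}=0$; and differentiating $\nabla u^{\e}\cdot\nu\equiv0$ along a tangent direction $\tau$ gives $\nabla^2u^{\e}(\tau,\nu)=-A(\nabla u^{\e},\tau)$, with $A$ the second fundamental form of $\partial\Omega$ with respect to the outer normal $\nu$. Taking $\tau=\nabla u^{\e}$,
\[
\partial_{\nu}P^{\e}=\e\,\partial_{\nu}|\nabla u^{\e}|^2=2\e\,\nabla^2u^{\e}(\nabla u^{\e},\nu)=-2\e\,A(\nabla u^{\e},\nabla u^{\e})\leq0\quad\text{on }\partial\Omega,
\]
strictly unless $\nabla u^{\e}=0$, precisely because by \eqref{ocond1} all principal curvatures of $\partial\Omega$ are positive, i.e.\ $A$ is positive definite --- this is the only place strict convexity enters. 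Consequently a maximum of $P^{\e}(\cdot,t)$ over $\overline\Omega$ at which $P^{\e}>0$ cannot sit on $\partial\Omega$: there $\partial_{\nu}P^{\e}\geq0$ would force $\nabla u^{\e}=0$, hence $P^{\e}\leq0$. The parabolic maximum principle of \cite{Hutchinson,arXiv:1307.6629} therefore applies on $\overline\Omega$ with no boundary term, giving $\sup_{\overline\Omega\times[t_1,t_2]}P^{\e}\leq\max\{\sup_{\overline\Omega\times\{t_1\}}P^{\e},\,0\}$ for all $0<t_1<t_2$.

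Finally, to obtain the $\e$-independent bound for $t\geq T$ with $\Cr{c-6}$ depending only on $T$ (and $n,W,\Cr{c-1},\Omega$), I would combine the differential inequality above with the interior parabolic gradient estimate for $u^{\e}$ at times bounded away from $0$ and the dissipation bound \eqref{engu2}, exactly as in \cite{Hutchinson,arXiv:1307.6629}; it is here that the mere $W^{1,2}$ regularity of $u_0^{\e}$ forces both the restriction $t\geq T$ and the dependence of $\Cr{c-6}$ on $T$. The hard part is precisely this last step: the maximum-principle comparison of the previous paragraph is scale invariant and by itself only bounds $\sup_{t\geq T}P^{\e}$ by the a priori uncontrolled quantity $\sup_{\overline\Omega}P^{\e}(\cdot,T)$, so turning it into a bound depending only on $T$ and the fixed data is the real content of the interior estimate being imported. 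By contrast the two structural ingredients --- the $P$-function differential inequality and the boundary inequality --- are elementary once \eqref{eq:1.1}, the maximum bound \eqref{absu2}, and the convexity \eqref{ocond1} are used.
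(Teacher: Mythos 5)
Your two structural ingredients are correct and are exactly the ones the paper uses: the evolution inequality for the discrepancy obtained from the Bochner-type computation plus Cauchy--Schwarz (the paper's \eqref{eq:4.05}, derived after the parabolic rescaling $x\mapsto x/\e$, $t\mapsto t/\e^2$), and the exclusion of boundary maxima via Lemma \ref{lem:4.1} together with strict convexity (at a boundary maximum $\partial_\nu$ of the discrepancy is $\ge 0$, but it equals $\e$ times twice the second fundamental form applied to $\nabla u^{\e}$, which forces $\nabla u^{\e}=0$ and hence a nonpositive discrepancy). Up to that point your proposal matches the paper.

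The gap is the step you yourself flag as ``the hard part'' and then defer wholesale to \cite{Hutchinson,arXiv:1307.6629}: converting the comparison principle into an $\e$-independent bound. Your unperturbed inequality $(\partial_t-\Delta)P^{\e}\le b^{\e}\cdot\nabla P^{\e}$ has no zeroth-order forcing, so the maximum principle only propagates $\sup P^{\e}(\cdot,T)$ forward in time; by Lemma \ref{lem:6.2} that quantity is of order $\e^{-1}$ (since $\e|\nabla u^{\e}|\le C$ gives $\e|\nabla u^{\e}|^2\le C^2/\e$), which is not the claimed bound. Combining with the ``dissipation bound \eqref{engu2}'' does not help either --- the energy inequality plays no role in the paper's proof of this proposition. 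The missing idea is a strictly concave perturbation of the discrepancy: in rescaled variables the paper sets $\xi=\tfrac12|\nabla u|^2-W(u)-G(u)$ with $G(u)=\e\bigl(1-\tfrac18(u-\gamma)^2\bigr)$, so that $G''=-\e/4$ contributes the term $G''|\nabla u|^2$ to \eqref{eq:4.05}, and subtracts a temporal cutoff $\phi$ descending from the crude bound $M=\sup\tfrac12|\nabla u|^2$ on $[\e^{-2}T/2,\e^{-2}T]$ with $|\phi'|\le 4T^{-1}\e^2M$ (see \eqref{eq:4.06}). At an interior space-time maximum where $\tilde\xi\ge C\e$ one has $|\nabla u|^2\ge 2\tilde\xi\ge 2C\e$, so the first-derivative terms vanish, $G'W'\ge0$ can be discarded, and \eqref{eq:4.11} gives $-4T^{-1}\e^2M\le G''|\nabla u|^2<-\tfrac{\e}{4}\cdot 2C\e$, a contradiction once $C=C(T)$ is large. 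It is this interplay between the $O(\e^2)$ loss from the cutoff and the $O(C\e^2)$ gain from $G''|\nabla u|^2$ that produces the $\e$-independent constant; without introducing $G$ (or an equivalent perturbation), the route you describe does not close.
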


To show Proposition \ref{prop:4.1}, we use the following identities 
which gives a relationship between
the normal derivative of $|\nabla u|^2$ and the second fundamental form of the boundary.
Though it is a simple observation and has been used in a number of papers (see for
%
%
example \cite{MR0480282,MR0555661,MR1620498}), we include
the proof for the convenience of the reader. 
\begin{lemma}
 \label{lem:4.1}
 Let $B_x$ be the
 second fundamental form of $\partial\Omega$ at
 $x\in\partial\Omega$. Suppose that $u\in C^2(\overline{\Omega})$ satisfies
 $\nabla u\cdot\nu=0$ on $\partial\Omega$. Then at $x\in \partial\Omega$, we have
 \begin{equation}
  \frac{\partial}{\partial \nu}|\nabla u|^2
   =2B_x(\nabla u,\nabla u).
   \label{Bres}
 \end{equation}
\end{lemma}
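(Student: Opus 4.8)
The plan is to compute $\frac{\partial}{\partial\nu}|\nabla u|^2$ at a boundary point by a direct differentiation and then to trade the normal derivatives of $\nabla u$ for tangential derivatives using the Neumann condition $\nabla u\cdot\nu=0$, which is valid only \emph{along} $\partial\Omega$. First I would fix $x\in\partial\Omega$ and extend $\nu$ to a neighbourhood as the (inward or outward) unit normal field to the level sets of the signed distance function, so $\nu$ is $C^1$ near $\partial\Omega$, $|\nu|\equiv 1$, and $\nabla_\nu\nu=0$ (the integral curves of $\nu$ are geodesics, i.e. straight lines). Writing $|\nabla u|^2=\sum_k (\partial_k u)^2$, one has
\begin{equation*}
\tfrac12\,\partial_\nu |\nabla u|^2=\sum_{j,k}\nu_j\,\partial_j\partial_k u\,\partial_k u=\sum_k (\nabla^2 u\,\nu)_k\,\partial_k u=\nabla^2u(\nu,\nabla u).
\end{equation*}

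Next I would use the Neumann condition. On $\partial\Omega$ we have $\nabla u\cdot\nu=0$; moreover by the hypothesis $\nabla u\cdot\nu=0$ holds on all of $\partial\Omega$, so its tangential derivatives vanish: for any vector $\tau$ tangent to $\partial\Omega$ at $x$,
\begin{equation*}
0=\nabla_\tau(\nabla u\cdot\nu)=\nabla^2u(\tau,\nu)+\nabla u\cdot(\nabla_\tau\nu)=\nabla^2u(\tau,\nu)-B_x(\tau,\nabla u),
\end{equation*}
where I have used the sign convention for the second fundamental form $B_x(\tau_1,\tau_2)=-\langle\nabla_{\tau_1}\nu,\tau_2\rangle$ (matching the statement \eqref{Bres}; I would double‑check the sign against the paper's convention and flip if needed). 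The key point now is that $\nabla u$ is itself tangent to $\partial\Omega$ at $x$, precisely because of $\nabla u\cdot\nu=0$. Hence I may take $\tau=\nabla u$ in the identity above, obtaining $\nabla^2u(\nabla u,\nu)=B_x(\nabla u,\nabla u)$. Combining with the first display and the symmetry of $\nabla^2 u$ gives $\tfrac12\,\partial_\nu|\nabla u|^2=B_x(\nabla u,\nabla u)$, which is \eqref{Bres}.

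There is no serious obstacle here; the lemma is essentially a bookkeeping computation. The only points requiring a little care are (i) making sure the normal field is extended so that $\nabla_\nu\nu=0$, so that no spurious term enters when the second derivative is written as $\nabla^2u(\nu,\nabla u)$ rather than $\nabla_\nu(\nabla u)\cdot\nabla u$; (ii) keeping the sign convention for $B_x$ consistent with the rest of the paper — the factor $2$ and the sign in \eqref{Bres} should be verified on the model case $\Omega=B_1$, where $B_x$ is (minus) the identity on the tangent space and $\partial_\nu|\nabla u|^2$ can be computed by hand; and (iii) noting explicitly that the substitution $\tau=\nabla u$ is legitimate only because $\nabla u(x)\in T_x\partial\Omega$, which is exactly the content of the Neumann condition. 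Since $u\in C^2(\overline\Omega)$ all derivatives above exist up to the boundary and the computation is valid pointwise on $\partial\Omega$.
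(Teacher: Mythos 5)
Your proof is correct and is essentially the paper's argument: both differentiate the identity $\nabla u\cdot\nu=0$ tangentially along $\partial\Omega$ and then use that $\nabla u$ is itself tangent at $x$ to substitute $\tau=\nabla u$. The only difference is cosmetic — the paper works in adapted graph coordinates where the principal directions diagonalize $B_x$, while you phrase the same computation invariantly via the shape operator; your sign convention ($B_x\leq 0$ on convex domains) matches the one used in Remark \ref{rem:4.1}.
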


\begin{remark}
 In particular, when $\Omega$ is convex, the right-hand side of \eqref{Bres} 
 is $\leq 0$. Furthermore, when $\Omega$ is strictly convex, \eqref{Bres} is $=0$ if and 
 only if $\nabla u=0$ at $x$. 
 \label{rem:4.1}
 \end{remark}
\begin{proof}
 [Proof of Lemma \ref{lem:4.1}]
 Without loss of generality by translation and rotation we may assume
 that $\partial\Omega$ is a graph near $x=0\in\partial\Omega$, namely
 there exists a function $f=f(x_1,\dots,x_{n-1})$ such that
 $\partial\Omega\cap B_r$ is included in the graph of $f$ for some $r>0$ and
 \begin{equation*}
  0=f(0,\cdots,0),\quad \nabla_{\R^{n-1}}f(0,\cdots,0)=0,\quad
   \frac{\partial^2f}{\partial x_i\partial x_j}(0,\cdots,0)=\kappa_j\delta_{ij},
 \end{equation*}
 where $\kappa_1,\dots,\kappa_{n-1}$ are the principal curvatures at
 $x=0$. We remark that
 \begin{equation*}
  B_0(X,Y)=\sum_{i,j=1}^{n-1}\frac{\partial^2f}{\partial x_i\partial x_j}(0)
 X_iY_j=\kappa_jX_jY_j
 \end{equation*}
for $X=(X_i)$, $Y=(Y_i)\in T_0\partial\Omega$. The outer unit normal
 vector is given by
 \begin{equation*}
    \nu=\frac{1}{\sqrt{1+|\nabla_{\R^{n-1}}f}|^2}(-\nabla_{\R^{n-1}}f,1).
 \end{equation*}
 By the boundary condition of $u$ we have
 \begin{equation*}
  0=\frac{\partial u}{\partial\nu}
   =\frac{1}{\sqrt{1+|\nabla_{\R^{n-1}}f}|^2}
   \left(
    -\sum_{i=1}^{n-1}
    \frac{\partial f}{\partial x_i}\frac{\partial u}{\partial x_i}
    +\frac{\partial u}{\partial x_n}
   \right).
 \end{equation*}
 Differentiating with respect to $x_j$ again and plugging in $x=0$, we have
 \begin{equation*}
  \frac{\partial^2 u}{\partial x_n\partial x_j}
   =\kappa_j\frac{\partial u}{\partial x_j}
 \end{equation*}
 for $j=1,\dots,n-1$.
By the boundary condition again, we may compute
 \begin{equation*}
  \frac{\partial}{\partial\nu}|\nabla u|^2
   =2\sum_{j=1}^{n-1}\frac{\partial u}{\partial x_j}
   \frac{\partial^2 u}{\partial x_n\partial x_j}
   =2\sum_{j=1}^{n-1}\kappa_j\frac{\partial u}{\partial
   x_j}\frac{\partial u}{\partial x_j}
   =2B_0(\nabla u,\nabla u).
 \end{equation*}
\end{proof}
In the proof of Lemma \ref{lem:4.1}, we also need the following.
\begin{lemma}
 \label{lem:6.2}
 There exists a constant $\Cl[c]{c-7}>0$ depending only on $\Omega$ such that
 \begin{equation*}
  \sup_{\Omega\times[\varepsilon^2,\infty)}
   \varepsilon|\nabla u^\varepsilon|\leq \Cr{c-7}
 \end{equation*}
 for all $1>\varepsilon>0$.
\end{lemma}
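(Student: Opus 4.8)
The plan is to obtain an $\varepsilon$-independent bound on $\varepsilon|\nabla u^\varepsilon|$ by rescaling the equation so that the rescaled function satisfies a uniformly parabolic equation with bounded coefficients, and then invoking interior and boundary parabolic gradient estimates. First I would fix a point $(x_0,t_0)\in\overline\Omega\times[\varepsilon^2,\infty)$ and introduce the parabolic rescaling $\tilde u(y,s):=u^\varepsilon(x_0+\varepsilon y,\,t_0+\varepsilon^2 s)$, defined for $y$ in the rescaled domain $\Omega_\varepsilon:=\varepsilon^{-1}(\Omega-x_0)$ and $s\in[-1,0]$ (which is legitimate since $t_0\geq\varepsilon^2$). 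Then $\tilde u$ solves $\partial_s\tilde u=\Delta\tilde u-W'(\tilde u)$, a semilinear parabolic equation with $|\tilde u|\leq 1$ by \eqref{absu2}, and with the Neumann condition $\partial\tilde u/\partial\tilde\nu=0$ on $\partial\Omega_\varepsilon$, where $\tilde\nu$ is the outer normal of the rescaled boundary. Since $W\in C^3$ and $|\tilde u|\leq1$, the nonlinearity $W'(\tilde u)$ is bounded in $L^\infty$ uniformly in $\varepsilon$.

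Next I would apply standard parabolic regularity. In the interior (when $B_{1/2}(x_0/\varepsilon)$ is well inside $\Omega_\varepsilon$), interior $L^p$ or Schauder estimates for $\partial_s\tilde u-\Delta\tilde u=-W'(\tilde u)\in L^\infty$ on the cylinder $B_1\times[-1,0]$ give $\|\nabla\tilde u(\cdot,0)\|_{L^\infty(B_{1/2})}\leq C(n,\|W'\|_{L^\infty([-1,1]})$. Near the boundary, since $\partial\Omega$ is smooth and compact, the rescaled boundary $\partial\Omega_\varepsilon$ flattens as $\varepsilon\to0$: after a further smooth change of variables straightening the boundary, we get a uniformly parabolic equation (with coefficients converging to constant ones, hence bounded in $C^\alpha$ with $\varepsilon$-independent bounds, using that $\|\partial\Omega\|_{C^3}$ controls the curvature of $\partial\Omega_\varepsilon$ up to a factor $\varepsilon$, which is $\leq 1$) with a conormal Neumann condition and bounded right-hand side. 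Boundary parabolic estimates (e.g. from \cite{Gilbarg}-type elliptic-parabolic theory for the Neumann problem, or Lieberman's book) then yield a uniform bound $\|\nabla\tilde u(\cdot,0)\|_{L^\infty}\leq\Cr{c-7}$ with $\Cr{c-7}$ depending only on $\Omega$ (through $n$, the $C^3$ norm of $\partial\Omega$, and $\|W'\|_{L^\infty([-1,1])}$, $\|W''\|_{L^\infty([-1,1])}$), valid for all $1>\varepsilon>0$.

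Unwinding the scaling, $\nabla\tilde u(0,0)=\varepsilon\,\nabla u^\varepsilon(x_0,t_0)$, so $\varepsilon|\nabla u^\varepsilon(x_0,t_0)|\leq\Cr{c-7}$, and since $(x_0,t_0)$ was arbitrary in $\overline\Omega\times[\varepsilon^2,\infty)$ this gives the claim. The main technical obstacle is the boundary case: one must verify that the geometry of $\partial\Omega_\varepsilon$ does not degenerate as $\varepsilon\to0$ and that the coefficients of the straightened Neumann problem stay uniformly bounded (and uniformly parabolic) independently of $\varepsilon$, so that the boundary gradient estimate is genuinely $\varepsilon$-uniform; this is where smoothness and compactness of $\partial\Omega$ — in particular the uniform bound $\Cr{c-2}$ on the radius of the tubular neighbourhood already introduced in Section \ref{sec:3} — are used.
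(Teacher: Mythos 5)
Your proposal is correct and shares the paper's overall strategy (parabolic rescaling to unit scale, then standard parabolic estimates, then undoing the scaling to read off $\varepsilon|\nabla u^\varepsilon|\leq C$), but it takes a different route at the boundary. The paper handles points near $\partial\Omega$ by reflecting $u^\varepsilon$ across the boundary via the tubular neighbourhood map $x\mapsto\tilde x$ already introduced in Section \ref{sec:3}: the reflected function satisfies a uniformly parabolic equation with smooth, $\varepsilon$-uniformly controlled coefficients on a full parabolic cylinder around the boundary point, so only the \emph{interior} estimates of Lady\v{z}enskaja--Solonnikov--Ural'ceva are ever invoked. You instead straighten the boundary and appeal directly to up-to-the-boundary gradient estimates for the conormal (Neumann) problem. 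Both routes are valid; the reflection trick is more self-contained since it needs no boundary regularity theory, whereas your route relies on the standard but heavier $\varepsilon$-uniform boundary Schauder/$L^p$ theory for oblique-derivative parabolic problems. You correctly isolate the one point requiring care in either approach, namely that the rescaled boundary geometry does not degenerate as $\varepsilon\to 0$, which is guaranteed by the uniform tubular neighbourhood radius. A cosmetic remark: as you note, the constant necessarily depends on $W$ as well (through $\sup_{[-1,1]}|W'|$), not only on $\Omega$; this imprecision is already in the lemma's statement and is harmless since $W$ is fixed throughout.
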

\begin{proof}
 [Proof of Lemma \ref{lem:6.2}]
 After the parabolic re-scaling, 
 the interior estimates for $\nabla u^{\e}$ can be obtained by
 the standard argument (see
 Lady{\v{z}}enskaja-Solonnikov-Ural'ceva~\cite{MR0241822}). 
 To show
 the boundary estimates for $\nabla u^{\e}$, we use the reflection
 argument on the tubular neighborhood of the boundary. A reflection
 of $u^{\e}$ satisfies a parabolic
 equation on the tubular neighborhood hence we may apply the interior
 estimates.
 \end{proof}
\begin{proof}
 [Proof of Proposition \ref{prop:4.1}]
  Under the parabolic change of variables
$x\mapsto \frac{x}{\e}$ and $t\mapsto\frac{t}{\e^2}$,
 we continue to use the same notation $u^{\e}$ which we denote by $u$ in the following.
  For $G$ to be chosen, define
  \begin{equation}
  \xi:=\frac{|\nabla u|^2}{2}-W(u)-G(u).
  \label{eq:4.0}
  \end{equation}
  We compute $\partial_t \xi-\Delta\xi$ and obtain
  \begin{equation}
  \begin{split}
  \partial_t\xi-\Delta\xi=&\nabla u\cdot\nabla\partial_t u-(W'+G')\partial_t u-|\nabla^2 u|^2
  -\nabla u\cdot \nabla\Delta u \\ & +(W'+G')\Delta u+(W''+G'')|\nabla u|^2.
  \end{split}
  \label{eq:4.01}
  \end{equation}
  Differentiate the equation \eqref{eq:1.1} after the change of variables with respect to $x_j$,
  multiply $\partial_{x_j}u$ and sum over $j$ to obtain
  \begin{equation}
  \nabla u\cdot\nabla\partial_t u = \nabla u\cdot \nabla\Delta u-W''|\nabla u|^2.
  \label{eq:4.02}
  \end{equation}
  By \eqref{eq:1.1}, \eqref{eq:4.01} and \eqref{eq:4.02}, we obtain
  \begin{equation}
  \partial_t\xi-\Delta\xi=W'(W'+G')-|\nabla^2 u|^2+G''|\nabla u|^2.
  \label{eq:4.03}
  \end{equation}
  Differentiating \eqref{eq:4.0} with respect to $x_j$ and by using the Cauchy-Schwarz
  inequality we have
  \begin{equation}
  \begin{split}
  \sum_{j=1}^n&\big(\sum_{i=1}^n \partial_{x_i} u\,\partial_{x_i x_j} u\big)^2
  =\sum_{j=1}^n (\partial_{x_j}\xi+(W'+G')\partial_{x_j} u)^2 \\
  &=|\nabla\xi|^2+2(W'+G')\nabla\xi\cdot\nabla u+(W'+G')^2 |\nabla u|^2
  \leq |\nabla u|^2|\nabla^2 u|^2.
  \end{split}
  \label{eq:4.04}
  \end{equation}
  On $\{|\nabla u|\neq 0\}$, divide \eqref{eq:4.04} by $|\nabla u|^2$ and substitute into
  \eqref{eq:4.03} to obtain
  \begin{equation}
  \partial_t \xi-\Delta\xi
  \leq -(G')^2-W'G'-\frac{2(W'+G')}{|\nabla u|^2}\nabla\xi\cdot\nabla u
  +G''|\nabla u|^2.
  \label{eq:4.05}
  \end{equation}
  Given $T>0$, by Lemma \ref{lem:6.2}, we have a uniform 
  estimate on $M:=\sup_{t\geq \e^{-2}T/2,\, x\in \e^{-1}\Omega}
   \frac{|\nabla u|^2}{2}$ depending only on $T$ but 
   independent of $0<\e<1$.  
 Let $\phi$ be a smooth function of $t$ such that $\phi(t)= M$ 
 for $t\leq \e^{-2}T/2$, $\phi(t)=0$ for $t\geq \e^{-2}T$, $0\leq
 \phi(t)\leq M$ for all $t$ and $|\phi'|\leq 4 T^{-1}\e^2 M$. 
 Let
 \begin{equation} 
 \tilde\xi:=\xi-\phi\ \ \mbox{and} \ \ G(u):=\e \Big(1-\frac{1}{8}(u-\gamma)^2
 \Big),
\label{eq:4.06}
\end{equation}
where $\gamma$ is as in \eqref{wcond1}. Due to the choice of $G$, we have
\begin{equation}
0<G<\e, \ \ G'W' \geq 0, \ \ G''=-\frac{\e}{4}
\label{eq:4.07}
\end{equation}
for $|u|\leq 1$. Now consider the maximum point of $\tilde\xi$ on $\e^{-1}\Omega
\times[\e^{-2}T/2, \tilde T]$ for any large $\tilde T$. Due to the choice of $M$ 
and $\phi$, $\tilde\xi\leq 0$ for $t=\e^{-2}T/2$. Suppose for a contradiction that
\begin{equation}
\max_{x\in \e^{-1}\overline\Omega,\, t\in [\e^{-2}T,\tilde T]} \xi\geq C \e
\label{eq:4.08}
\end{equation}
for some $C$ to be chosen. Since $\phi=0$ for $t\geq \e^{-2} T$, \eqref{eq:4.08}
implies
\begin{equation}
\max_{x\in \e^{-1}\overline\Omega,\, t\in [\e^{-2}T/2,\tilde T]} \tilde\xi\geq C\e.
\label{eq:4.09}
\end{equation}
Consider a maximum point $(\hat x,\hat t)$ of $\tilde\xi$ of \eqref{eq:4.09}. Note that 
$\hat x\notin \partial\Omega$. Because, if $\hat x\in\partial \Omega$, $\frac{\partial \xi}{\partial
\nu}\geq 0$ while Lemma \ref{lem:4.1} and Remark \ref{rem:4.1} show $\nabla u=0$.
But then $\xi<0$ there and we have a contradiction. Thus $\hat x$ is an interior point.
Furthermore, $\hat t>\e^{-2}T/2$ and thus we have 
\begin{equation}
\partial_t \tilde\xi\geq 0, \ \
\nabla \tilde \xi=\nabla \xi=0, \ \ \mbox{ and} \ \ \Delta\tilde\xi=\Delta\xi\leq 0
\label{eq:4.10}
\end{equation}
at $(\hat x,\hat t)$.
By evaluating \eqref{eq:4.05} at this point, and using \eqref{eq:4.07} and
\eqref{eq:4.10}, we obtain
\begin{equation}
-4T^{-1}\e^2 M\leq G'' |\nabla u|^2<-\frac{\e}{4}2C\e
\label{eq:4.11}
\end{equation}
where the last inequality follows from $|\nabla u|^2\geq 2\tilde\xi$. 
Thus choosing $C$ sufficiently large depending only on $T$ and $M$ which
ultimately depends only on $T$, we obtain a contradiction. Thus we proved
that 
\begin{equation}
\max_{x\in \e^{-1}\overline\Omega,\, t\in [\e^{-2}T/2,\tilde T]} \tilde \xi
\leq C\e.
\label{eq:4.12}
\end{equation}
Note that $\phi=0$ for $t\geq \e^{-2}T$ and $\tilde T$ is arbitrary, and since $G\leq \e$, we obtained the 
desired inequality \eqref{eq:4.2} by choosing $\Cr{c-6}:=C+1$. 
\end{proof}

\begin{corollary}
There exists 
$0<D_0<\infty$ depending only on $\Cr{c-1}, \Cr{c-2}$ and $T$ such that
\begin{equation}
\mu_t^{\e}(B_r (y)\cap \Omega)\leq D_0 r^{n-1}
\label{eq:4.13}
\end{equation}
for all $y\in \overline\Omega$, $0<r\leq\Cr{c-4}/4$, $0<\e<1$ and $t\geq T$. 
\label{densityupper}
\end{corollary}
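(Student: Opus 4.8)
The plan is to derive the density upper bound \eqref{eq:4.13} as a standard consequence of the boundary monotonicity formula (Proposition \ref{Prop:3.1}) combined with the negativity of the discrepancy (Proposition \ref{prop:4.1}). First I would fix $y\in\overline\Omega$, $t\geq T$, and a radius $r\leq\Cr{c-2}/4$, and choose the backward time $s:=t+r^2$ so that $s-t=r^2$. The point of $\rho_1+\rho_2$ (or just $\rho_1$ when $y$ is far from $\partial\Omega$) is that on the ball $B_r(y)\cap\Omega$ one has a lower bound $\rho_1\geq c(n)r^{-(n-1)}$ since $\eta\equiv 1$ there (using $r\leq\Cr{c-2}/4$ so $B_r(y)\subset B_{\Cr{c-2}/4}$-translate where $\eta=1$), while $\rho_2\geq 0$ always; hence
\begin{equation*}
\mu_t^\e(B_r(y)\cap\Omega)\leq c(n)^{-1}r^{n-1}\int_\Omega(\rho_1+\rho_2)\,d\mu_t^\e\Big|_{\text{at time }t,\ s=t+r^2}.
\end{equation*}
So it suffices to bound $\int_\Omega(\rho_1+\rho_2)\,d\mu_{t}^\e$ from above by a constant depending only on $n,\Cr{c-1},\Cr{c-2},T$.

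To get that bound I would integrate the differential inequality \eqref{eq:3.7} from a fixed earlier time $t_0$ to $t$. Choose $t_0:=\max\{T/2,t-1\}$ (or simply $t_0 := t - \min\{1, t/2\}$, keeping $t_0\geq T/2>0$ so the heat kernel is nonsingular on $[t_0,t]$ since $s-t_0 = r^2 + (t-t_0) > 0$). Using Proposition \ref{prop:4.1}, which gives $\xi_{\tau}^\e\leq\Cr{c-6}$ pointwise for $\tau\geq T/2$ with $\Cr{c-6}$ depending only on $T$, the discrepancy term on the right of \eqref{eq:3.7} is controlled:
\begin{equation*}
\int_\Omega\frac{\rho_1+\rho_2}{2(s-\tau)}\,d\xi_\tau^\e\leq\frac{\Cr{c-6}}{2(s-\tau)}\int_\Omega(\rho_1+\rho_2)\,d\mathcal L^n\leq\frac{C(n,\Cr{c-2})\,\Cr{c-6}}{s-\tau},
\end{equation*}
where the last step uses that $\rho_1+\rho_2$ is integrable in $x$ with $\int(\rho_1+\rho_2)\,d\mathcal L^n\leq C(n)$ uniformly (the Gaussians have fixed normalization $(4\pi(s-\tau))^{-(n-1)/2}$ integrated over an $n$-dimensional region of bounded diameter, so the integral is $\leq C(n)(s-\tau)^{1/2}\cdot(\text{const})$ — in any case bounded by a constant times $(s-\tau)^{1/2}$, hence the quotient is bounded). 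After multiplying through by $e^{\Cr{c-3}(s-\tau)^{1/4}}$ (which lies between $1$ and $e^{\Cr{c-3}}$ on the relevant range) and integrating $\tau$ from $t_0$ to $t$, the $\Cr{c-4}$ term contributes $\leq\Cr{c-4}$, and the $(s-\tau)^{-1}\cdot(s-\tau)^{1/2}=(s-\tau)^{-1/2}$ term integrates to something finite since $s-\tau$ stays positive. This yields
\begin{equation*}
\int_\Omega(\rho_1+\rho_2)\,d\mu_t^\e\leq C\Big(\int_\Omega(\rho_1+\rho_2)\,d\mu_{t_0}^\e+1\Big),
\end{equation*}
and the initial term at time $t_0$ is bounded by $\mu_{t_0}^\e(\overline\Omega)\cdot\sup(\rho_1+\rho_2)\leq\Cr{c-1}\cdot C(n,\Cr{c-2})(s-t_0)^{-(n-1)/2}\leq\Cr{c-1}\cdot C(n,\Cr{c-2})$ since $s-t_0\geq r^2$... wait, that is not bounded below independently of $r$. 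The cleaner choice is to bound the $t_0$-term using $s-t_0\geq t-t_0\geq\min\{1,T/2\}$, i.e. by choosing $t_0$ so that $t-t_0$ is bounded below by a fixed positive constant; then $\sup_x(\rho_1(\cdot,t_0)+\rho_2(\cdot,t_0))\leq C(n)(t-t_0)^{-(n-1)/2}\leq C(n,T)$ and the whole thing is bounded by $C(n,\Cr{c-1},T)$. For the case $y\in\Omega\setminus N_{\Cr{c-2}/2}$ one runs the identical argument with \eqref{eq:3.75} and $\rho_2\equiv 0$.

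The main obstacle — really the only subtle point — is the bookkeeping near $t$ small, i.e. making sure $s-\tau$ is bounded away from $0$ uniformly over the integration interval so that no genuine singularity of the backward heat kernel is encountered; this is handled precisely by the restriction $t\geq T$ and the choice $t_0\geq T/2$, together with $s=t+r^2>t$. One must also be slightly careful that the constant $D_0$ ends up depending only on $\Cr{c-1},\Cr{c-2},T$ and not on $\e$: every estimate invoked (Proposition \ref{Prop:3.1}, Proposition \ref{prop:4.1}, the energy bound \eqref{engu2}) is $\e$-uniform, so this is automatic. Finally, since $\mu_t^\e(B_r(y)\cap\Omega)$ is what we bounded and the statement asks exactly for this, we set $D_0$ to be the resulting constant and the proof is complete; note the statement's upper radius $\Cr{c-4}/4$ should be read in light of $\Cr{c-4}$ being comparable to $\Cr{c-2}$, or one simply takes $r\leq\Cr{c-2}/4$ which is all that the monotonicity formula requires (a harmless discrepancy in the constant's name).
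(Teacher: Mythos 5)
Your argument is correct and follows essentially the same route as the paper's proof: set $s=t+r^2$, integrate the boundary monotonicity formula back to a time whose distance from $t$ is bounded below in terms of $T$, control the discrepancy term via Proposition \ref{prop:4.1} together with $\int_{\mathbb R^n}\rho_i\,dx\leq\sqrt{4\pi(s-\tau)}$ (so the relevant point is the integrability of $(s-\tau)^{-1/2}$ over $[t_0,t]$, uniformly in $r$, rather than its boundedness), bound the kernel from below on $B_r(y)$ where $\eta\equiv1$, and bound the initial term using \eqref{engu2}. Your reading of the radius restriction as $r\leq\Cr{c-2}/4$ also agrees with the paper's own proof.
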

\begin{proof}
Let $\Cr{c-6}$ be the constant in Proposition \ref{prop:4.1} corresponding to $T/2$.
Suppose $y\in N_{\Cr{c-2}/2}$. For $\hat t\geq T$ and $0<r\leq \Cr{c-2}/4$, 
set $s:=\hat t+r^2$ in the formulas of $\rho_1$ and $\rho_2$. We then integrate \eqref{eq:3.7}
over $t\in [\hat t-\frac{T}{2}, \hat t]$ to obtain 
\begin{equation}
e^{\Cr{c-3}(s-t)^{\frac14}} \int_{\Omega}
(\rho_1+\rho_2)\, d\mu_t^{\e}(x)\Big|_{t=\hat t -\frac{T}{2}}^{\hat t}
\leq e^{\Cr{c-3}(r^2+T/2)^{\frac14}}\Big( \frac{\Cr{c-4}T}{2}
+\Cr{c-6} \sqrt{4\pi} \int_{\hat t -\frac{T}{2}}^{\hat t} \frac{dt}{\sqrt{
s -t}}\Big)
\label{eq:4.14}
\end{equation}
where we have used \eqref{eq:4.2} and $\int_{{\mathbb R}^n}
\frac{\rho_i}{\sqrt{4\pi(s-t)}}\,dx\leq 1$. The right-hand side of
\eqref{eq:4.14} may be estimated in terms of a constant depending
only on $\Cr{c-1},\Cr{c-2}$ and $T$. Using $\eta(x-y)=1$
for $x\in B_r(y)$, we have
\begin{equation}
\begin{split}
\frac{e^{-\frac14}}{(4\pi)^{\frac{n-1}{2}} r^{n-1}} \mu_{\hat t}^{\e}(B_r(y)\cap \Omega)
& \leq \int_{B_r(y)\cap \Omega} \frac{e^{-{\frac14}}}{(4\pi r^2)^{\frac{n-1}{2}}}
\, d\mu_{\hat t}^{\e}(x) \\ & \leq 
\int_{B_r(y)\cap \Omega} \frac{e^{-\frac{|x-y|^2}{4r^2}}}{(4\pi r^2)^{\frac{n-1}{2}}}
\, d\mu_{\hat t}^{\e}(x)  \leq
e^{\Cr{c-3}(s-\hat t)^{\frac14}} \int_{\Omega}\rho_1\, d\mu_{\hat t}^{\e}.
\end{split}
\label{eq:4.15}
\end{equation}
On the other hand,
\begin{equation}
\begin{split}
e^{\Cr{c-3}(s-\hat t +\frac{T}{2})^{\frac14}}\int_{\Omega} (\rho_1+\rho_2)\, 
d\mu_{\hat t-\frac{T}{2}}^{\e}(x) & \leq 
e^{\Cr{c-3}(r^2+\frac{T}{2})^{\frac14}} \int_{\Omega} \frac{2}{(4\pi (r^2+\frac{T}{2}))^{
\frac{n-1}{2}}}\, d\mu_{\hat t-\frac{T}{2}}^{\e}(x) \\
&\leq 2 e^{\Cr{c-3}(\Cr{c-4}^2+T)^{\frac14}} (2\pi T)^{\frac{1-n}{2}} \Cr{c-1}.
\end{split}
\label{eq:4.16}
\end{equation}
Combining \eqref{eq:4.14}-\eqref{eq:4.16}, we obtain \eqref{eq:4.13} with 
an appropriate constant $D_0$ depending only on $\Cr{c-1},\Cr{c-2}$ and $T$.
The case of $y\in \Omega\setminus N_{\Cr{c-2}/2}$ can be proved using \eqref{eq:3.75}.
\end{proof}
\section{Convergence of the energy measures}
\label{sec:5}
In this section we prove that there exists a family of Radon measures
$\{\mu_t\}_{t\geq0}$ such that after taking some subsequence,
$\mu_t^{\varepsilon_i}\rightharpoonup\mu_t$ as $i\rightarrow\infty$ for
all $t\geq0$ on $\overline\Omega$. Note that we want  consider up to the
boundary convergence of $\mu_t^{\varepsilon}$, so we take a test function
which does not vanish near $\partial \Omega$ in general. 

\begin{lemma}[Semidecreasing properties]
 \label{lem:5.1}
 For all $\phi\in C^2(\overline\Omega)$ with $\phi\geq0$ on $\overline\Omega$, 
 we have
  \begin{equation*}
  \int_\Omega\phi\,d\mu_t^\varepsilon-\Cr{c-1}\|\phi\|_{C^2(\overline\Omega)} t
 \end{equation*}
 is monotone decreasing with respect to $t\geq0$ for all $0<\varepsilon<1$.
\end{lemma}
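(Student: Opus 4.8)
The plan is to establish the pointwise‑in‑time bound $\frac{d}{dt}\int_\Omega\phi\,d\mu_t^\varepsilon\le \Cr{c-1}\|\phi\|_{C^2(\overline\Omega)}$ for every $t>0$ and every nonnegative $\phi\in C^2(\overline\Omega)$; integrating in $t$ and using that $t\mapsto\int_\Omega\phi\,d\mu_t^\varepsilon$ is continuous up to $t=0$ (by the regularity \eqref{ubel} and standard parabolic theory) then yields the semidecreasing property on $[0,\infty)$. First I differentiate in time. Running the computation that produces the first line of \eqref{eq:3.2}, now with the time‑independent $\phi$ in place of $\rho_i$ so that the $\partial_t\rho_i$ term drops out — this uses the equation \eqref{eq:1.1} together with a single integration by parts whose only boundary term vanishes because of the Neumann condition $\nabla u^\varepsilon\cdot\nu=0$ on $\partial\Omega$ — one obtains
\[
\frac{d}{dt}\int_\Omega\phi\,d\mu_t^\varepsilon=-\frac1\varepsilon\int_\Omega (f^\varepsilon)^2\phi\,dx+\int_\Omega f^\varepsilon\,\nabla\phi\cdot\nabla u^\varepsilon\,dx,\qquad f^\varepsilon:=-\varepsilon\Delta u^\varepsilon+\frac{W'(u^\varepsilon)}{\varepsilon}.
\]

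The crucial step is the estimate of the cross term $\int_\Omega f^\varepsilon\,\nabla\phi\cdot\nabla u^\varepsilon\,dx$, and here one must resist the temptation to integrate by parts a second time as in the passage from \eqref{eq:3.2n} to Proposition~\ref{Prop:3.3}: for a general $\phi\ge 0$ that second integration by parts produces the boundary integral $\int_{\partial\Omega}(\nabla\phi\cdot\nu)\big(\tfrac\varepsilon2|\nabla u^\varepsilon|^2+\tfrac{W(u^\varepsilon)}{\varepsilon}\big)\,d\mathcal H^{n-1}$, which has no sign (we do not assume $\nabla\phi\cdot\nu\le 0$ on $\partial\Omega$) and is controlled only in time average, by Proposition~\ref{Prop:3.3}, not pointwise in $t$. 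Instead I estimate the cross term directly by Young's inequality, writing $f^\varepsilon\,\nabla\phi\cdot\nabla u^\varepsilon=\big(\varepsilon^{-1/2}f^\varepsilon\sqrt\phi\big)\big(\varepsilon^{1/2}\phi^{-1/2}\,\nabla\phi\cdot\nabla u^\varepsilon\big)$ and invoking the elementary pointwise bound $|\nabla\phi|^2\le 2\|\phi\|_{C^2(\overline\Omega)}\,\phi$ for nonnegative $\phi$ (a second‑order Taylor expansion; note $\nabla\phi$ vanishes on $\{\phi=0\}$, so nothing is lost there). This gives
\[
\Big|\int_\Omega f^\varepsilon\,\nabla\phi\cdot\nabla u^\varepsilon\,dx\Big|\le\frac1\varepsilon\int_\Omega (f^\varepsilon)^2\phi\,dx+\frac{\|\phi\|_{C^2(\overline\Omega)}}{2}\int_\Omega\varepsilon|\nabla u^\varepsilon|^2\,dx.
\]

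Combining the two displays, the dissipation term $-\tfrac1\varepsilon\int_\Omega(f^\varepsilon)^2\phi$, which is nonpositive precisely because $\phi\ge 0$, exactly absorbs the $(f^\varepsilon)^2\phi$‑contribution from Young's inequality, leaving
\[
\frac{d}{dt}\int_\Omega\phi\,d\mu_t^\varepsilon\le\frac{\|\phi\|_{C^2(\overline\Omega)}}{2}\int_\Omega\varepsilon|\nabla u^\varepsilon|^2\,dx\le \|\phi\|_{C^2(\overline\Omega)}\,E^\varepsilon[u^\varepsilon(\cdot,t)]\le \Cr{c-1}\|\phi\|_{C^2(\overline\Omega)},
\]
using $\int_\Omega\varepsilon|\nabla u^\varepsilon|^2\le 2E^\varepsilon[u^\varepsilon(\cdot,t)]$ and the energy bound \eqref{engu2}. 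Hence $\frac{d}{dt}\big(\int_\Omega\phi\,d\mu_t^\varepsilon-\Cr{c-1}\|\phi\|_{C^2(\overline\Omega)}t\big)\le 0$ on $(0,\infty)$, and the assertion follows.

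The only genuine difficulty is the boundary: the cross term must be controlled without ever producing a boundary integral of the energy density, since that quantity is integrable in time but not bounded pointwise for arbitrary test functions. Absorbing the cross term straight into the dissipation via Young's inequality is what makes this possible, and the hypothesis $\phi\ge 0$ enters twice — keeping the dissipation term favorable and supplying the gradient bound $|\nabla\phi|^2\le 2\|\phi\|_{C^2(\overline\Omega)}\phi$. Everything else (the time differentiation, the energy bound, the continuity at $t=0$) is routine.
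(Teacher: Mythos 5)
Your proof is correct and follows essentially the same route as the paper: after one integration by parts (killed at the boundary by the Neumann condition), the paper completes the square in $\partial_t u^\varepsilon=-\varepsilon^{-1}f^\varepsilon$, which is exactly your Young-inequality absorption of the cross term into the dissipation, and both arguments then invoke $|\nabla\phi|^2\le 2\|\phi\|_{C^2(\overline\Omega)}\phi$ and the energy bound \eqref{engu2}. Your remarks on why a second integration by parts must be avoided are a correct reading of the structure, but the mechanism you use is the one already in the paper.
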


\begin{proof}
 [Proof of Lemma \ref{lem:5.1}]
 For $\phi$ with the given assumptions, using the Neumann condition of $u^{\varepsilon}$, we have
 \begin{equation*}
  \begin{split}
   \frac{d}{dt}\int_\Omega\phi\,d\mu_t^\varepsilon
   &=-\int_\Omega\varepsilon
   \nabla\phi\cdot\nabla u^\varepsilon
   \partial_tu^\varepsilon\,dx 
   -\int_\Omega
   \varepsilon\phi(\partial_tu^\varepsilon)^2\,dx \\
   &=\int_\Omega
   \frac{\varepsilon(\nabla\phi\cdot\nabla u^\varepsilon)^2}{4\phi}\,dx
   -\int_\Omega\varepsilon\phi \left(\partial_tu^\varepsilon+\frac{\nabla\phi\cdot\nabla
   u^\varepsilon}{2\phi}\right)^2\,dx \\
   &\leq\int_\Omega\frac{|\nabla\phi|^2}{2\phi}\,d\mu_t^\varepsilon 
   \leq\|\phi\|_{C^2(\overline\Omega)}\Cr{c-1}
  \end{split}
 \end{equation*}
 by \eqref{engu2}.
\end{proof}

 \begin{proposition}
  \label{prop:5.1}
  There exist a family of Radon measures $\{\mu_t\}_{t\geq0}$ and a
  subsequence such that
  $\mu_t^{\varepsilon_i}\rightharpoonup\mu_t$ as $i\rightarrow\infty$
  for all $t\geq0$ on $\overline\Omega$.
 \end{proposition}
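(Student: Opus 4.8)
The plan is to extract the limit family $\{\mu_t\}_{t\ge 0}$ by a diagonal argument over a countable dense set of times, and then upgrade to \emph{all} $t\ge 0$ using the semidecreasing property of Lemma \ref{lem:5.1} together with the uniform mass bound \eqref{engu2}. First I would fix a countable dense subset $\{\phi_k\}_{k\in\N}\subset C^2(\overline\Omega)$ of the separable space $C(\overline\Omega)$ (or $C^0(\overline\Omega)$) consisting of nonnegative functions, together with a countable dense set $Q\subset[0,\infty)$, say $Q=\Q\cap[0,\infty)$. By \eqref{engu2} we have $\mu_t^{\e_i}(\overline\Omega)\le \Cr{c-1}$ for all $t$ and all $i$, so for each fixed $t\in Q$ the sequence $\{\mu_t^{\e_i}\}_i$ is a bounded sequence of Radon measures on the compact set $\overline\Omega$; by weak-$*$ compactness and a diagonal extraction over $Q$ I obtain a single subsequence (not relabeled) such that $\mu_t^{\e_i}\rightharpoonup\mu_t$ on $\overline\Omega$ for every $t\in Q$, defining $\mu_t$ for rational $t$.

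Next I would extend the definition to all $t\ge0$ and prove convergence there. For a fixed nonnegative $\phi\in C^2(\overline\Omega)$, Lemma \ref{lem:5.1} says $t\mapsto \int_\Omega\phi\,d\mu_t^{\e_i}-\Cr{c-1}\|\phi\|_{C^2(\overline\Omega)}t$ is nonincreasing, uniformly in $i$. Hence for $t\in Q$ the limit $g_\phi(t):=\int\phi\,d\mu_t$ exists and $g_\phi(t)-\Cr{c-1}\|\phi\|_{C^2}t$ is nonincreasing on $Q$; a monotone function on a dense set has at most countably many discontinuities and admits left and right limits everywhere, so I can define, for arbitrary $t\ge0$,
\begin{equation*}
\mu_t(\phi):=\lim_{\substack{s\to t^-\\ s\in Q}}\mu_s(\phi)
\end{equation*}
(with the convention $\mu_0(\phi)=\lim_{s\to 0^+,s\in Q}\mu_s(\phi)$), checking this is a positive, bounded linear functional on $C^2(\overline\Omega)$ of norm $\le\Cr{c-1}$, hence by density and Riesz representation extends to a Radon measure $\mu_t$ on $\overline\Omega$ with $\mu_t(\overline\Omega)\le\Cr{c-1}$. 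I would only need to do this for the countable family $\{\phi_k\}$ to pin down $\mu_t$, then note that the monotonicity bound transfers to all of $C^2$ and the convergence below holds for all $\phi\in C(\overline\Omega)$ by a standard $3\e$ approximation argument using the uniform mass bound.

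Finally I would prove $\mu_t^{\e_i}(\phi)\to\mu_t(\phi)$ for \emph{every} $t\ge0$ and every $\phi\in C^2(\overline\Omega)$, $\phi\ge0$; the general $\phi\in C(\overline\Omega)$ then follows by decomposing into $C^2$ pieces (or approximating, since signs can be handled by writing $\phi=\phi^+-\phi^-$ and mollifying, using $\mu_t^{\e_i}(\overline\Omega)\le\Cr{c-1}$). Write $F_i(t):=\mu_t^{\e_i}(\phi)-\Cr{c-1}\|\phi\|_{C^2}t$ and $F(t):=\mu_t(\phi)-\Cr{c-1}\|\phi\|_{C^2}t$; each $F_i$ is nonincreasing and $F_i\to F$ pointwise on $Q$. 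For $t\notin Q$, pick $s',s''\in Q$ with $s'<t<s''$; monotonicity gives $F_i(s'')\le F_i(t)\le F_i(s')$, and letting $i\to\infty$ then $s',s''\to t$ along $Q$ at a point of continuity of the monotone limit $F$ sandwiches $\limsup_i F_i(t)$ and $\liminf_i F_i(t)$ between $F(t^+)$ and $F(t^-)$; at the (countably many) jump points I redefine $\mu_t$ if necessary so that this still gives convergence --- more precisely, the set of $t$ where $F$ is discontinuous is countable, so I add those points to $Q$ at the outset (extract a further diagonal subsequence converging at those finitely-many-per-bounded-interval extra times as well), after which $F$ is continuous at every $t\ge0$ and the sandwich forces $F_i(t)\to F(t)$, i.e.\ $\mu_t^{\e_i}(\phi)\to\mu_t(\phi)$, for all $t\ge0$. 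The main obstacle is the bookkeeping at the jump times: one must ensure the single extracted subsequence works simultaneously for all $t\ge0$ and all test functions, which is why the countable dense family $\{\phi_k\}$ is used to reduce to countably many monotone functions, each contributing a countable set of bad times, whose union is still countable and can be absorbed into $Q$ before the final diagonal extraction.
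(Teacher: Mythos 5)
Your proposal is correct and follows essentially the same route as the paper: diagonal extraction over a countable dense set of times, the semidecreasing property of Lemma \ref{lem:5.1} applied to a countable dense family $\{\phi_k\}\subset C^2(\overline\Omega)$, a sandwich argument at continuity points of the monotone limits, and a further diagonal extraction over the countably many exceptional times. One sentence is slightly off --- re-extracting a subsequence does not make the monotone limit $F$ ``continuous at every $t\ge 0$'' --- but your actual mechanism (absorbing the countable set of jump times into the set over which the diagonal argument is run) is exactly what the paper does, so this does not affect the argument.
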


\begin{proof}
 [Proof of Proposition \ref{prop:5.1}]
 Since we aim to obtain convergence of measures on $\overline\Omega$, we may
 define $\mu_t^{\varepsilon}$ to be zero measure on ${\mathbb R}^n\setminus
 \overline\Omega$ and we may regard $\mu_t^{\varepsilon}$ to be a measure on ${\mathbb R}^n$.
 Let $B_0\subset[0,\infty)$ be a countable, dense subset. Then by the
 compactness of Radon measures and the diagonal argument, there exist a
 family of Radon measures $\{\mu_t\}_{t\in B_0}$ and a subsequence such that
 $\mu_t^{\varepsilon_i}\rightharpoonup \mu_t$  as $i\rightarrow\infty$
 for $t\in B_0$ on ${\mathbb R}^n$. Obviously, $\mu_t$ has a support in $\overline\Omega$
 and note that it may be possible that $\mu_t(\partial\Omega)>0$ in general.

 Let $\{\phi_k\}_{k=1}^\infty\subset C^2(\overline\Omega)$ be a dense subset in
 $C(\overline\Omega)$. Then for each $k\in\N$, there is a countable set
 $B_k\subset[0,\infty)$ such that $\mu_t(\phi_k)$ has continuous
 extension with respect to $t\in[0,\infty)\setminus B_k$ by the
 semidecreasing property of $\mu_t(\phi_k)$. Therefore letting $B=\cup_{k=1}^\infty B_k$,
 which is countable, $\mu_t(\phi_k)$ is continuous extension with
 respect to $t\in[0,\infty)\setminus B$, namely for
 $s\in[0,\infty)\setminus B$, we may define
 \begin{equation}
  \label{eq:5.1}
   \lim_{\substack{t\uparrow s \\ t\in B_0}}\mu_t(\phi_k)
   =\lim_{\substack{t\downarrow s \\ t\in B_0}}\mu_t(\phi_k)
   =:\mu_s(\phi_k).
 \end{equation}

 Let $s\in[0,\infty)\setminus B$
 and let $\{\varepsilon_{i_j}\}_{j=1}^\infty$ be any subsequence
 satisfying 
 \begin{equation}
  \label{eq:5.2}
  \mu_s^{\varepsilon_{i_j}}\rightharpoonup
  \tilde{\mu}_s\quad\text{as}\quad
  j\rightarrow\infty
 \end{equation}
 for some Radon measure $\tilde{\mu}_s$. Then for any $t,t'\in B_0$ with
 $t<s<t'$ and for any $k\in\N$, we have
 \begin{equation*}
  \mu_t^{\varepsilon_{i_j}}(\phi_k)-\Cr{c-1}\|\phi_k\|_{C^2}(t-s)
   \geq\mu_s^{\varepsilon_{i_j}}(\phi_k)
   \geq\mu_{t'}^{\varepsilon_{i_j}}(\phi_k)-\Cr{c-1}\|\phi_k\|_{C^2}(t'-s).
 \end{equation*}
 From \eqref{eq:5.1} and \eqref{eq:5.2}, we have
 \begin{equation*}
  \mu_t(\phi_k)-\Cr{c-1}\|\phi_k\|_{C^2}(t-s)
   \geq\tilde\mu_s(\phi_k)
   \geq\mu_{t'}(\phi_k)-\Cr{c-1}\|\phi_k\|_{C^2}(t'-s)
 \end{equation*}
 hence taking $t\uparrow s$ and $t'\downarrow s$, we find
 $\tilde\mu_s(\phi_k)=\mu_s(\phi_k)$. Therefore
 $\mu_s^{\varepsilon_i}(\phi_k)$ converges to $\mu_s(\phi_k)$ as
 $i\rightarrow\infty$ for all $s\in[0,\infty)\setminus B$. Since
 $\{\phi_k\}_{k=1}^\infty$ is a dense subset in $C(\overline\Omega)$,
 $\mu_s^{\varepsilon_i}\rightharpoonup\mu_s$ as $i\rightarrow\infty$ for
 all $s\in [0,\infty)\setminus B$.

 Finally since $B$ is countable, we may choose a further subsequence (denoted by
 same index) such that $\mu_t^{\varepsilon_i}$ converges to some Radon
 measure $\mu_t$ for all $t\geq0$ by the diagonal argument.
\end{proof}

\section{Vanishing of the discrepancy}
\label{sec:6}
In this section, we prove the vanishing of $L^1$ limit of $|\xi^{\e_i}_t|$ as a sequence of functions 
on $\overline\Omega\times(0,\infty)$. Note that, due to \eqref{engu2} and the 
weak compactness theorem of Radon measures, we may 
choose a subsequence (denoted by the same index) 
such that $|\xi^{\e_i}_t|\, dxdt$ converges to a Radon measure on $\overline\Omega\times[0,\infty)$
locally in time. We show that the limit measure denoted by $|\xi|$ is identically 0, which will
prove the $L^1$ vanishing. 
We also define $d\mu^\varepsilon:=d\mu_t^\varepsilon dt$ and 
the subsequence limit $\mu$ on $\overline\Omega\times[0,\infty)$.
\begin{lemma}
 \label{lem:6.1}
 For any $(x',t')\in\spt\mu$ with $t'>0$ and $x'\in \overline\Omega$,
 there exist a sequence $\{(x_i,t_i)\}_{i=1}^\infty$ and a subsequence
 $\varepsilon_i$ (denoted by same index) such that $t_i>0$, $x_i\in
 \Omega$, $(x_i,t_i)\rightarrow(x',t')$ as $i\rightarrow\infty$ and
 $|u^{\varepsilon_i}(x_i,t_i)|<\alpha$ for all $i\in\N$.
\end{lemma}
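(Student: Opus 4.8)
The plan is to argue by contradiction, using the structure of $\xi_t^\varepsilon$ together with the potential well condition \eqref{wcond2}. Suppose the conclusion fails for some $(x',t')\in\spt\mu$ with $t'>0$ and $x'\in\overline\Omega$. Then there exist $r>0$ and $i_0$ such that for all $i\geq i_0$ we have $|u^{\varepsilon_i}(x,t)|\geq\alpha$ for every $(x,t)\in (B_r(x')\cap\overline\Omega)\times(t'-r,t'+r)$ (shrinking $r$ so that $t'-r>0$). On this parabolic neighborhood the ODE/PDE comparison forces $|u^{\varepsilon_i}|$ to be close to $1$: indeed, by \eqref{wcond2}, $W''\geq\kappa$ on $\alpha\leq|u|\leq1$, so $W(u^{\varepsilon_i})\geq \tfrac{\kappa}{2}(1-|u^{\varepsilon_i}|)^2$ type bounds hold, and one expects $1-|u^{\varepsilon_i}|$ to decay like $e^{-c\,\dist/\varepsilon_i}$ away from a slightly smaller neighborhood. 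The cleanest route is to use Lemma \ref{lem:6.2}, which gives $\varepsilon_i|\nabla u^{\varepsilon_i}|\leq \Cr{c-7}$ on $\Omega\times[\varepsilon_i^2,\infty)$, hence on our neighborhood $\varepsilon_i\,|\nabla u^{\varepsilon_i}|^2/2 \leq \Cr{c-7}|\nabla u^{\varepsilon_i}|$; combined with the energy bound this will show the energy density $d\mu_t^{\varepsilon_i}$ carried by the region $\{|u^{\varepsilon_i}|\geq\alpha\}$ tends to zero on any slightly smaller parabolic cylinder.

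More precisely, the key step is to show $\mu(B_{r/2}(x')\times(t'-r/2,t'+r/2))=0$, contradicting $(x',t')\in\spt\mu$. On $\{|u^{\varepsilon_i}|\geq\alpha\}$ the evolution of $v:=1-|u^{\varepsilon_i}|$ (after the parabolic rescaling $x\mapsto x/\varepsilon_i$, $t\mapsto t/\varepsilon_i^2$) satisfies, using \eqref{eq:1.1} and $W''\geq\kappa$, a differential inequality of the form $\partial_t v-\Delta v\geq \kappa v - (\text{lower order})$ away from zeros of $\nabla u^{\varepsilon_i}$; a barrier argument on the rescaled cylinder — whose size in rescaled variables is $\sim r/\varepsilon_i\to\infty$ — then gives $v\leq C e^{-c\,r/\varepsilon_i}$ on the middle portion, and similarly $|\nabla u^{\varepsilon_i}|\leq C e^{-c\,r/\varepsilon_i}$ there by interior gradient estimates. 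Therefore
\begin{equation*}
\mu^{\varepsilon_i}\big((B_{r/2}(x')\cap\overline\Omega)\times(t'-r/2,t'+r/2)\big)\leq C\,|B_{r/2}|\,(r)\,e^{-c\,r/\varepsilon_i}\longrightarrow 0
\end{equation*}
as $i\to\infty$, where we used $\tfrac{\varepsilon_i}{2}|\nabla u^{\varepsilon_i}|^2+\tfrac{W(u^{\varepsilon_i})}{\varepsilon_i}\leq C\varepsilon_i^{-1}e^{-c r/\varepsilon_i}$ on the middle cylinder. By lower semicontinuity of mass under weak convergence this forces $\mu(B_{r/2}(x')\times(t'-r/2,t'+r/2))=0$, contradicting $(x',t')\in\spt\mu$. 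Note that near $\partial\Omega$ one uses the reflection argument of Lemma \ref{lem:6.2} so that the same interior-type barrier estimates apply uniformly up to the boundary.

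The main obstacle I expect is making the barrier/decay estimate for $1-|u^{\varepsilon_i}|$ rigorous on the set $\{|u^{\varepsilon_i}|\geq\alpha\}$ rather than on all of $\Omega$: the differential inequality for $v=1-|u^{\varepsilon_i}|$ is only valid where $u^{\varepsilon_i}$ stays in the convexity region $\{\alpha\leq|u|\leq1\}$ and where $|\nabla u^{\varepsilon_i}|\neq 0$ (the quantity $|u^{\varepsilon_i}|$ is not smooth at zeros of $u^{\varepsilon_i}$, but on $\{|u^{\varepsilon_i}|\geq\alpha>0\}$ it is smooth, so this is not an issue). The robust fix is to run the comparison directly on $w:=W(u^{\varepsilon_i})+\tfrac{\varepsilon_i^2}{2}|\nabla u^{\varepsilon_i}|^2$ or on $1-(u^{\varepsilon_i})^2$, using that $W$ is $C^3$ and \eqref{wcond2}: on the parabolic cylinder where $|u^{\varepsilon_i}|\geq\alpha$ one checks $1-(u^{\varepsilon_i})^2$ satisfies $\partial_t\psi-\Delta\psi\leq -c\,\psi + C|\nabla u^{\varepsilon_i}|^2$ in rescaled variables, and Lemma \ref{lem:6.2} controls the last term, so a standard exponential barrier closes the argument. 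Handling the boundary case and verifying that the constants $c,C$ depend only on the allowed data ($n$, $W$, $\Omega$, $\Cr{c-1}$, and the fixed $t'$) are the bookkeeping details that remain.
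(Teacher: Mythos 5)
Your overall strategy is the same as the paper's: negate the conclusion to get $|u^{\varepsilon_i}|\geq\alpha$ on a fixed parabolic cylinder about $(x',t')$ for all large $i$, exploit the convexity condition \eqref{wcond2} to show the energy of a smaller cylinder vanishes as $i\to\infty$, and contradict $(x',t')\in\spt\mu$. Where you diverge is the mechanism. The paper never proves pointwise exponential decay: it tests the differentiated equation with $\partial_{x_j}u^{\varepsilon}\phi^2$, uses Lemma \ref{lem:4.1} and the convexity of $\Omega$ to discard the boundary term (which has a favorable sign), arrives at the damped inequality \eqref{eq:6.3}, and applies Gronwall to make the time-integrated gradient energy $O(\varepsilon^2)$; the potential term is then handled by testing with $(u^{\varepsilon}-1)\phi^2$ and $W'(s)(s-1)\geq cW(s)$. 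This integral route is shorter and, crucially, disposes of $\partial\Omega$ through a single signed boundary term instead of a Neumann-compatible barrier or reflection. Your pointwise route proves more than is needed and is strictly harder at the boundary.

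There is also a concrete gap in your ``robust fix.'' For $\psi=1-(u)^2$ you correctly obtain $\partial_t\psi-\Delta\psi\leq -c\psi+2|\nabla u|^2$ in rescaled variables, but Lemma \ref{lem:6.2} only bounds $|\nabla u|$ by a constant; feeding a constant source into this inequality yields $\psi\lesssim C/c$, an $O(1)$ bound rather than exponential decay, so $W(u^{\varepsilon_i})/\varepsilon_i$ is not controlled and the argument does not close. To rescue the pointwise approach you must either (a) first prove decay of the gradient itself --- note that $\Phi=|\nabla u|^2/2$ satisfies $\partial_t\Phi-\Delta\Phi=-|\nabla^2u|^2-W''|\nabla u|^2\leq-2\kappa\Phi$ on $\{\alpha\leq|u|\leq1\}$, the pointwise analogue of \eqref{eq:6.3} and of the computation \eqref{eq:4.03}--\eqref{eq:4.04} --- and then feed that into the $\psi$ inequality, or (b) fix the sign of $u^{\varepsilon_i}$ on the connected cylinder (say $u^{\varepsilon_i}\geq\alpha$) and work with $v=1-u^{\varepsilon_i}$, for which $\partial_t v-\Delta v=W'(1-v)\leq-\kappa v$ by \eqref{wcond2} and \eqref{absu2}, with no gradient term at all; your worry about the non-smoothness of $|u^{\varepsilon_i}|$ is vacuous once the sign is fixed. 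In either case the barrier must still respect the Neumann condition (reflection, or a comparison function with strictly positive outward normal derivative), whereas the paper's integration-by-parts argument avoids this issue entirely.
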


\begin{proof}
 [Proof of Lemma \ref{lem:6.1}]
 For simplicity we omit the subscript $i$. For a contradiction, assume that there exists
 $0<r_0<\sqrt{t'}$ such that 
 \begin{equation}
  \label{eq:6.4}
  \inf_{(B_{r_0}(x')\cap \Omega)\times(t'-r_0^2,t'+r_0^2)}|u^{\varepsilon}|>\alpha
 \end{equation}
 for all sufficiently small $\varepsilon>0$.
 Differentiating \eqref{eq:1.1} with respect to $x_j$, we have
 \begin{equation}
  \label{eq:6.1}
  \varepsilon\partial_t(\partial_{x_j}u^\varepsilon)
   =\varepsilon\Delta(\partial_{x_j}u^\varepsilon)
   -\frac{W''(u^\varepsilon)}{\varepsilon}\partial_{x_j}u^\varepsilon.
 \end{equation}
 Fix $\phi\in C_c^2(B_{r_0}(x'))$ such that 
 \begin{equation*}
  |\nabla\phi|\leq\frac{3}{r_0},\quad \phi\big|_{B_{\frac{r_0}{2}}(x')}\equiv 1.
 \end{equation*}
 Then testing $\partial_{x_j}u^\varepsilon\phi^2$ to \eqref{eq:6.1}, we
 have
 \begin{equation*}
   \begin{split}
    \frac{d}{dt}\int_\Omega
    \frac{\varepsilon}{2}|\nabla u^\varepsilon|^2\phi^2\,dx
    &=-\varepsilon\int_\Omega|\nabla^2u^\varepsilon|^2\phi^2\,dx 
    -2\varepsilon\int_\Omega\partial_{x_j}u^\varepsilon\phi
    (\nabla\partial_{x_j}u^\varepsilon\cdot\nabla\phi)\,dx \\
    &\qquad+\varepsilon\int_{\partial\Omega}
    \partial_{x_j}u^\varepsilon\phi^2
    (\nabla\partial_{x_j}u^\varepsilon\cdot\nu)\,d\sigma 
    -\frac1\varepsilon\int_\Omega
    W''(u^\varepsilon)|\nabla u^\varepsilon|^2\phi^2\,dx.
   \end{split} 
\end{equation*}
 By the H\"older and Young inequalities, Lemma \ref{lem:4.1} and the
convexity of $\Omega$, we have
 \begin{equation*}
   \frac{d}{dt}\int_\Omega
   \frac{\varepsilon}{2}|\nabla u^\varepsilon|^2\phi^2\,dx
   \leq
   -\frac1\varepsilon\int_\Omega
   W''(u^\varepsilon)|\nabla u^\varepsilon|^2\phi^2\,dx
   +\varepsilon\int_{\Omega}
   |\nabla u^{\varepsilon}|^2|\nabla\phi|^2\,dx.
 \end{equation*}
 Using \eqref{eq:6.4} and \eqref{wcond2}, we have
 \begin{equation}
  \label{eq:6.3}
   \frac{d}{dt}\int_\Omega
  \frac{\varepsilon}{2}|\nabla u^\varepsilon|^2\phi^2\,dx
   \leq
   -\frac{2\kappa}{\varepsilon^2}\int_\Omega
   \frac\varepsilon2|\nabla u^\varepsilon|^2\phi^2\,dx
   +\frac{18}{r_0^2}\Cr{c-1}.
 \end{equation}
 Applying the Gronwall inequality to \eqref{eq:6.3}, we obtain
 \begin{equation*}
  \int_\Omega
   \frac{\varepsilon}{2}|\nabla u^\varepsilon|^2\phi^2\,dx
  \leq
  \left(
   \exp\left({\frac{2\kappa}{\varepsilon^2}(t'-r_0^2-t)}\right)
   +\frac{9\varepsilon^2}{r_0^2\kappa}\right)\Cr{c-1}
 \end{equation*}
 for $t'-r_0^2<t<t'+r_0^2$ hence
 \begin{equation}
  \label{eq:6.5}
  \int_{t'-r_0^2}^{t'+r_0^2}\,dt
   \int_\Omega
   \frac{\varepsilon}{2}|\nabla u^\varepsilon|^2\phi^2\,dx
   \rightarrow 0
   \quad \text{as}\ \varepsilon\downarrow0.
 \end{equation}

 By the continuity of $u^\varepsilon$ and \eqref{eq:6.4}, we may assume
 $\alpha\leq u^\varepsilon\leq1$ on
 $(B_{r_0}(x')\cap\Omega)\times(t'-r_0^2,t'+r_0^2)$ without loss of
 generality. Otherwise we have $-1\leq u^\varepsilon\leq-\alpha$ and we
 may argue similarly. Testing $(u^\varepsilon-1)\phi^2$ on
 $\Omega\times(t'-r_0^2,t'+r_0^2)$ to \eqref{eq:1.1} we have
 \begin{equation*}
\begin{split}
 &\quad\frac{\varepsilon}2\int_\Omega(u^\varepsilon-1)^2\phi^2\,dx\bigg|_{t=t'+r_0^2}
 -\frac{\varepsilon}2\int_\Omega(u^\varepsilon-1)^2\phi^2\,dx\bigg|_{t=t'-r_0^2} \\
 &\leq\varepsilon\int_{t'-r_0^2}^{t'+r_0^2}\,dt
 \int_\Omega(u^\varepsilon-1)^2|\nabla\phi|^2\,dx
 -\frac1\varepsilon\int_{t'-r_0^2}^{t'+r_0^2}\,dt
 \int_\Omega W'(u^\varepsilon)(u^\varepsilon-1)\phi^2\,dx
 \end{split} 
\end{equation*}
 hence
 \begin{equation*}
  \begin{split}
   \int_{t'-r_0^2}^{t'+r_0^2}\,dt
   \int_\Omega
   \frac{W'(u^\varepsilon)}{\varepsilon}(u^\varepsilon-1)\phi^2\,dx
   &\leq\varepsilon\int_{t'-r_0^2}^{t'+r_0^2}\,dt 
   \int_\Omega(u^\varepsilon-1)^2|\nabla\phi|^2\,dx \\
   &\quad+\frac{\varepsilon}2\int_\Omega(u^\varepsilon-1)^2\phi^2\,dx\bigg|_{t=t'-r_0^2}.
  \end{split}   
 \end{equation*}
 Using
 \begin{equation*}
  W'(s)(s-1)
   \geq\kappa(s-1)^2
   \geq cW(s)
 \end{equation*}
 for some constant $c>0$ if $\alpha\leq s\leq1$, we may obtain
  \begin{equation*}
  \begin{split}
   \int_{t'-r_0^2}^{t'+r_0^2}\,dt
   \int_\Omega
   \frac{W(u^\varepsilon)}{\varepsilon}\phi^2\,dx
   &\leq\frac\varepsilon{c}\int_{t'-r_0^2}^{t'+r_0^2}\,dt 
   \int_\Omega(u^\varepsilon-1)^2|\nabla\phi|^2\,dx \\
   &\quad+\frac{\varepsilon}{2c}\int_\Omega(u^\varepsilon-1)^2\phi^2\,dx\bigg|_{t=t'-r_0^2}
  \end{split}   
 \end{equation*}
 hence
 \begin{equation}
  \label{eq:6.6}
   \int_{t'-r_0^2}^{t'+r_0^2}\,dt
   \int_\Omega
   \frac{W(u^\varepsilon)}{\varepsilon}\phi^2\,dx
   \rightarrow 0\quad\text{as}\ \varepsilon\downarrow0.
 \end{equation}
 Thus we have by \eqref{eq:6.5} and \eqref{eq:6.6}
 \begin{equation*}
  \int_{t'-r_0^2}^{t'+r_0^2}
   \left(
    \int_\Omega\phi^2\,d\mu_t^\varepsilon
   \right)
   \,dt\rightarrow0 \quad\text{as}\ \varepsilon\downarrow0.
 \end{equation*}
This shows that $(x',t')\notin\spt\mu$, which is contradiction. 
\end{proof}

\begin{lemma}
 \label{lem:6.3}
 There exist $\delta_0$, $r_0$, $\gamma_0>0$ depending only on $\kappa$,
 $W$ and $T>0$ such that the following holds:
 If
 \begin{equation}
  \label{eq:6.9}
  \int_{\overline\Omega}\eta(x-y)\rho_{(y,s)}(x,t)\,d\mu_s(y)<\delta_0
 \end{equation}
 for some $T<t<s<t+\frac{r_0^2}{2}$ and $x\in \overline\Omega$, then
 $(x',t')\not\in\spt\mu$ for all $x'\in B_{\gamma_0r}(x)\cap
 \overline\Omega$, where $t'=2s-t$ and $r=\sqrt{2(s-t)}$.
\end{lemma}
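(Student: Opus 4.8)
The plan is to argue by contradiction, combining the boundary monotonicity formula of Proposition~\ref{Prop:3.1} with the transition‑point lemma, Lemma~\ref{lem:6.1}. The crucial bookkeeping remark is that, since $t'=2s-t$ and hence $t'-s=s-t$, the symmetry of the kernel \eqref{defrho} in its two spatial arguments gives $\rho_{(x',t')}(y,s)=\rho_{(y,s)}(x',t)$; so, up to replacing the centre $x$ by the nearby point $x'$, the left‑hand side of \eqref{eq:6.9} is exactly the truncated Gaussian density \emph{based at $(x',t')$} evaluated on the time slice $\tau=s$ against $\mu_s$. Assume then, for a contradiction, that $(x',t')\in\spt\mu$ for some $x'\in B_{\gamma_0 r}(x)\cap\overline\Omega$. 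I will (i) use this to produce a lower bound $\ge c_0>0$, with $c_0$ depending only on $n,\Omega,W$, for $\int_\Omega(\rho_1+\rho_2)\,d\mu^{\e_i}_{\tau}$ for kernels based at a point just past $(x',t')$ and evaluated near $\tau=t'$; (ii) transport this bound backward to the slice $\tau=s$ by integrating \eqref{eq:3.7}, absorbing the discrepancy error via Proposition~\ref{prop:4.1}; and (iii) let $i\to\infty$ (using Proposition~\ref{prop:5.1}) and contradict \eqref{eq:6.9} for a suitable small choice of $\delta_0,r_0,\gamma_0$.

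For step (i): since $t'>T>0$, Lemma~\ref{lem:6.1} gives, after passing to a further subsequence, points $(x_i,t_i)\to(x',t')$ with $x_i\in\Omega$, $t_i>0$ and $|u^{\e_i}(x_i,t_i)|<\alpha$. For $i$ large $t_i\ge\e_i^2$, so Lemma~\ref{lem:6.2} yields $\e_i|\nabla u^{\e_i}(\cdot,t_i)|\le\Cr{c-7}$ on $\overline\Omega$, whence $|u^{\e_i}(\cdot,t_i)|<\tfrac{1+\alpha}{2}$ on $B_{b_0\e_i}(x_i)\cap\overline\Omega$ with $b_0:=(1-\alpha)/(2\Cr{c-7})$; since $W$ is bounded below by a positive constant on $\{|u|\le\tfrac{1+\alpha}{2}\}$ and $B_{b_0\e_i}(x_i)\cap\Omega$ has Lebesgue measure $\ge c\,\e_i^{\,n}$ (by smoothness and convexity of $\partial\Omega$), the measure $\mu^{\e_i}_{t_i}$ of \eqref{defmu} charges this ball by at least $c\,\e_i^{\,n-1}$. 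Fixing $T_1>0$ and letting $\rho_1,\rho_2$ be the truncated kernels of \eqref{refheat2} based at $(x_i,s_i')$ with $s_i':=t_i+T_1\e_i^2$, on $B_{b_0\e_i}(x_i)$ one has $\eta(x_i-\cdot)\equiv1$ and $\rho_{(x_i,s_i')}(\cdot,t_i)\ge(4\pi T_1\e_i^2)^{-(n-1)/2}e^{-b_0^2/(4T_1)}$, so that the powers of $\e_i$ cancel and (using $\rho_2\ge0$, and \eqref{eq:3.75} with $\rho_2\equiv0$ if $x_i\notin N_{\Cr{c-2}/2}$)
\begin{equation*}
\int_\Omega(\rho_1+\rho_2)\big|_{\tau=t_i}\,d\mu^{\e_i}_{t_i}\ \ge\ c\,(4\pi T_1)^{-\frac{n-1}{2}}e^{-b_0^2/(4T_1)}\ =:\ c_0\ >\ 0 .
\end{equation*}

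For steps (ii)–(iii): for $i$ large $T<s<t_i<s_i'$, so integrating \eqref{eq:3.7} (for the base point $(x_i,s_i')$) over $\tau\in[s,t_i]$ and rearranging gives
\begin{equation*}
e^{\Cr{c-3}(s_i'-s)^{1/4}}\int_\Omega(\rho_1+\rho_2)\big|_{\tau=s}\,d\mu^{\e_i}_{s}\ \ge\ c_0\ -\ \int_s^{t_i}e^{\Cr{c-3}(s_i'-\tau)^{1/4}}\Big(\Cr{c-4}+\int_\Omega\frac{\rho_1+\rho_2}{2(s_i'-\tau)}\,d\xi^{\e_i}_{\tau}\Big)\,d\tau .
\end{equation*}
By Proposition~\ref{prop:4.1} (legitimate since $\tau\ge s>T$) we have $\xi^{\e_i}_\tau\le\Cr{c-6}$, and $\int_\Omega\rho_j\,dx\le C\sqrt{4\pi(s_i'-\tau)}$ (the reflection having bounded Jacobian on $N_{\Cr{c-2}}$); since $s_i'-s\to t'-s=s-t<r_0^2/2$, the subtracted integral is, for $i$ large, bounded by $C(\Cr{c-4}r_0^2+\Cr{c-6}r_0)$, which I make $\le c_0/2$ by choosing $r_0$ small, while $e^{\Cr{c-3}(s_i'-s)^{1/4}}\le2$. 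Hence $\int_\Omega(\rho_1+\rho_2)|_{\tau=s}\,d\mu^{\e_i}_{s}\ge c_0/4$ for $i$ large. Now let $i\to\infty$: the base points $(x_i,s_i')\to(x',t')$ with $t'-s>0$ fixed, so $\rho_1|_{\tau=s}$ and $\rho_2|_{\tau=s}$ converge uniformly on a fixed compact set containing all their supports; using $\mu^{\e_i}_s\rightharpoonup\mu_s$ and $\mu^{\e_i}_s(\overline\Omega)\le\Cr{c-1}$, the inequality passes to the limit:
\begin{equation*}
\int_{\overline\Omega}\eta(x'-y)\rho_{(x',t')}(y,s)\,d\mu_s(y)\ +\ \int_{\overline\Omega}\eta(\tilde y-x')\rho_{(x',t')}(\tilde y,s)\,d\mu_s(y)\ \ge\ \frac{c_0}{4}.
\end{equation*}
For the first integral, $\rho_{(x',t')}(y,s)=\rho_{(y,s)}(x',t)$, and since $|x'-x|<\gamma_0 r$ with $r=\sqrt{2(s-t)}$: on $\{|y-x|\le R\sqrt{s-t}\}$ the Gaussians centred at $x'$ and at $x$ differ by a factor $\le2$ once $\gamma_0$ is small relative to $1/R$ (and there $\eta(x'-\cdot)=\eta(x-\cdot)=1$), so that part is $\le2\delta_0$ by \eqref{eq:6.9}; on $\{|y-x|>R\sqrt{s-t}\}$ a dyadic estimate of the Gaussian tail together with the density bound \eqref{eq:4.13} for $\mu_s$ (valid since $s>T$) makes the contribution $\le C R^{n-1}e^{-cR^2}\le\delta_0$ once $R$ is large; so the first integral is $\le3\delta_0$. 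For the reflection integral, convexity of $\Omega$ gives the \emph{exact} inequality $|\tilde y-x'|\ge|y-x'|$ for $y\in N_{\Cr{c-2}}$ and $x'\in\overline\Omega$ (indeed $|y-x'|^2-|\tilde y-x'|^2=-4\,\dist(y,\partial\Omega)\,\nu(\zeta(y))\cdot(\zeta(y)-x')\le0$), hence $\rho_{(x',t')}(\tilde y,s)\le\rho_{(x',t')}(y,s)$, which bounds it by the first integral plus a super‑exponentially small tail, i.e.\ $\le4\delta_0$. Thus $c_0/4\le7\delta_0$; choosing $\delta_0:=c_0/56$ (after which $R$, then $\gamma_0$, then $r_0$ are fixed) yields the contradiction $c_0/4\le c_0/8$, so $(x',t')\notin\spt\mu$.

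The step I expect to be the main obstacle is (i): the whole argument works only because the backward heat kernel at scale $\e_i$ has height $\sim\e_i^{-(n-1)}$, exactly cancelling the $\e_i^{\,n-1}$ of energy that Lemma~\ref{lem:6.1} and the gradient bound of Lemma~\ref{lem:6.2} force into an $\e_i$‑ball around a transition point, leaving the $\e_i$‑independent constant $c_0$; executing this cleanly, including for transition points lying in $N_{\Cr{c-2}/2}$, is the delicate part. The remaining estimates — the discrepancy error in (ii) via Proposition~\ref{prop:4.1}, and the Gaussian‑tail and reflection comparisons in (iii) via \eqref{eq:4.13} and convexity — are routine but require care in the ordering of the choices of $\delta_0,r_0,\gamma_0$.
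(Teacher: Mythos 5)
Your proposal is correct and follows essentially the same route as the paper: contradiction via Lemma \ref{lem:6.1}, an $\varepsilon_i$-scale lower bound using Lemma \ref{lem:6.2} and the positivity of $W$ near a transition point, backward transport by the boundary monotonicity formula with the discrepancy controlled by Proposition \ref{prop:4.1}, the convexity inequality $|\tilde y-x'|\ge|y-x'|$ to absorb the reflected kernel, and a translation/rescaling comparison of Gaussian kernels against \eqref{eq:6.9}. The only real difference is that you prove the kernel-recentering estimate by hand (near/far splitting with the density bound \eqref{eq:4.13}) where the paper invokes Lemma \ref{lem:A.1}(3); the ordering of the choices of $\delta_0$, $R$, $\gamma_0$, $r_0$ you give is consistent.
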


\begin{proof}
 [Proof of Lemma \ref{lem:6.3}]
 In the following we assume $x'\in N_{\Cr{c-2}/2}$. The proof for the
 case $x'\in \Omega\setminus N_{\Cr{c-2}/2}$ may be carried out using
 $\eqref{eq:3.75}$ in place of $\eqref{eq:3.7}$.  Let us assume
 $(x',t')\in\spt\mu$ for a contradiction. Then by Lemma \ref{lem:6.1}
 there exists a sequence $\{(x_i,t_i)\}_{i=1}^\infty$ such that
 $(x_i,t_i)\rightarrow (x',t')$ as $i\rightarrow\infty$ and
 $|u^{\varepsilon_i}(x_i,t_i)|<\alpha$ for all $i\in\N$. Put
 $r_i:=\gamma_0\varepsilon_i$, where $\gamma_0>0$ will be chosen later,
 and $T_i:=t_i+r_i^2$. Then
 \begin{equation}
  \label{eq:6.6s}
   \begin{split}
    &\quad\int_{B_{r_i}(x_i)}
    \eta(y-x_i)\rho_{(x_i,T_i)}(y,t_i)\,d\mu_{t_i}^{\varepsilon_i}(y) \\
    &\geq
    \frac{1}{(4\pi r_i^2)^\frac{n-1}{2}}    
    \int_{B_{r_i}(x_i)}
    \eta(y-x_i)
    \exp\left(-\frac{|y-x_i|^2}{4r_i^2}\right)
    \frac{W(u^{\varepsilon_i}(y,t_i))}{\varepsilon_i}
    \,dy.
   \end{split} 
 \end{equation}
 For $y\in B_{r_i}(x_i)$,
 \begin{equation*}
  |u^{\varepsilon_i}(y,t_i)|
   \leq \gamma_0\sup_{x\in\Omega}
   \varepsilon_i|\nabla u^{\varepsilon_i}(x,t_i)|
   +|u^{\varepsilon_i}(x_i,t_i)|
   \leq \Cr{c-7}\gamma_0+\alpha,
 \end{equation*}
 where $\Cr{c-7}$ is a constant given by Lemma \ref{lem:6.2}. Thus for
 sufficiently small $\gamma_0>0$ and $y\in B_{r_i}(x_i)$, we have
 $W(u^{\varepsilon_i}(y,t_i))\geq c$ for some $c>0$. Thus for all
 sufficiently large $i$, we may obtain from \eqref{eq:6.6s}
 \begin{equation*}
  \int_{B_{r_i}(x_i)}
  \eta(y-x_i)\rho_{(x_i,T_i)}(y,t_i)\,d\mu_{t_i}^{\varepsilon_i}(y) \\
  \geq
   \frac{c}{(4\pi \gamma_0^2)^\frac{n-1}{2}\varepsilon_i^n}    
   \int_{B_{r_i}(x_i)}
   \exp\left(-\frac{|y-x_i|^2}{4r_i^2}\right)\,dy
   \geq \Cl[c]{c-8}
 \end{equation*}
 for some constant $\Cr{c-8}>0$.
 By \eqref{eq:3.7} and \eqref{eq:4.2} we have
 \begin{equation*}
  \begin{split}
   \Cr{c-8}
   &\leq
   \int_{\Omega}
   (\eta(y-x_i)\rho_{(x_i,T_i)}(y,t_i)
   +\eta(\tilde{y}-x_i)\tilde\rho_{(x_i,T_i)}(y,t_i))
   \,d\mu_{t_i}^{\varepsilon_i}(y) \\
   &\leq
   e^{
   \Cr{c-3}(T_i-s)^\frac14} 
   \int_{\Omega}
   (\eta(y-x_i)\rho_{(x_i,T_i)}(y,s)
   +\eta(\tilde{y}-x_i)\tilde\rho_{(x_i,T_i)}(y,s))   
   \,d\mu_{s}^{\varepsilon_i}(y) \\
   &\quad +\int_s^{t_i}
 e^{
   \Cr{c-3}(T_i-\tau)^\frac14}\Big(\Cr{c-4}+\frac{\sqrt{4\pi}\Cr{c-6}}{\sqrt{T_i-\tau}}\Big)
   \,d\tau.
  \end{split} 
 \end{equation*}
 Letting $i\rightarrow\infty$, we have
 \begin{equation}
  \begin{split}
   \Cr{c-8}
   &\leq
   e^{
   \Cr{c-3}(t'-s)^\frac14}
   \int_{\overline\Omega}
   (\eta (y-x')\rho_{(x',t')}(y,s)
   +\eta(\tilde{y}-x')\tilde\rho_{(x',t')}(y,s))   
   \,d\mu_{s}(y) \\
   &\quad +\int_s^{t'}
   e^{
   \Cr{c-3}(t'-\tau)^\frac14}\Big(\Cr{c-4}+\frac{\sqrt{4\pi}\Cr{c-6}}{\sqrt{t'-\tau}}\Big)\,d\tau.
  \end{split} 
  \label{eq:6s1}
 \end{equation}
 Since $t'-s=s-t=\frac{r^2}{2}$, we may choose sufficiently
 small $r_0$ such that $s-t<r_0^2/2$ implies
 \begin{equation}
  \int_s^{t'}
   e^{
   \Cr{c-3}(t'-\tau)^\frac14}\Big(\Cr{c-4}+\frac{\sqrt{4\pi}\Cr{c-6}}{\sqrt{t'-\tau}}\Big)\,d\tau
   \leq\frac{\Cr{c-8}}2, \ \ 
   e^{\Cr{c-3}(t'-s)^\frac14}
   \leq 2.
   \label{eq:6s2}
 \end{equation}
 By the convexity of $\Omega$, we have $|y-x'|\leq |{\tilde y}-x'|$ for $y,{\tilde y}\in B_{\Cr{c-2}/2}
(x')\subset N_{\Cr{c-2}}$, thus
 considering \eqref{etadef} as well, we have 
 \begin{equation}
   \eta(\tilde{y}-x')\tilde\rho_{(x',t')}(y,s)   
   \leq
   \eta(y-x')\rho_{(x',t')}(y,s).
   \label{eq:6s3}
  \end{equation}
Combining \eqref{eq:6s1}-\eqref{eq:6s3} and putting $\delta_0:=\frac{\Cr{c-8}}{32}$, we have
 \begin{equation}
  \label{eq:6.8}
   4\delta_0
   \leq\int_{\overline\Omega}
   \eta(y-x')\rho_{(x',t')}(y,s)
  \,d\mu_{s}(y).
 \end{equation}

 Now we assume \eqref{eq:6.9}. Then for any $\delta>0$ we may take
 $\gamma_1>0$ as in Lemma \ref{lem:A.1} (note also Corollary \ref{densityupper}) such that
 \begin{equation*}
  \begin{split}
   \int_{\overline\Omega}
   \eta(y-x')\rho_{(x',t')}(y,s)
   \,d\mu_{s}(y)
   &=\int_{\overline\Omega}
   \eta(y-x')\rho_{x'}^{r}(y)
   \,d\mu_{s}(y) \\
   &\leq(1+\delta)
   \int_{\overline\Omega}
   \eta(y-x)\rho_{x}^{r}(y)
   \,d\mu_{s}(y)
   +\delta D_0 \\
   &=(1+\delta)
   \int_{\overline\Omega}
   \eta(y-x)\rho_{(y,s)}(x,t)
   \,d\mu_{s}(y)
   +\delta D_0 \\
   &\leq
   \delta_0(1+\delta)+\delta D_0.
  \end{split} 
\end{equation*}
 Choose $\delta>0$ such that
 $\delta_0(1+\delta)+\delta D_0\leq 2\delta_0$. Then we have from
 \eqref{eq:6.8}
 \begin{equation*}
  4\delta_0
   \leq\int_{\overline\Omega}
   \eta(y-x')\rho_{(x',t')}(y,s)
   \,d\mu_{s}(y)
   \leq2\delta_0,
 \end{equation*}
 which is contradiction. Hence we have $(x',t')\not\in\spt\mu$.
\end{proof}

\begin{lemma}[Forward density lower bounds]
 \label{lem:6.4}
 For $T>0$, let $\delta_0(T)>0$ be a constant given in Lemma \ref{lem:6.3}. Then we have
 $\mu (Z^-(T))=0$, where
 \begin{equation*}
   Z^-(T):=\left\{(x,t)\in \spt\mu:
   \limsup_{s\downarrow t}\int_{\overline\Omega}
   \eta(y-x)\rho_{(y,s)}(x,t)\,d\mu_s(y)<\delta_0(T),\ \ t>T\right\}.
 \end{equation*}
\end{lemma}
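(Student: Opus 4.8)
Since $\sup_i\sup_t\mu_t^{\varepsilon_i}(\overline\Omega)\le\Cr{c-1}$, Theorem \ref{theorem2.1} and dominated convergence give $\mu=\mu_t\,dt$ on $\overline\Omega\times[0,\infty)$. Hence, writing $Z^-(T)_t:=\{x:(x,t)\in Z^-(T)\}$, Fubini reduces the claim to $\mu_t(Z^-(T)_t)=0$ for a.e. $t>T$. I would prove this for the a.e.\ time $t\notin B$ with $\mu_t$ rectifiable (Proposition \ref{prop:5.1}, Theorem \ref{theorem2.1}): for such $t$ and $\mu_t$-a.e.\ $x_0$ the measure $\mu_t$ has an approximate tangent plane and density $\theta_t(x_0)>0$ at $x_0$, and $(x_0,t)\in\spt\mu$ (using $\mu_s\rightharpoonup\mu_t$ and lower semicontinuity of mass on open sets). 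The goal is to show that every such $x_0$ has forward density
\[
\limsup_{s\downarrow t}\int_{\overline\Omega}\eta(y-x_0)\rho_{(y,s)}(x_0,t)\,d\mu_s(y)\;\ge\;\tfrac12\Cr{c-8}\;>\;\delta_0(T),
\]
(recall $\delta_0(T)=\Cr{c-8}/32$ from the proof of Lemma \ref{lem:6.3}), which puts no such $x_0$ in $Z^-(T)_t$ and gives $\mu_t(Z^-(T)_t)=0$, hence $\mu(Z^-(T))=0$.

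The constructive ingredient is a pointwise density lower bound obtained exactly as in the proof of Lemma \ref{lem:6.3}, but with the monotonicity formula run in the reverse time direction. Given $x_0\in\spt\mu_t$, Lemma \ref{lem:6.1} produces $(x_i,t_i)\to(x_0,t)$ with $x_i\in\Omega$, $|u^{\varepsilon_i}(x_i,t_i)|<\alpha$; by Lemma \ref{lem:6.2}, $|u^{\varepsilon_i}(y,t_i)|$ stays away from $1$ on $B_{\gamma_0\varepsilon_i}(x_i)$, so that with $T_i:=t_i+(\gamma_0\varepsilon_i)^2$ and $\gamma_0$ small, $\int\eta(y-x_i)\rho_{(x_i,T_i)}(y,t_i)\,d\mu_{t_i}^{\varepsilon_i}(y)\ge\Cr{c-8}$. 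I would then integrate \eqref{eq:3.7} with base point $(x_i,T_i)$ over $[\,T_i-r^2,\,t_i\,]$ for a fixed small $r>0$, controlling the right-hand side by the discrepancy bound \eqref{eq:4.2} and $\int\rho_j/\sqrt{4\pi(T_i-\cdot)}\,dx\le1$, let $i\to\infty$, and dominate the reflected kernel $\rho_2$ by $\rho_1$ via the strict convexity of $\Omega$ as in \eqref{eq:6s3}; the outcome is
\[
\int_{\overline\Omega}\eta(y-x_0)\rho_{(x_0,t)}(y,t-r^2)\,d\mu_{t-r^2}(y)\;\ge\;\tfrac12\bigl(\Cr{c-8}-\Cl[c]{c-9}\,r\bigr)\qquad\text{for all small }r>0,
\]
with $\Cr{c-9}=\Cr{c-9}(n,\Cr{c-1},\Cr{c-2},T)$: a uniform lower bound on the (backward) Gaussian density of $\mu$ at $(x_0,t)$. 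On the other hand, by Lemma \ref{lem:6.3} a point $x_0\in Z^-(T)_t$ carries a forward clearing: there is $\delta'>0$ with $\spt\mu\cap\{(x',t+h):0<h<\delta',\ |x'-x_0|<\gamma_0\sqrt h\}=\emptyset$.

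To finish I would combine these two facts, which should be incompatible on a set of positive $\mu_t$-measure: intuitively, a point whose past Gaussian density is $\ge\tfrac12\Cr{c-8}$ cannot have the surface retreat from it instantaneously. Concretely I would try to upgrade the above into the forward statement by comparing $\int\eta(y-x_0)\rho_{(y,s)}(x_0,t)\,d\mu_s(y)$, for $s=2s'-t\downarrow t$, with the corresponding integral against $\mu_{s'}$ via \eqref{eq:3.7} with base point $(x_0,2s'-t)$ and scale $\sqrt{s'-t}$ — running the argument of Lemma \ref{lem:6.3} from a phase-transition point near $(x_0,2s'-t)$ down to time $t$ — so that all error contributions from $\Cr{c-4}$, \eqref{eq:4.2}, and the factors $e^{\Cr{c-3}(\cdot)^{1/4}}$ vanish as $\varepsilon_i\to0$ and then $s'\downarrow t$, together with the uniform density bound of Corollary \ref{densityupper} to control the triple interchange of limits (the Gaussians concentrating while the underlying measures vary). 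I expect \emph{this} to be the main obstacle: the definition of $Z^-(T)$ tests the measure at the future time $s>t$, whereas the monotonicity formula only propagates lower bounds backward in time; moreover the clearing radius $\gamma_0\sqrt h$ degenerates as $h\downarrow0$, so no fixed-scale clear neighbourhood of $x_0$ survives at times arbitrarily close to $t$, and a purely covering-theoretic route must reckon with this degeneration. Making precise that the ``mass-disappearing'' points $x_0$ nonetheless form a $\mu_t$-null set — via the interplay of the backward lower bound, the clearing parabola, Corollary \ref{densityupper} and a careful choice of base points in \eqref{eq:3.7} — is the delicate heart of the proof.
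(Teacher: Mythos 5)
There is a genuine gap, and you have in fact located it yourself: your argument sets up a \emph{backward} Gaussian density lower bound at points of $\spt\mu$ and then concedes that converting it into the \emph{forward} lower bound needed to empty $Z^-(T)_t$ is ``the delicate heart of the proof.'' That conversion is not a technicality that can be patched --- it cannot be done pointwise. The monotonicity formula \eqref{eq:3.7} only propagates lower bounds backward in time, and at an individual point mass genuinely can disappear forward in time (that is exactly the phenomenon $Z^-$ records); the content of the lemma is only that such points form a $\mu$-null set. So the strategy of proving that $\mu_t$-a.e.\ $x_0$ has forward density $\geq \tfrac12\,(\text{the constant of Lemma \ref{lem:6.3}})$ is aiming at a stronger statement than is true or needed, and your Fubini reduction to per-time-slice statements steers you toward it.

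The missing idea is a covering argument on the space-time measure $\mu=\mu_t\,dt$ that uses the clearing property you already identified, but \emph{quantitatively}. One first replaces $Z^-(T)$ by the sets $Z^{\tau}$ of points where the Gaussian integral is below $\delta_0$ for \emph{every} $s\in(t,t+\tau)$; since $Z^-\subset\bigcup_m Z^{\tau_m}$ it suffices to kill each $Z^\tau$. Lemma \ref{lem:6.3} then gives the parabolic separation property: if $(x,t)\in Z^{\tau}$ then no point of $Z^{\tau}$ (indeed of $\spt\mu$, for the forward cone) lies in $\{(x',t'):\gamma_0^{-2}|x'-x|^2<|t'-t|<\tau\}$. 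Consequently, if one covers the spatial projection of a bounded piece of $Z^{\tau}$ by balls $B_{r_i}(x_i)$ with $r_i\leq\rho$, centered at points $(x_i,t_i)\in Z^\tau$ and with $\sum_i \omega_n r_i^{n}$ controlled, then every $(x,t)\in Z^\tau$ with $x\in B_{r_i}(x_i)$ is forced to satisfy $|t-t_i|<\gamma_0^{-2}r_i^2$; i.e.\ $Z^\tau$ sits inside slabs $B_{r_i}(x_i)\times(t_i-\gamma_0^{-2}r_i^2,\,t_i+\gamma_0^{-2}r_i^2)$. The density upper bound of Corollary \ref{densityupper} then gives $\mu$ of each slab $\leq 2\gamma_0^{-2}D_0\,r_i^{n+1}$, and summing yields $\mu(Z^\tau\cap(\cdot))\lesssim \rho\to0$. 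The extra power of $r_i$ coming from the quadratic time-thickness of the slabs is exactly what makes the sum vanish; no forward density lower bound at any individual point is ever proved. Your proposal has all the ingredients (the clearing parabola from Lemma \ref{lem:6.3} and the $(n-1)$-density bound $D_0$) but not the mechanism that combines them, so as written the proof is incomplete.
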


\begin{proof}
 [Proof of Lemma \ref{lem:6.4}]
 We do not write out the dependence on $T$ in the following for simplicity, where
 we assume $t>T$.
 Corresponding to $T$, let $\delta_0$, $\gamma_0$ and $r_0$ be constants given by 
 Lemma \ref{lem:6.3}. For $0<\tau<\frac{r_0^2}{2}$ define
 \begin{equation*}
  Z^{\tau}
   :=
   \left\{
    (x,t)\in \spt\mu:
    \int_{\overline\Omega}\eta(y-x)\rho_{(y,s)}(x,t)\,d\mu_s(y)<\delta_0
    \quad\text{for}\ t<s<t+\tau
   \right\}.
 \end{equation*}
 If we take a sequence $\tau_m>0$ with $\lim_{m\rightarrow\infty}\tau_m= 0$, 
 then $Z^-\subset \cup_{m=1}^{\infty}Z^{\tau_m}$. Hence we only need to
 show $\mu(Z^{\tau})=0$. 

 Let $(x,t)\in Z^{\tau}$ be fixed and we define
 \begin{equation*}
  P(x,t)
   :=\left\{(x',t')\in\overline\Omega\times(0,\infty):
      \gamma_0^{-2}|x'-x|^2<|t'-t|<\tau\right\}. 
 \end{equation*}
 We claim that $P(x,t)\cap Z^{\tau}=\emptyset$. Indeed,
 suppose for a contradiction that $(x',t')\in P(x,t)\cap Z^{\tau}$. 
 Assume $t'>t$ and put $s=\frac12(t+t')$. Then
 $t<s\leq t+\tau$, $|x-x'|<\gamma_0\sqrt{|t'-t|}=\gamma_0\sqrt{2(s-t)}$
 and
 \begin{equation*}
  \int_{\overline\Omega}\eta(y-x)\rho_{(y,s)}(x,t)\,d\mu_s(y)<\delta_0.
 \end{equation*}
 Hence by Lemma \ref{lem:6.3},
 $(x',t')\not\in\spt\mu$, which contradicts $(x',t')\in
 Z^{\tau}$. If $t'< t$, by the similar
 argument, we obtain $(x,t)\notin
 \spt\mu$ which is a contradiction. This proves $P(x,t)\cap Z^{\tau}=\emptyset$.

 For a fixed $(x_0,t_0)\in\overline\Omega\times(T,\infty)$, define
 \begin{equation*}
 Z^{\tau,x_0,t_0}
 :=Z^{\tau}\cap\left(B_{\frac{\gamma_0}2\sqrt{\tau}}(x_0)\times(t_0-\frac{\tau}{2},t_0+\frac{\tau}{2})\right).
 \end{equation*}
 Then $Z^{\tau}$ is a countable union of $Z^{\tau,x_m,t_m}$ with
 $(x_m,t_m)$ spaced appropriately. Hence we only need to show that
 $\mu(Z^{\tau,x_0,t_0})=0$. Denote $Z^{\tau,x_0,t_0}$ by $Z'$. For
 $0<\rho\leq 1$, we may find a covering of $\pi_\Omega(Z'):=\{x\in
 \overline\Omega\,:\, (x,t)\in Z'\}$ by a collection of balls
 $\{B_{r_i}(x_i)\}_{i=1}^\infty$, where $(x_i,t_i)\in Z'$,
 $r_i\leq\rho$, so that
 \begin{equation}
  \sum_{i=1}^\infty\omega_nr_i^n
   \leq c(n)\mathscr{L}^n(B_{\frac{\gamma_0}{2}\sqrt{\tau}}(x_0))).
   \label{eq6.4sub}
 \end{equation}
 For for such covering, we find
 \begin{equation*}
  Z'\subset
   \bigcup_{i=1}^\infty
   B_{r_i}(x_i)\times\left(t_i-r_i^2\gamma_0^{-2},t_i+r_i^2\gamma_0^{-2}\right).
 \end{equation*}
 Indeed, if $(x,t)\in Z'$, then $x\in B_{r_i}(x_i)$ for some
 $i\in\N$. Since $P(x_i,t_i)\cap Z^{\tau}=\emptyset$, we
 have
 \begin{equation*}
  |t-t_i|\leq |x-x_i|^2\gamma_0^{-2}<r_i^2\gamma_0^{-2}.
 \end{equation*}

 Therefore we obtain by \eqref{eq6.4sub}
 \begin{equation*}
  \begin{split}
  \mu(Z')&\leq \sum_{i=1}^{\infty} \mu(B_{r_i}(x_i)\times(t_i-r_i^2 \gamma_0^{-2},t_i+r_i^2\gamma_0^{-2})) 
     \leq\sum _{i=1}^{\infty} 2r_i^2 \gamma_0^{-2} D_0 r_i^{n-1}  \\
     &\leq  2\rho \gamma_0^{-2}D_0 \omega_n^{-1}c(n)\mathscr{L}^n(B_{\frac{\gamma_0}{2}\sqrt{\tau}}(x_0)).
      \end{split}
 \end{equation*} 
 Since $\rho$ is arbitrary, we have $\mu(Z')=0$. This concludes the proof. 
\end{proof}

\begin{proposition}[Vanishing of discrepancy]
 \label{prop:6.1}
 We have $|\xi|=0$ on $\overline\Omega\times(0,\infty)$.
\end{proposition}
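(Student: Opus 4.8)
The plan is to combine the up‑to‑the‑boundary monotonicity formula of Proposition~\ref{Prop:3.1} with the one‑sided discrepancy bound of Proposition~\ref{prop:4.1} and the forward density lower bound of Lemma~\ref{lem:6.4}. The first observation is purely pointwise: $\big|\tfrac{\e}{2}|\nabla u^{\e}|^2-\tfrac{W(u^{\e})}{\e}\big|\le \tfrac{\e}{2}|\nabla u^{\e}|^2+\tfrac{W(u^{\e})}{\e}$, so $|\xi^{\e}_t|\,dx\,dt\le d\mu^{\e}$, and passing to the limit gives $|\xi|\le\mu$ on $\overline\Omega\times[0,\infty)$; in particular $|\xi|\ll\mu$ and $\spt|\xi|\subseteq\spt\mu$. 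It therefore suffices to prove that $|\xi|$ has vanishing $(n+1)$‑dimensional (parabolic) upper density at $\mu$‑a.e.\ point of $\overline\Omega\times(0,\infty)$, since then the absolute continuity $|\xi|\ll\mu$, the density upper bound \eqref{eq:4.13}, and a differentiation/covering argument force $|\xi|=0$.

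The heart of the proof is to pass to the limit in \eqref{eq:3.7} (and in \eqref{eq:3.75} when the base point $y$ lies in $\Omega\setminus N_{\Cr{c-2}/2}$) to obtain a scale‑uniform bound on a Gaussian‑weighted mass of $|\xi|$. Fix $T>0$ and split $\xi^{\e}_t=\xi^{\e,+}_t-\xi^{\e,-}_t$. By Proposition~\ref{prop:4.1}, $\xi^{\e,+}_t\le\Cr{c-6}$ for $t\ge T$, and since $\int_{\R^n}\rho_{(y,s)}(\cdot,t)\,dx=(4\pi(s-t))^{1/2}$ (with an analogous bound, up to a bounded Jacobian, for $\tilde\rho_{(y,s)}$), the positive part contributes at most $C(s-t_1)^{1/2}$ to $\int_{t_1}^{t_2}e^{\Cr{c-3}(s-t)^{1/4}}\int_\Omega\tfrac{\rho_1+\rho_2}{2(s-t)}\,\xi^{\e,+}_t\,dx\,dt$, for a constant $C$ depending only on $n,\Cr{c-1},\Cr{c-2},T$. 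Integrating \eqref{eq:3.7} over $(t_1,t_2)$, moving the negative part to the left, letting $t_2\uparrow s$ (for fixed $\e$ the terminal term $\int_\Omega(\rho_1+\rho_2)(\cdot,t_2)\,d\mu^{\e}_{t_2}\to0$ because $u^{\e}$ is smooth and the kernel carries a compensating factor $(s-t_2)^{1/2}$), and finally letting $\e=\e_i\to0$ (using $\mu^{\e_i}_{t_1}\rightharpoonup\mu_{t_1}$, the weak limit of $\xi^{\e_i}\,dx\,dt$, lower semicontinuity of mass against the nonnegative weight $\tfrac{\rho_1+\rho_2}{2(s-t)}$, and \eqref{eq:4.13} to control $\int_\Omega(\rho_1+\rho_2)(\cdot,t_1)\,d\mu_{t_1}$), one arrives at
\[ \int_{t_1}^{s}\int_{\overline\Omega}\frac{\rho_1+\rho_2}{2(s-t)}\,d|\xi|\;\le\;e^{\Cr{c-3}(s-t_1)^{1/4}}\!\int_{\overline\Omega}(\rho_1+\rho_2)(\cdot,t_1)\,d\mu_{t_1}\;+\;C\bigl((s-t_1)+(s-t_1)^{1/2}\bigr)\;<\;\infty \]
for every $(y,s)$ with $s>t_1\ge T$. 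Taking $t_1=T$, the whole integral $\int_{T}^{s}\int\tfrac{\rho_{(y,s)}+\tilde\rho_{(y,s)}}{s-t}\,d|\xi|$ is finite, hence $\int_{s-r^2}^{s}\int\tfrac{\rho_1+\rho_2}{s-t}\,d|\xi|\to0$ as $r\downarrow0$. Since $\rho_1\gtrsim r^{-(n-1)}$ on $\{|x-y|\le\tfrac r2\}\times\{s-\tfrac{r^2}{2}\le t\le s-\tfrac{r^2}{4}\}$, this forces $r^{-(n+1)}|\xi|\bigl((B_r(y)\cap\overline\Omega)\times(s-r^2,s)\bigr)\to0$; that is, $|\xi|$ has vanishing upper (backward) parabolic $(n+1)$‑density at every $(y,s)$ with $s>T$.

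To finish one concludes $|\xi|=0$ on $\overline\Omega\times(T,\infty)$ and lets $T\downarrow0$. Here $|\xi|\le\mu$ together with \eqref{eq:4.13} makes $|\xi|$ $\sigma$‑finite with respect to the backward parabolic $(n+1)$‑Hausdorff measure, so vanishing upper density at every relevant point yields $|\xi|=0$; Lemma~\ref{lem:6.4} (with the clearing‑out Lemma~\ref{lem:6.3} used in reverse time) enters to provide the matching lower density estimate $\mu\bigl((B_r(y)\cap\overline\Omega)\times(s-r^2,s)\bigr)\ge c\,r^{n+1}$ at $\mu$‑a.e.\ $(y,s)$, which identifies where the discrepancy could a priori concentrate and lets one run the differentiation $\tfrac{d|\xi|}{d\mu}=0$ $\mu$‑a.e.\ cleanly. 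I expect the main obstacle to be the limit passage in the monotonicity formula together with the bookkeeping of the two signs of $\xi^{\e}_t$ — in particular, verifying that the monotonicity defect genuinely dominates the $\tfrac{\rho}{s-t}$‑weighted mass of the \emph{full} limit discrepancy $|\xi|$ (both its already‑bounded positive part and its a priori uncontrolled negative part), and handling the reflected kernel $\rho_2$ uniformly up to $\partial\Omega$, where the strict convexity of $\Omega$ and the special structure of \eqref{eq:3.7}–\eqref{eq:3.75} are what make the argument work.
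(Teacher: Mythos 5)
Your opening reductions are fine ($|\xi|\le\mu$, hence $\spt|\xi|\subset\spt\mu$), and your use of Proposition~\ref{Prop:3.1} together with Proposition~\ref{prop:4.1} to obtain, for each fixed $(y,s)$, a uniform bound on $\iint\frac{\rho_1+\rho_2}{2(s-t)}\,d|\xi|$ is exactly the paper's inequality \eqref{a1}. But the route you take from there does not close. First, finiteness of the Gaussian-weighted integral only controls $|\xi|$ on the paraboloid-shaped region $\{|x-y|\lesssim\sqrt{s-t}\}$: on the full cylinder $B_r(y)\times(s-r^2,s)$ the weight $\frac{\rho_1}{s-t}$ is exponentially small where $|x-y|\sim r$ and $s-t\ll r^2$, and your dyadic slabs $\{|x-y|\le r_k/2\}\times\{s-r_k^2/2\le t\le s-r_k^2/4\}$ tile only the paraboloid, so the claimed conclusion $r^{-(n+1)}|\xi|(B_r(y)\times(s-r^2,s))\to0$ does not follow. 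Second, and more seriously, even granting vanishing upper parabolic $(n+1)$-density of $|\xi|$ at every point, this does not imply $|\xi|=0$ unless $|\xi|$ is carried by a set of $\sigma$-finite parabolic $(n+1)$-Hausdorff measure. The bounds $|\xi|\le\mu$ and \eqref{eq:4.13} give only a bounded \emph{upper} density, which is not enough ($\mathcal{L}^2$ in the plane has zero upper $\mathcal{H}^1$-density everywhere and is not zero). The matching \emph{lower} cylinder-density bound you attribute to Lemma~\ref{lem:6.4} is not what that lemma provides: it concerns the forward Gaussian density of the single time slices $\mu_s$ as $s\downarrow t$, not the space-time measure of parabolic cylinders, and converting one into the other is an additional nontrivial step that your proposal does not supply.

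The paper avoids both problems by a different mechanism. It integrates \eqref{a1} over $(y,s)$ with respect to $d\mu_s\,ds$ and applies Fubini, exploiting the symmetry of the heat kernel to swap the roles of $(x,t)$ and $(y,s)$: the conclusion is that for $|\xi|$-a.e.\ $(x,t)$ the \emph{forward} quantity $\int_t^{T_2}\frac{ds}{2(s-t)}\int_{\overline\Omega}(\rho_1+\rho_2)\,d\mu_s(y)$ is finite, which forces $h(s)=\int\eta\rho_{(y,s)}(x,t)\,d\mu_s(y)$ to be small along a sequence $s\downarrow t$. A second application of the monotonicity formula together with Lemma~\ref{lem:A.1} (the $\beta=\log(s-t)$ interpolation in \eqref{eq:6.13b}--\eqref{eq:6.15}) upgrades this to $\lim_{s\downarrow t}h(s)=0$, i.e.\ $|\xi|$ is concentrated on $Z^-(T_1)$. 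Lemma~\ref{lem:6.4} (the clearing-out covering argument) then gives $\mu(Z^-(T_1))=0$, and $|\xi|\le\mu$ finishes. This forward-density/clearing-out step is the essential idea your proposal is missing; without it, or without a genuine lower bound on $\mu$ over parabolic cylinders, the differentiation argument you propose cannot be run.
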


\begin{proof}
 [Proof of Proposition \ref{prop:6.1}]
 Due to \eqref{engu}, it is enough to prove $|\xi|=0$ on $\overline\Omega\times(T_1,T_2)$
 for all $0<T_1<T_2<\infty$. In the following we fix $T_1$ and $T_2$. 
 For $y\in N_{\Cr{c-2}/2}$ and $T_2>s>t>T_1$, by \eqref{eq:3.7} and \eqref{eq:4.2} we obtain
 ($\Cr{c-6}$ corresponding to $T_1$)
 \begin{equation*}
\frac{d}{dt}
   \Big(e^{\Cr{c-3}(s-t)^\frac14}
   \int_{\Omega}(\rho_1+\rho_2)\,d\mu^{\varepsilon_i}_t\Big)
   +e^{\Cr{c-3}(s-t)^{\frac14}} \int_{\Omega}\frac{\rho_1+\rho_2}{2(s-t)}\,
   d|\xi^{\e}_t|
    \leq 
   e^{\Cr{c-3}(s-t)^{\frac14}}\big(\Cr{c-4}+\frac{2\Cr{c-6}\sqrt{4\pi}}{(s-t)^{\frac12}}\big).
\end{equation*} 
Integrating over $t\in (T_1,s)$ and taking $i\rightarrow\infty$,
 we obtain
 \begin{equation}
  \iint_{\overline\Omega\times(T_1,s)}
  \frac{\rho_1+\rho_2}{2(s-t)}
   d|\xi|\leq e^{\Cr{c-3}s^{\frac14}}
   \int_{\overline\Omega}(\rho_1+\rho_2)\,d\mu_{T_1}
   +\int_{T_1}^s e^{\Cr{c-3}(s-t)^{\frac14}}\big(\Cr{c-4}+\frac{2\Cr{c-6}\sqrt{4\pi}}{(s-t)^{\frac12}}\big)\, dt.  
  \label{a1}
 \end{equation}
 Note that the right-hand side of \eqref{a1}
 is uniformly bounded for $(y,s)\in N_{\Cr{c-2}/2}\times(T_1,T_2)$ once 
 $T_1$ and $T_2$ are fixed. For $y\in \Omega\setminus N_{\Cr{c-2}/2}$, the 
 similar argument using \eqref{eq:3.75} in place of \eqref{eq:3.7} gives the 
 similar estimate (with $\rho_2=0$). Since the right-hand side of \eqref{a1}
 is bounded uniformly on $\overline\Omega\times(T_1,T_2)$, integration of \eqref{a1} over
 $(y,s)\in\overline\Omega\times(T_1,T_2)$ with respect to $d\mu_s ds$ shows
 that 
 \begin{equation}
  \label{eq:6.11}
  \int_{T_1}^{T_2}ds\int_{\overline\Omega}d\mu_s(y)
  \iint_{\overline\Omega\times(T_1,s)}
 \frac{\rho_1+\rho_2}{2(s-t)}
  d|\xi|(x,t) 
  \end{equation}
  is finite. 
 By the Fubini theorem, \eqref{eq:6.11} is turned
 into
 \begin{equation*}
   \iint_{\overline\Omega\times(T_1,T_2)}d|\xi|(x,t)
   \int_{t}^{T_2}ds\int_{\overline\Omega}
   \frac{\rho_1+\rho_2}{2(s-t)}
   \,d\mu_s(y).
\end{equation*}
Thus we have
 \begin{equation}
  \label{eq:6.12}
   \int_{t}^{T_2}\frac{1}{2(s-t)}\,ds
  \int_{\overline\Omega}
  \rho_1+\rho_2 \,d\mu_s(y)
  <\infty
 \end{equation}
 for $|\xi|$-almost all $(x,t)\in\overline\Omega\times(T_1,T_2)$. We next prove that
 for $|\xi|$-almost all $(x,t)$, 
 \begin{equation}
  \lim_{s\downarrow t}\int_{\overline\Omega}\rho_1\,d\mu_s(y)=0. 
  \label{eq:6.12s}
 \end{equation}
 For $t<s$, we define $\beta:=\log(s-t)$ and
 \begin{equation*}
  h(s):=\int_{\overline\Omega}\rho_1\,d\mu_s(y).
 \end{equation*}
 Then \eqref{eq:6.12} is translated into
 \begin{equation}
  \int_{-\infty}^{\log(T_2-s)}
  h(t+e^\beta)\,d\beta<\infty.
  \label{a2}
 \end{equation}

 Let $0<\theta<1$ be arbitrary for the moment. Due to \eqref{a2}, we may choose a decreasing
 sequence $\{\beta_i\}_{i=1}^\infty$ such that
 $\beta_i\rightarrow-\infty$, $\beta_i-\beta_{i+1}<\theta$ and
 \begin{equation*}
h(t+e^{\beta_i})<\theta
 \end{equation*}
 for all $i$. 
 For any $-\infty<\beta< \beta_1$ fixed, we may choose $i\geq 2$ such that
 $\beta_i\leq\beta<\beta_{i-1}$. We use $\rho_{(y,t+\e^{\beta})}(x,t)=\rho_{(x,t+2
 \e^{\beta})}(y,t+\e^{\beta})$ and use \eqref{eq:3.7} and \eqref{eq:4.2} to obtain
 \begin{equation}
  \begin{split}
  & h(t+e^\beta) 
    =\int_{\overline\Omega} \eta(y-x)\rho_{(y,t+e^{\beta})} (x,t)\, d\mu_{t+e^{\beta}}(y) \\
   &\leq
   \int_{\overline\Omega}\eta(x-y)\rho_{(x,t+2e^{\beta})}(y,t+e^\beta)
   +\eta(x-\tilde{y})\tilde\rho_{(x,t+2e^{\beta})}(y,t+e^\beta)
   \,d\mu_{t+e^{\beta}}(y)\\
   &\leq e^{\Cr{c-3}(2e^\beta-e^{\beta_i})^\frac14}
   \int_{\overline\Omega}\eta(x-y)
   \rho_{(x,t+2e^{\beta})}(y,t+e^{\beta_i}) 
   +\eta(x-\tilde{y})
   \tilde\rho_{(x,t+2e^{\beta})}(y,t+e^{\beta_i})
   \,d\mu_{t+e^{\beta_i}}(y) \\
   &+ \int_{t+e^{\beta_i}}^{t+e^{\beta}}
   e^{\Cr{c-3}(t+2e^{\beta}-\tau)^{\frac14}}\big(\Cr{c-4}+\frac{\Cr{c-6}\sqrt{4\pi}}{
   (t+2e^{\beta}-\tau)^{\frac12}}
   \big)\, d\tau.
  \end{split} 
  \label{eq:6.13b}
\end{equation}
Let us denote the last integral of \eqref{eq:6.13b} as $c(i)$. Note that $c(i)$ can be
made uniformly small (with respect to $i$) if $\theta$ is chosen small. 
By the convexity of $\Omega$, we have $|x-\tilde y|\geq  |x-y|$ for $x\in \overline\Omega$
and $y\in N_{\Cr{c-2}/2}$, thus
 \begin{equation*}
   \eta(x-\tilde{y})
   \tilde\rho_{(x,t+2e^{\beta})}(y,t+e^{\beta_i})
   \leq \eta(x-y)
   \rho_{(x,t+2e^{\beta})}(y,t+e^{\beta_i}).
 \end{equation*}
Hence we obtain
\begin{equation}
  \label{eq:6.13}  
    h(t+e^\beta)  
    \leq2e^{\Cr{c-3}(2R_{i}^2)^\frac14}
    \int_{\overline\Omega}\eta(x-y)
    \rho^{R_i}_{x}(y)
    \,d\mu_{t+e^{\beta_i}}(y) 
   +c(i)
\end{equation}
where $2R_i^2=2e^\beta-e^{\beta_i}$.

 We next show the lower bound of
 $h(t+e^{\beta_i})$. By the assumption of $\beta_i$,
 we have
 \begin{equation}
  \label{eq:6.14}
  \begin{split}
   \theta\geq h(t+e^{\beta_i})
   &=
   \int_{\overline\Omega}\eta(x-y)\rho_{(y,t+e^{\beta_i})}(x,t)
   \,d\mu_{t+e^{\beta_i}}(y) \\
   &=
   \int_{\overline\Omega}\eta(x-y)\rho^{r_{i}}_x(y)\,d\mu_{t+e^{\beta_i}}(y),
  \end{split}
 \end{equation}
where $2r_{i}^2=e^{\beta_i}$. Since $\beta\geq \beta_i$, we have $R_{i}\geq r_{i}$. Also $\beta-\beta_i<\beta_{i-1}-\beta_i<\theta$
implies $R_{i}^2/r_{i}^2<2e^{\theta}-1$ which can be made arbitrarily close to 1 by restricting $\theta$ to be small.
For arbitrary $\delta>0$, we restrict $\theta$ to be sufficiently small using Lemma \ref{lem:A.1} so that
$\frac{R_i}{r_i}<1+\gamma_2$, where $\gamma_2>0$ is
given by Lemma \ref{lem:A.1} corresponding to $\delta>0$. Then we obtain
\begin{equation}
 \label{eq:6.15}
 \int_{\overline\Omega}\eta(x-y)\rho^{R_{i}}_x(y)\,d\mu_{t+e^{\beta_i}}(y)
 \leq (1+\delta)\int_{\overline\Omega}\eta(x-y)\rho^{r_{i}}_x(y)\,d\mu_{t+e^{\beta_i}}(y)
 +\delta D_0
\end{equation}
hence from \eqref{eq:6.13}, \eqref{eq:6.14} and \eqref{eq:6.15} we have
\begin{equation*}
  \begin{split}
    h(t+e^\beta) 
   &\leq2e^{\Cr{c-3}(2R_i^2)^{\frac14}}\Big(
   (1+\delta)\int_{\overline\Omega}\eta(x-y)
   \rho^{r_{i}}_{x}(y)
   \,d\mu_{t+e^{\beta_i}}(y)
   +\delta D_0\Big) +c(i) \\
   &\leq2e^{\Cr{c-3}(2R_i^2)^{\frac14}}
   ((1+\delta)\theta+\delta D_0)+c(i).
 \end{split} 
\end{equation*}
 Since $\delta$ and $\theta$ are arbitrary, 
 above estimate shows
 \begin{equation*}
  \limsup_{\beta\rightarrow -\infty}h(t+e^{\beta})=0\quad
   |\xi|\text{-almost all}\ (x,t)\in\overline\Omega\times(T_1,T_2)
 \end{equation*}
as well as \eqref{eq:6.12s}.  
This proves that $|\xi|((\overline\Omega\times(T_1,T_2))\setminus Z^-(T_1))=0$, since otherwise,
we have $\limsup_{\beta\rightarrow -\infty}h(t+e^{\beta})\geq \delta_0$ on a set of positive measure
with respect to $|\xi|$. Lemma \ref{lem:6.4} shows $\mu(Z^-(T_1))=0$, and since $|\xi|\leq \mu$ by
the definitions of these measures, we have $|\xi|(\overline\Omega\times(T_1,T_2))=0$.
\end{proof}

\section{Proof of main theorems}
In Section \ref{sec:5}, we have seen that there exists a subsequence such that $\mu_t^{\e_i}$ 
converges to $\mu_t$ for all $t\geq 0$. 
In this section we prove that the first variation of the limit varifold is bounded and rectifiable
for a.e$.$ $t\geq 0$. 
On the boundary $\partial \Omega$,
we show that the tangential component of the first variation is absolutely continuous
with respect to $\mu_t$ and prove at the end the desired limiting inequality
\eqref{mainineq}.

For each $u^{\e_i}$, we associate a varifold as follows.
\begin{definition}
For $\phi\in C(G_{n-1}(\overline\Omega))$, define
\begin{equation}
V_t^{\e_i}(\phi):=\int_{\Omega\cap \{|\nabla u^{\e_i}(t,\cdot)|\neq 0\} } \phi(x,I-a^{\e_i}\otimes
a^{\e_i})\, d\mu_t^{\e_i}(x).
\label{defvar}
\end{equation} 
Here, $a^{\e_i}
=\frac{\nabla u^{\e_i}}{|\nabla u^{\e_i}|}$.
\end{definition}
Note that we have $\|V_t^{\e_i}\|=\mu_t^{\e_i}\lfloor_{\{|\nabla u^{\e_i}
(t,\cdot)|\neq 0\}}$. 
We then derive a formula for the first variation of $V_t^{\e_i}$ up to the boundary. 
\begin{lemma} For $g\in C^1(\overline\Omega;{\mathbb R}^n)$, we have
\begin{equation}
\label{eq:rec0}
\begin{split}
\delta V_t^{\e_i}(g)&=\int_{\Omega} (g\cdot\nabla u^{\e_i})\big(\e_i\Delta u^{\e_i}
-\frac{W'}{\e_i}\big)\, dx+\int_{\Omega\cap \{|\nabla u^{\e_i}|\neq 0\}} \nabla g\cdot
(a^{\e_i}\otimes a^{\e_i})\xi^{\e_i}\, dx\\
& +\int_{\partial \Omega}(g\cdot \nu)\big(\frac{\e_i|\nabla u^{\e_i}|^2}{2}+\frac{W}{\e_i}
\big)-\int_{\Omega\cap \{|\nabla u^{\e_i}|=0\}} \nabla g\cdot I\, \frac{W}{\e_i}\, dx.
\end{split}
\end{equation}
\end{lemma}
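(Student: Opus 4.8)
The plan is to compute $\delta V_t^{\e_i}(g)$ directly from its definition \eqref{deffirst} applied to the varifold \eqref{defvar}. By \eqref{defvar}, $V_t^{\e_i}$ is supported on $\Omega\cap\{|\nabla u^{\e_i}|\neq 0\}$ with tangent plane $I-a^{\e_i}\otimes a^{\e_i}$ and weight $d\mu_t^{\e_i}=\e_i|\nabla u^{\e_i}|^2\,dx$ on that set (using Proposition~\ref{prop:4.1} is not needed here; rather, on $\{|\nabla u^{\e_i}|\neq 0\}$ we simply split $\frac{\e_i}{2}|\nabla u^{\e_i}|^2+\frac{W}{\e_i}=\e_i|\nabla u^{\e_i}|^2-\xi_t^{\e_i}$ by \eqref{defxi}). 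Hence
\[
\delta V_t^{\e_i}(g)=\int_{\Omega\cap\{|\nabla u^{\e_i}|\neq 0\}}\nabla g\cdot(I-a^{\e_i}\otimes a^{\e_i})\,\big(\e_i|\nabla u^{\e_i}|^2\big)\,dx.
\]
First I would rewrite $\e_i|\nabla u^{\e_i}|^2\,(I-a^{\e_i}\otimes a^{\e_i})=\e_i|\nabla u^{\e_i}|^2 I-\e_i\nabla u^{\e_i}\otimes\nabla u^{\e_i}$, valid on the set where $a^{\e_i}$ is defined; on $\{|\nabla u^{\e_i}|=0\}$ both terms vanish, so the integral of $\e_i|\nabla u^{\e_i}|^2 I$ may be extended to all of $\Omega$ freely, while the $I$-term must be handled with care as explained below.

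The key step is an integration by parts on the term $-\int_\Omega \e_i(\nabla g\cdot\nabla u^{\e_i}\otimes\nabla u^{\e_i})\,dx=-\int_\Omega \e_i\sum_{j,k}\partial_{x_j}g_k\,\partial_{x_j}u^{\e_i}\,\partial_{x_k}u^{\e_i}\,dx$. Integrating by parts in $x_j$ moves the derivative off $g_k$, producing (i) a boundary term $\int_{\partial\Omega}\e_i(g\cdot\nabla u^{\e_i})(\nabla u^{\e_i}\cdot\nu)\,d\mathcal H^{n-1}$, which vanishes by the Neumann condition $\nabla u^{\e_i}\cdot\nu=0$ on $\partial\Omega$, and (ii) a bulk term $\int_\Omega \e_i(g\cdot\nabla u^{\e_i})\,\Delta u^{\e_i}\,dx+\int_\Omega \e_i\,g_k\,\partial_{x_k}u^{\e_i}\,\sum_j\partial_{x_j x_j}u^{\e_i}$ — more precisely $\int_\Omega \e_i\,(g\cdot\nabla u^{\e_i})\Delta u^{\e_i}\,dx$ together with a term $\int_\Omega \e_i\sum_{j,k}g_k\,\partial_{x_k}u^{\e_i}\,\partial_{x_j x_j}u^{\e_i}$ that reorganizes after a second integration by parts. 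The cleaner route is to integrate by parts treating $\partial_{x_j}u^{\e_i}\partial_{x_j}u^{\e_i}$ as a divergence structure: $\sum_j\partial_{x_j}(\partial_{x_j}u^{\e_i})=\Delta u^{\e_i}$, giving $-\int_\Omega\e_i\nabla g\cdot(\nabla u^{\e_i}\otimes\nabla u^{\e_i})=\int_\Omega\e_i(g\cdot\nabla u^{\e_i})\Delta u^{\e_i}+\int_\Omega\e_i(\nabla u^{\e_i}\cdot\nabla(g\cdot\nabla u^{\e_i}))-\int_{\partial\Omega}\e_i(g\cdot\nabla u^{\e_i})(\nabla u^{\e_i}\cdot\nu)$. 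Then I expand $\nabla(g\cdot\nabla u^{\e_i})$ and recombine the stray $\e_i\nabla u^{\e_i}\otimes\nabla u^{\e_i}\cdot\nabla g$ piece, which cancels half the original term, leaving in the end $-\int_\Omega\e_i\nabla g\cdot(\nabla u^{\e_i}\otimes\nabla u^{\e_i})=\int_\Omega(g\cdot\nabla u^{\e_i})\e_i\Delta u^{\e_i}+\int_\Omega\tfrac{\e_i}{2}|\nabla u^{\e_i}|^2\,\nabla\cdot g$ modulo the boundary term. Combining with the $I$-term: $\int_\Omega\e_i|\nabla u^{\e_i}|^2\,\nabla\cdot g$ restricted appropriately, and using $\mu_t^{\e_i}=\e_i|\nabla u^{\e_i}|^2-\xi_t^{\e_i}$ once more to convert $\int_\Omega\tfrac{\e_i}{2}|\nabla u^{\e_i}|^2 \nabla\cdot g$ and the leftover into the discrepancy terms $\int\nabla g\cdot(a^{\e_i}\otimes a^{\e_i})\xi^{\e_i}$ on $\{|\nabla u^{\e_i}|\neq0\}$ and $-\int\nabla g\cdot I\,\frac{W}{\e_i}$ on $\{|\nabla u^{\e_i}|=0\}$, and finally adding the boundary contribution $\int_{\partial\Omega}(g\cdot\nu)\big(\frac{\e_i|\nabla u^{\e_i}|^2}{2}+\frac{W}{\e_i}\big)$ that arises when one does \emph{not} discard the surface energy but keeps it (the Neumann condition kills the $|\nabla u^{\e_i}|^2$ part of the boundary term from integration by parts, but the bookkeeping of $\mu_t^{\e_i}$ versus $\e_i|\nabla u^{\e_i}|^2$ reinstates a clean boundary term), one obtains exactly \eqref{eq:rec0}.

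The main obstacle is organizing the integration by parts so that the splitting of $\mu_t^{\e_i}$ into $\e_i|\nabla u^{\e_i}|^2$ and $\xi_t^{\e_i}$ is done consistently on the sets $\{|\nabla u^{\e_i}|\neq 0\}$ and $\{|\nabla u^{\e_i}|=0\}$, and keeping track of the single boundary integral: since $\nabla u^{\e_i}\cdot\nu=0$ on $\partial\Omega$, the $\nabla u^{\e_i}\otimes\nabla u^{\e_i}$ boundary term vanishes, so the only surface contribution comes from rewriting $\int_\Omega\frac{\e_i}{2}|\nabla u^{\e_i}|^2\,\nabla\cdot g+\int_\Omega\frac{W}{\e_i}\nabla\cdot g=\int_{\overline\Omega}\nabla\cdot g\,d\mu_t^{\e_i}$ and integrating \emph{that} by parts, which is where $\int_{\partial\Omega}(g\cdot\nu)\,d\mu_t^{\e_i}\lfloor_{\partial\Omega}$ appears — but $\mu_t^{\e_i}$ is absolutely continuous (it has a density $\frac{\e_i}{2}|\nabla u^{\e_i}|^2+\frac{W}{\e_i}$), so its restriction to $\partial\Omega$ as a measure on $\overline\Omega$ is zero, and the boundary term instead comes out of treating $\frac{W}{\e_i}$ separately: $\int_\Omega\frac{W}{\e_i}\nabla\cdot g=-\int_\Omega\nabla(\frac{W}{\e_i})\cdot g+\int_{\partial\Omega}(g\cdot\nu)\frac{W}{\e_i}$, and $\nabla(\frac{W}{\e_i})=\frac{W'}{\e_i}\nabla u^{\e_i}$, which feeds into the $\frac{W'}{\e_i}$ term of the first integral in \eqref{eq:rec0}. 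Carefully matching these pieces — the $\e_i\Delta u^{\e_i}$ from the $\nabla u\otimes\nabla u$ step, the $-\frac{W'}{\e_i}$ from the potential step, the discrepancy matrices, and the single surface term — is the whole content of the lemma; the computation is routine once this bookkeeping is fixed.
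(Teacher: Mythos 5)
Your overall strategy is the same as the paper's (compute $\delta V_t^{\e_i}(g)$ directly from \eqref{deffirst} applied to \eqref{defvar}, integrate by parts, and use the Neumann condition), but the execution has genuine errors. First, your opening display is already false: by \eqref{defvar} and \eqref{defmu} the weight of $V_t^{\e_i}$ is $d\mu_t^{\e_i}=\bigl(\tfrac{\e_i}{2}|\nabla u^{\e_i}|^2+\tfrac{W}{\e_i}\bigr)dx$ restricted to $\{|\nabla u^{\e_i}|\neq 0\}$, not $\e_i|\nabla u^{\e_i}|^2\,dx$, so the correct starting point is $\delta V_t^{\e_i}(g)=\int_{\Omega\cap\{|\nabla u^{\e_i}|\neq0\}}\nabla g\cdot(I-a^{\e_i}\otimes a^{\e_i})\,d\mu_t^{\e_i}$, which differs from what you wrote by $-\int\nabla g\cdot(I-a^{\e_i}\otimes a^{\e_i})\,d\xi^{\e_i}_t$. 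That dropped piece is exactly where the discrepancy terms of \eqref{eq:rec0} originate (its $a^{\e_i}\otimes a^{\e_i}$ part is the second integral of \eqref{eq:rec0}, and its $W/\e_i$ content is what produces, via the divergence theorem, both the $-\tfrac{W'}{\e_i}$ contribution and the term on $\{|\nabla u^{\e_i}|=0\}$); your promise to ``convert the leftover into the discrepancy terms'' later is not a derivation. Second, there is a sign error in your key integration by parts: two applications of the divergence theorem (the first boundary term vanishing by $\nabla u^{\e_i}\cdot\nu=0$) give $-\int_\Omega\e_i\nabla g\cdot(\nabla u^{\e_i}\otimes\nabla u^{\e_i})\,dx=\int_\Omega(g\cdot\nabla u^{\e_i})\,\e_i\Delta u^{\e_i}\,dx-\int_\Omega\tfrac{\e_i}{2}|\nabla u^{\e_i}|^2\,\Div g\,dx+\int_{\partial\Omega}(g\cdot\nu)\tfrac{\e_i}{2}|\nabla u^{\e_i}|^2$, with a minus sign on the $\Div g$ term, not the plus you wrote; with your sign the $\nabla u^{\e_i}\otimes\nabla u^{\e_i}$ contributions cannot cancel against the $\nabla g\cdot I$ part and a spurious term $\int\e_i|\nabla u^{\e_i}|^2\Div g$ survives.

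The boundary bookkeeping is also wrong. You assert that ``the Neumann condition kills the $|\nabla u^{\e_i}|^2$ part of the boundary term'' and that the only surface contribution comes from $W/\e_i$. The Neumann condition kills only the term $\int_{\partial\Omega}\e_i(g\cdot\nabla u^{\e_i})(\nabla u^{\e_i}\cdot\nu)$; the divergence-theorem boundary term $\int_{\partial\Omega}(g\cdot\nu)\tfrac{\e_i}{2}|\nabla u^{\e_i}|^2$ is a trace integral of the smooth density (it has nothing to do with restricting the absolutely continuous measure $\mu_t^{\e_i}$ to $\partial\Omega$, which is indeed zero but irrelevant), it survives, and it is precisely the first half of the boundary integral in \eqref{eq:rec0}. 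As written, your argument loses that boundary term and never actually produces either discrepancy term. The paper organizes all of this cleanly into the two identities \eqref{eq:rec2} (for the $\tfrac{\e_i}{2}|\nabla u^{\e_i}|^2$ part of $\nabla g\cdot I\,d\mu_t^{\e_i}$) and \eqref{eq:rec3} (for the $W/\e_i$ part, which is also where the integral over $\{|\nabla u^{\e_i}|=0\}$ appears), after which the $\e_i\nabla g\cdot(\nabla u^{\e_i}\otimes\nabla u^{\e_i})$ terms cancel and \eqref{eq:rec0} follows; you should restructure your computation along those lines.
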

\begin{proof}
Omit the sub-index $i$. 
We have 
\begin{equation}
\delta V_t^{\e} (g)=\int_{\Omega\cap\{|\nabla u^\e|\neq 0\}} \nabla g(x)\cdot (I
-a^{\e}\otimes a^{\e})\, d\mu_t^{\e}.
\label{eq:rec1}
\end{equation}
Using the boundary condition $\nabla u^\e \cdot \nu=0$ on $\partial\Omega$
and integration by parts, we have
\begin{equation}
\int_{\Omega} \nabla g\cdot I \, \frac{|\nabla u^{\e}|^2}{2}\, dx
=\int_{\partial\Omega}(g\cdot\nu)\frac{|\nabla u^{\e}|^2}{2}+\int_{\Omega}
\nabla g\cdot(\nabla u^{\e}\otimes\nabla u^{\e})+(g\cdot \nabla u^{\e})\Delta u^{\e}\, dx.
\label{eq:rec2}
\end{equation}
Also by integration by parts,
\begin{equation}
\int_{\Omega\cap \{|\nabla u^\e|\neq 0\}} W\nabla g\cdot I\, dx
=-\int_{\Omega\cap \{|\nabla u^\e|=0\}} W\nabla g\cdot I\, dx
-\int_{\Omega}(g\cdot \nabla u^\e)W'\, dx+\int_{\partial\Omega}
(g\cdot \nu)W.
\label{eq:rec3}
\end{equation}
Substituting \eqref{eq:rec2} and \eqref{eq:rec3} into \eqref{eq:rec1} and recalling 
the definition of $\xi^{\e}$, we obtain \eqref{eq:rec0}.
\end{proof}
\begin{prop}
\label{lemma7.2}
For a.e$.$ $t\geq 0$, $\mu_t$ is rectifiable on $\overline\Omega$, and any 
convergent subsequence $\{V_t^{\e_{i_j}}\}_{j=1}^{\infty}$ with 
\begin{equation}
\liminf_{j\rightarrow\infty}\Big\{\big( \int_{\Omega}\e_{i_j}\big(\Delta u^{\e_{i_j}}
-\frac{W'}{\e_{i_j}^2}\big)^2\, dx\big)^{\frac12}+\int_{\partial\Omega}\big(\frac{\e_{i_j}|\nabla u^{\e_{i_j}}|^2}{2}+\frac{W}{\e_{i_j}}\big)\Big\}
<\infty
\label{eq:rec4}
\end{equation}
(evaluated at $t$) converges to the unique varifold $V_t$ associated with $\mu_t$. Moreover we have
\begin{equation}
\|\delta V_t\|(\overline\Omega)<\infty
\label{eq:rec4.5}
\end{equation}
and 
\begin{equation}
\int_0^T\|\delta V_t\|(\overline\Omega)\, dt<\infty
\label{eq:rec4.6}
\end{equation}
for all $T<\infty$. 
\end{prop}
\begin{proof}
Due to the energy inequality, \eqref{surf} and Fatou's lemma, we have \eqref{eq:rec4} for
a.e$.$ $t\geq 0$ for the full sequence. Also for a.e$.$ $t\geq 0$, we have
\begin{equation}
 \lim_{i\rightarrow\infty}\int_{\Omega}|\xi^{\e_i}(t,\cdot)|\, dx=0
 \label{eq:rec5}
 \end{equation}
by Proposition \ref{prop:6.1} and the dominated convergence theorem. 
For such $t\geq 0$, there exists a converging subsequence $\{V_t^{\e_{i_j}}\}_{j=1}^{\infty}$ 
and a limit $V_t$ with \eqref{eq:rec4} satisfied. Then by \eqref{eq:rec0}, \eqref{eq:rec4} and
\eqref{eq:rec5}, we obtain for $g\in C^1(\overline\Omega;{\mathbb R}^n)$
\begin{equation}
\lim_{j\rightarrow\infty}|\delta V_t^{\e_{i_j}}(g)|\leq c(t)(\Cr{c-1}+1)\max_{\overline\Omega}|g|,
\label{eq:rec6}
\end{equation}
where we set $c(t)$ be the quantity \eqref{eq:rec4}. 
By the definition of varifold convergence, we have
\begin{equation}
|\delta V_t(g)|=\lim_{j\rightarrow\infty}|\delta V_t^{\e_{i_j}}(g)|\leq c(t)(\Cr{c-1}+1)\sup_{
\overline\Omega}|g|.
\label{eq:rec7}
\end{equation}
This shows that the total variation 
$\|\delta V_t\|$ is a Radon measure, showing \eqref{eq:rec4.5}. Since $\|V_t^{\e_{i_j}}\|=\mu_t^{\e_{i_j}}$,
we have $\|V_t\|=\mu_t$ which is uniquely determined. A covering argument
using the monotonicity formula (see the proof of \cite[Cor. 6.6]{arXiv:1307.6629}) shows 
\begin{equation}
{\mathcal H}^{n-1}({\rm spt}\, \mu_t)<\infty.
\label{eq:rec7.5}
\end{equation}
By \eqref{eq:rec7.5} (for more detail, see \cite[Prop. 6.11]{arXiv:1307.6629}) and \eqref{eq:rec7}, Allard's rectifiability theorem
shows that $V_t$ is a rectifiable varifold, and in particular, $V_t$ is determined uniquely by $\|V_t\|=\mu_t$.
This proves $\mu_t$ is rectifiable for a.e$.$ $t\geq 0$. The argument up to this point applies equally to 
any converging subsequence with \eqref{eq:rec4} and \eqref{eq:rec5}, thus the uniqueness of the limit varifold
follows. Since $c(t)$ is locally uniformly integrable, Fatou's lemma shows \eqref{eq:rec4.6}.
\end{proof}
We comment that the $\sigma^{-1}V_t\lfloor_{\Omega}\in {\bf IV}_{n-1}(\Omega)$ 
follows from the interior argument of \cite{Tonegawa1} or \cite{arXiv:1307.6629}.
Thus, up to this point, we proved Theorem \ref{theorem2.1} and \ref{theorem2.3}. 
We next prove Theorem \ref{theorem2.5}. 
\begin{prop}
For a.e$.$ $t\geq 0$ such that the claim of Proposition \ref{lemma7.2} holds, define 
$\delta V_t\lfloor_{\partial\Omega}^{\top}$ as in \eqref{deftan}. Then we have
\begin{equation}
\|\delta V_t\lfloor_{\partial\Omega}^{\top}+ \delta V_t\lfloor_{\Omega}\| \ll \|V_t\|
\label{eq:rec8}
\end{equation}
and writing the Radon-Nikodym derivative as
\begin{equation}
h_{b}(t) :=\left\{ 
\begin{array}{ll} - \frac{\delta V_t\lfloor_{\partial\Omega}^{\top}}{\|V_t\|} 
& \mbox{on }\partial \Omega,\\ 
- \frac{\delta V_t\lfloor_{\Omega}}{\|V_t\|}& \mbox{on } \Omega,
\end{array}\right.
\label{eq:rec8.5}
\end{equation}
we have \eqref{hel2} and
\begin{equation}
\int_{\overline\Omega} \phi |h_b|^2\, d\|V_t\|
\leq \liminf_{i\rightarrow\infty}\int_{\Omega} \e_i \big(\Delta u^{\e_i}-\frac{W'}{\e_i^2}\big)^2\phi\, dx
\label{eq:rec9}
\end{equation}
for $\phi\in C (\overline\Omega;{\mathbb R}^+)$. 
\label{lemma7.35}
\end{prop}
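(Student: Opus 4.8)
The plan is to pass to the limit in the first variation identity \eqref{eq:rec0}. Its crucial feature is that if $g\in C^1(\overline\Omega;{\mathbb R}^n)$ satisfies $g\cdot\nu=0$ on $\partial\Omega$, the boundary term in \eqref{eq:rec0} drops, leaving
\begin{equation*}
\delta V_t^{\e_i}(g)=\int_\Omega (g\cdot\nabla u^{\e_i})\,\e_i\Big(\Delta u^{\e_i}-\frac{W'}{\e_i^2}\Big)\,dx+R_i(g),\qquad
|R_i(g)|\le c(n)\,\|\nabla g\|_{L^\infty}\int_\Omega|\xi^{\e_i}_t|\,dx ,
\end{equation*}
where we used that $|\xi^{\e_i}_t|=W(u^{\e_i})/\e_i$ on $\{|\nabla u^{\e_i}|=0\}$. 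First, fix $t$ for which the conclusions of Proposition \ref{lemma7.2} hold, for which $\mu^{\e_i}_t\rightharpoonup\mu_t=\|V_t\|$ on $\overline\Omega$, and (a.e$.$ $t$, by Proposition \ref{prop:6.1} and dominated convergence) for which $\int_\Omega|\xi^{\e_i}_t|\,dx\to0$. Choosing a subsequence with $V^{\e_i}_t\rightharpoonup V_t$ as in Proposition \ref{lemma7.2} and passing to a further subsequence (not relabelled), we may assume that the Radon measures $\Lambda^{\e_i}$ on $\overline\Omega$, $\Lambda^{\e_i}(\psi):=\int_\Omega\psi\,\e_i(\Delta u^{\e_i}-W'/\e_i^2)^2\,dx$, and the ${\mathbb R}^n$-valued Radon measures $\tau^{\e_i}$, $\tau^{\e_i}(g):=\int_\Omega(g\cdot\nabla u^{\e_i})\,\e_i(\Delta u^{\e_i}-W'/\e_i^2)\,dx$, converge weakly on $\overline\Omega$ to some $\Lambda$ and $\tau$ (their masses are bounded by the choice of subsequence, cf$.$ \eqref{eq:rec4}, the Cauchy--Schwarz inequality below and $\e_i|\nabla u^{\e_i}|^2\,dx=d\mu^{\e_i}_t+d\xi^{\e_i}_t$).

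The key quantitative step is the Cauchy--Schwarz estimate: for $\phi\in C^1(\overline\Omega;{\mathbb R}^+)$ and $g\in C^1(\overline\Omega;{\mathbb R}^n)$,
\begin{equation*}
\Big|\int_\Omega \phi\,(g\cdot\nabla u^{\e_i})\,\e_i\Big(\Delta u^{\e_i}-\frac{W'}{\e_i^2}\Big)dx\Big|
\le\Big(\int_\Omega \phi\,|g|^2\,\e_i|\nabla u^{\e_i}|^2\,dx\Big)^{1/2}\Big(\int_{\overline\Omega}\phi\,d\Lambda^{\e_i}\Big)^{1/2}.
\end{equation*}
Because $\e_i|\nabla u^{\e_i}|^2\,dx=d\mu^{\e_i}_t+d\xi^{\e_i}_t$ and $\int_\Omega|\xi^{\e_i}_t|\,dx\to0$, the first factor tends to $(\int_{\overline\Omega}\phi|g|^2\,d\|V_t\|)^{1/2}$ and the second to $(\int_{\overline\Omega}\phi\,d\Lambda)^{1/2}$; since $\tau^{\e_i}(\phi g)\to\tau(\phi g)$, we obtain for all such $\phi,g$
\begin{equation*}
|\tau(\phi g)|\le\Big(\int_{\overline\Omega}\phi\,|g|^2\,d\|V_t\|\Big)^{1/2}\Big(\int_{\overline\Omega}\phi\,d\Lambda\Big)^{1/2}.
\end{equation*}
Taking $\phi\equiv1$ and using density of $C^1(\overline\Omega;{\mathbb R}^n)$ in $L^2(\|V_t\|;{\mathbb R}^n)$ (together with $\|\tau\|(\overline\Omega)<\infty$), the Riesz representation theorem in $L^2$ gives $\tau=-H\|V_t\|$ with $H\in L^2(\|V_t\|;{\mathbb R}^n)$ and $\int_{\overline\Omega}|H|^2\,d\|V_t\|\le\Lambda(\overline\Omega)$; in particular $\tau\ll\|V_t\|$, and the estimate above extends to all $g\in L^2(\|V_t\|;{\mathbb R}^n)$.

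To identify $\tau$ with a part of $\delta V_t$, pass to the limit in \eqref{eq:rec0} for arbitrary $g\in C^1(\overline\Omega;{\mathbb R}^n)$: using $V^{\e_i}_t\rightharpoonup V_t$ (hence $\delta V^{\e_i}_t(g)\to\delta V_t(g)$), $R_i(g)\to0$ and $\tau^{\e_i}(g)\to\tau(g)$, we get $\delta V_t=\tau+\Theta$, where $\Theta$ is a weak limit of $\nu\big(\frac{\e_i}2|\nabla u^{\e_i}|^2+\frac{W}{\e_i}\big){\mathcal H}^{n-1}\lfloor_{\partial\Omega}$. Hence $\Theta$ is a nonnegative measure on $\partial\Omega$ times $\nu$, so it annihilates every continuous vector field tangent to $\partial\Omega$, i.e$.$ its tangential part on $\partial\Omega$ vanishes. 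Consequently $\delta V_t\lfloor_\Omega=\tau\lfloor_\Omega$ and $\delta V_t\lfloor_{\partial\Omega}^{\top}=(\tau\lfloor_{\partial\Omega})^{\top}$, so
\begin{equation*}
\delta V_t\lfloor_{\partial\Omega}^{\top}+\delta V_t\lfloor_\Omega=(\tau\lfloor_{\partial\Omega})^{\top}+\tau\lfloor_\Omega=-h_b\|V_t\|,\qquad h_b:=H\ \text{on}\ \Omega,\ \ h_b:=H-(H\cdot\nu)\nu\ \text{on}\ \partial\Omega .
\end{equation*}
Since $|h_b|\le|H|$, this establishes \eqref{eq:rec8}, \eqref{eq:rec8.5} with $h_b\in L^2(\|V_t\|)$, and $h_b=h(V_t,\cdot)$ on $\Omega$ follows at once; note also $h_b\cdot H=|h_b|^2$ $\|V_t\|$-a.e.

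Finally, for \eqref{eq:rec9}, given $\phi\in C(\overline\Omega;{\mathbb R}^+)$, repeat the construction choosing the subsequence so that in addition $\int_{\overline\Omega}\phi\,d\Lambda^{\e_i}\to\liminf_i\int_\Omega\e_i(\Delta u^{\e_i}-W'/\e_i^2)^2\phi\,dx$ (a diagonal extraction still compatible with \eqref{eq:rec4}, so that the identification of the tangentialized density with $h_b$ persists). Approximating $\phi$ uniformly by nonnegative $C^1$ functions and $h_b$ in $L^2(\|V_t\|)$ by $C^1$ fields, inserting $g=h_b$ into the estimate of the second paragraph and using $\tau(\phi h_b)=-\int_{\overline\Omega}\phi\,h_b\cdot H\,d\|V_t\|=-\int_{\overline\Omega}\phi|h_b|^2\,d\|V_t\|$, we obtain
\begin{equation*}
\int_{\overline\Omega}\phi\,|h_b|^2\,d\|V_t\|\le\int_{\overline\Omega}\phi\,d\Lambda=\liminf_i\int_\Omega\e_i\Big(\Delta u^{\e_i}-\frac{W'}{\e_i^2}\Big)^2\phi\,dx ,
\end{equation*}
which is \eqref{eq:rec9}; letting $\phi\uparrow1$ and using \eqref{engu2} with Fatou's lemma gives \eqref{hel2}. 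I expect the main difficulty to be the absolute continuity of $\tau$ with an $L^2$ density: the only available input is the Cauchy--Schwarz bound, and it is precisely the vanishing of the discrepancy (Proposition \ref{prop:6.1}) that permits replacing $\e_i|\nabla u^{\e_i}|^2\,dx$ by $\|V_t\|$ in the limit. The Neumann condition enters essentially both through the cancellation of the boundary term for test fields tangent to $\partial\Omega$ and through the purely normal character of the residual boundary measure $\Theta$; the subsequence bookkeeping needed to keep the various weak limits compatible (and to reach the full–sequence lower limit in \eqref{eq:rec9}) is routine but requires care.
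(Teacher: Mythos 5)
Your proof is correct, and it rests on the same three pillars as the paper's argument --- the first--variation identity \eqref{eq:rec0}, the Cauchy--Schwarz estimate against the dissipation $\int_\Omega \e_i(\Delta u^{\e_i}-W'/\e_i^2)^2\,dx$, and the vanishing of the discrepancy from Proposition \ref{prop:6.1} --- but it organizes the boundary analysis differently. The paper never takes a weak limit of the boundary energy measures: it tests $\delta V_t$ with the approximately tangential field $\tilde g=g-(\nu^{\epsilon}\cdot g)\nu^{\epsilon}$, for which the boundary term in \eqref{eq:rec0} vanishes identically, bounds $\delta V_t(\tilde g)=\lim_j\delta V_t^{\e_{i_j}}(\tilde g)$ by Cauchy--Schwarz, and disposes of the correction $\delta V_t\lfloor_{\Omega}(g-\tilde g)$ by letting the collar width $\epsilon\to 0$, using the interior conclusion $h(V_t,\cdot)\in L^2(\|V_t\|)$ obtained first from $g\in C^1_c(\Omega;\R^n)$. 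You instead pass to weak limits of the three pieces of \eqref{eq:rec0} separately and obtain the global decomposition $\delta V_t=-H\|V_t\|+\nu\sigma$ with $H\in L^2(\|V_t\|;\R^n)$ and $\sigma\geq 0$ supported on $\partial\Omega$, from which the proposition follows by tangential projection of $H$ on $\partial\Omega$. Your route yields a slightly stronger structural statement (it controls all of $\delta V_t\lfloor_{\partial\Omega}$, not only its tangential part, exhibiting the residual boundary term as a nonnegative purely normal measure) and makes the role of the Neumann condition transparent; the price is that you must extract a weak limit of $\big(\tfrac{\e_i}{2}|\nabla u^{\e_i}|^2+\tfrac{W(u^{\e_i})}{\e_i}\big)\mathcal{H}^{n-1}\lfloor_{\partial\Omega}$, whose mass bound is exactly the second half of hypothesis \eqref{eq:rec4}, so nothing beyond the standing assumptions is used. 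The computations $H\cdot h_b=|h_b|^2$ and $|h_b|\leq|H|$ that close the loop for \eqref{eq:rec9} and \eqref{hel2} are right. The one genuinely delicate point, which you flag yourself, is upgrading the right-hand side of \eqref{eq:rec9} from a liminf along the chosen subsequence to the liminf along the full sequence while retaining \eqref{eq:rec4} (hence the identification of the limit varifold with $V_t$) along the realizing subsequence; the paper's own proof is no more explicit about this bookkeeping than yours, and the resolution --- the left-hand side of \eqref{eq:rec9} is subsequence--independent, so one reruns the argument on a further subsequence of the one realizing the liminf --- is the one you indicate.
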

\begin{proof}
Let $V_t^{\e_{i_j}}$ be a subsequence converging to $V_t$. For any 
$g\in C^1_c(\Omega;{\mathbb R}^n)$, we may prove from \eqref{eq:rec0} that
\begin{equation}
|\delta V_t (g)|=\lim_{j\rightarrow\infty}|\delta V_t^{\e_{i_j}}(g)|\leq\big(\int_{\Omega} |g|^2\, d\|V_t\|\big)^{\frac12}
 \liminf_{j\rightarrow\infty}\big( \int_{\Omega} \e_{i_j}\big(\Delta u^{\e_{i_j}}-\frac{W'}{\e_{i_j}^2}
 \big)^2\, dx\big)^\frac12.
 \label{eq:rec9.5}
 \end{equation}
This shows that $\|\delta V_t\lfloor_{\Omega}\|\ll \|V_t\|$ and $\delta V_t\lfloor_{\Omega}
=-h(V_t,\cdot)\|V_t\|$ for $h(V_t,\cdot)\in L^2(\|V_t\|)$.
Next, given arbitrary $\epsilon>0$, let $\nu^{\epsilon}\in C^1(\overline\Omega;{\mathbb R}^n)$ be such
that $\nu^{\epsilon}\lfloor_{\partial\Omega}=\nu$, $|\nu^{\epsilon}|\leq 1$ and ${\rm spt}\,\nu^{\epsilon}
\subset N_{\epsilon}$. For $g\in C^1(\overline\Omega;{\mathbb R}^n)$, define $\tilde g:= g-(\nu^{\epsilon}
\cdot g)\nu^{\epsilon}$. Then, we have $\tilde g\cdot\nu=0$ on $\partial\Omega$ thus
$\delta V_t\lfloor_{\partial\Omega}^{\top}(g)=\delta V_t\lfloor_{\partial\Omega}^{\top}(\tilde g)$. To prove \eqref{eq:rec8},
we note
\begin{equation}
\begin{split}
\delta V_t\lfloor_{\partial\Omega}^{\top}(g)&+\delta V_t\lfloor_{\Omega}(g)=\delta V_t\lfloor_{\partial \Omega}
(\tilde g)+\delta V_t\lfloor_{\Omega}(\tilde g)+\delta V_t\lfloor_{\Omega}(g-\tilde g) \\
&=\delta V_t(\tilde g)+\delta V_t\lfloor_{\Omega}(g-\tilde g) \\
&=\lim_{j\rightarrow\infty} \delta V_t^{\e_{i_j}}(\tilde g)
+\delta V_t\lfloor_{\Omega}(g-\tilde g) \\
&= \lim_{j\rightarrow\infty}\int_{\Omega}(\tilde g\cdot \nabla u^{\e_{i_j}})
(\e_{i_j}\Delta u^{\e_{i_j}}-\frac{W'}{\e_{i_j}})\, dx+\delta V_t\lfloor_{\Omega}(g-\tilde g) \\
&\leq \big(\int_{\overline\Omega} |g|^2\, d\|V_t\|\big)^{\frac12}
\liminf_{j\rightarrow\infty} \big(\int_{\Omega} \e_{i_j}\big(\Delta u^{\e_{i_j}} -\frac{W'}{\e_{i_j}^2}\big)^2\, dx\big)^{\frac12}
+\delta V_t\lfloor_{\Omega}(g-\tilde g)
\end{split}
\label{eq:rec10}
\end{equation}
where we used \eqref{eq:rec0}, \eqref{eq:rec5} and $|\tilde g|\leq |g|$. 
Since ${\rm spt}\, \nu^{\epsilon}\subset N_{\epsilon}$, we have
\begin{equation}
|\delta V_t\lfloor_{\Omega}(g-\tilde g)|=\big|\int_{\Omega} h(V_t,\cdot)\cdot (g-\tilde g)
\, d\|V_t\|\big|
\leq \sup|g| \int_{N_{\epsilon}}|h(V_t,\cdot)|\, d\|V_t\|\rightarrow 0
\label{eq:rec11}
\end{equation}
as $\epsilon\rightarrow 0$. Then \eqref{eq:rec10} and \eqref{eq:rec11} show \eqref{eq:rec9}
with $\phi=1$. For general $\phi\in C(\overline\Omega;{\mathbb R}^+)$, we may 
carry out an approximation argument to obtain \eqref{eq:rec9} (see \cite[Prop. 8.2]{arXiv:1307.6629}
for the detail). 
\end{proof}
The inequality \eqref{hel2} follows from \eqref{eq:rec9} with $\phi=1$ and \eqref{engu2}. 
\begin{prop}
\label{lemma7.4}
Let $t\geq 0$ and $\{V_t^{\e_{i_j}}\}_{j=1}^{\infty}$ be as in Proposition \ref{lemma7.2},
and define $h_b(t)$ as in \eqref{eq:rec8.5}. Then we have
\begin{equation}
\lim_{j\rightarrow\infty}\int_{\Omega}(g\cdot\nabla u^{\e_{i_j}})\big(
\e_{i_j}\Delta  u^{\e_{i_j}}-\frac{W'}{\e_{i_j}}\big)\, dx=- \int_{\overline\Omega}
g\cdot h_b(t)\, d\|V_t\|
\label{eq:rec12}
\end{equation}
for $g\in C^1(\overline\Omega;{\mathbb R}^n)$ with $g\cdot\nu=0$ on $\partial
\Omega$.
\end{prop}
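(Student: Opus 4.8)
The plan is to read off \eqref{eq:rec12} directly from the first-variation identity \eqref{eq:rec0}, combined with two facts already in hand: the vanishing of the discrepancy in the form \eqref{eq:rec5}, and the varifold convergence $V_t^{\e_{i_j}}\rightharpoonup V_t$ from Proposition \ref{lemma7.2}, together with the decomposition of $\delta V_t$ furnished by Theorem \ref{theorem2.5} and Proposition \ref{lemma7.35}. The point is that the left-hand side of \eqref{eq:rec12} is, up to terms controlled by $\int_\Omega|\xi^{\e_{i_j}}|\,dx$, exactly $\delta V_t^{\e_{i_j}}(g)$.

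First I would fix $g\in C^1(\overline\Omega;{\mathbb R}^n)$ with $g\cdot\nu=0$ on $\partial\Omega$ and apply \eqref{eq:rec0} with $\e=\e_{i_j}$. Since $g\cdot\nu\equiv 0$ on $\partial\Omega$, the boundary integral in \eqref{eq:rec0} vanishes identically. The discrepancy integral $\int_{\Omega\cap\{|\nabla u^{\e_{i_j}}|\neq0\}}\nabla g\cdot(a^{\e_{i_j}}\otimes a^{\e_{i_j}})\,\xi^{\e_{i_j}}\,dx$ is bounded in absolute value by $C\sup_{\overline\Omega}|\nabla g|\int_\Omega|\xi^{\e_{i_j}}|\,dx$, which tends to $0$ by \eqref{eq:rec5}. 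Moreover, on $\{|\nabla u^{\e_{i_j}}|=0\}$ one has $W(u^{\e_{i_j}})/\e_{i_j}=|\xi^{\e_{i_j}}|$ straight from the definition of $\xi^{\e_{i_j}}$, so the remaining term $\int_{\Omega\cap\{|\nabla u^{\e_{i_j}}|=0\}}\nabla g\cdot I\,(W/\e_{i_j})\,dx$ is likewise $O\big(\sup|\nabla g|\int_\Omega|\xi^{\e_{i_j}}|\,dx\big)\to 0$. Hence
\[
\int_{\Omega}(g\cdot\nabla u^{\e_{i_j}})\big(\e_{i_j}\Delta u^{\e_{i_j}}-\tfrac{W'}{\e_{i_j}}\big)\,dx=\delta V_t^{\e_{i_j}}(g)+o(1)\qquad(j\to\infty).
\]
Because $(x,S)\mapsto\nabla g(x)\cdot S$ is continuous on the compact space $G_{n-1}(\overline\Omega)$ and $V_t^{\e_{i_j}}\rightharpoonup V_t$ as varifolds, the right-hand side converges to $\delta V_t(g)$.

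It then remains to identify $\delta V_t(g)$ with $-\int_{\overline\Omega}g\cdot h_b(t)\,d\|V_t\|$. Since $\|\delta V_t\|$ is a finite Radon measure on $\overline\Omega=\Omega\sqcup\partial\Omega$ by \eqref{eq:rec4.5}, I would split $\delta V_t(g)=\delta V_t\lfloor_{\Omega}(g)+\delta V_t\lfloor_{\partial\Omega}(g)$. On $\partial\Omega$ the hypothesis $g\cdot\nu=0$ gives $g=g-(g\cdot\nu)\nu$, so by the definition \eqref{deftan} one has $\delta V_t\lfloor_{\partial\Omega}(g)=\delta V_t\lfloor_{\partial\Omega}^{\top}(g)$. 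Therefore $\delta V_t(g)=\big(\delta V_t\lfloor_{\partial\Omega}^{\top}+\delta V_t\lfloor_{\Omega}\big)(g)$, and Theorem \ref{theorem2.5} (valid for the present $t$ through Proposition \ref{lemma7.35}) together with the definition \eqref{eq:rec8.5} of $h_b(t)$ identifies this with $-\int_{\overline\Omega}g\cdot h_b(t)\,d\|V_t\|$, which is \eqref{eq:rec12}.

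All the estimates here are routine; the only place that needs a little care is the bookkeeping in the last step — checking that the tangential-component definition \eqref{deftan} is invoked correctly and that the splitting $\delta V_t=\delta V_t\lfloor_\Omega+\delta V_t\lfloor_{\partial\Omega}$ is applied to a vector field that is merely $C^1$ up to $\overline\Omega$ rather than compactly supported in $\Omega$. There is no genuine obstacle: the substance of the proposition lies in the earlier results (the discrepancy estimate, the varifold convergence, and the absolute continuity of the modified first variation), and Proposition \ref{lemma7.4} simply repackages them into the form needed for the proof of the Brakke-type inequality \eqref{mainineq}.
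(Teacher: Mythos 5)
Your argument is correct and is essentially the paper's own proof: both read \eqref{eq:rec12} off the first-variation identity \eqref{eq:rec0}, using $g\cdot\nu=0$ to kill the boundary term, \eqref{eq:rec5} to kill the two discrepancy terms, varifold convergence to pass $\delta V_t^{\e_{i_j}}(g)\to\delta V_t(g)$, and the identity $\delta V_t(g)=\delta V_t\lfloor_{\Omega}(g)+\delta V_t\lfloor_{\partial\Omega}^{\top}(g)$ together with \eqref{eq:rec8.5}. Your write-up merely spells out the routine estimates that the paper leaves implicit.
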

\begin{proof}
Since $V_t^{\e_{i_j}}$ converges to $V_t$ as varifold, we have 
$\lim_{j\rightarrow\infty}\delta V_t^{\e_{i_j}}(g)=\delta V_t(g)$. On the 
right-hand side of \eqref{eq:rec0}, since $g\cdot\nu=0$ on $\partial\Omega$, 
the third boundary integral term vanishes. Then we have \eqref{eq:rec12} 
from \eqref{eq:rec0}, \eqref{eq:rec5} and $\delta V_t(g)=\delta V_t\lfloor_{\Omega}(g)+
\delta V_t\lfloor_{\partial\Omega}^{\top}(g)$. 
\end{proof}

Finally we give
\begin{proof}
[Proof of Theorem \ref{theorem2.6}] It is enough to 
prove \eqref{mainineq} for $\phi\in C^2(\overline\Omega\times[0,\infty)\,;\,
{\mathbb R}^+)$ with $\nabla\phi(\cdot,t)\cdot \nu=0$ on $\partial \Omega$.
By writing $f^{\e_i}:=-\e_i\Delta u^{\e_i}+\frac{W'}{\e_i}$, we have
from \eqref{eq:1.1}
\begin{equation}
\int_{\Omega} \phi\, d\mu_t^{\e_i}\Big|_{t=t_1}^{t_2}
=\int_{t_1}^{t_2}\Big( \int_{\Omega}-\frac{1}{\e_i}(f^{\e_i})^2\phi
+f^{\e_i}\nabla\phi\cdot\nabla u^{\e_i}\, dx
+\int_{\Omega}\partial_t\phi\,d\mu_t^{\e_i}\Big)\, dt.
\label{eq:rec13}
\end{equation}
Since we already know that $\mu_t^{\e_i}\rightharpoonup \|V_t\|$
for all $t\geq 0$, the left-hand side of \eqref{eq:rec13} converges to 
that of \eqref{mainineq}, and so is the last term of the right-hand side. 
So we only need to consider the first and second terms of the right-hand 
side. Just as in the proof of Lemma \ref{lem:5.1}, $\int_{\Omega}
(\e_i^{-1}(f^{\e_i})^2\phi-f^{\e_i}\nabla\phi\cdot\nabla u^{\e_i})\, dx\geq 
-\Cr{c-1}\|\phi\|_{C^2}$. Thus by Fatou's lemma,
\begin{equation}
\lim_{i\rightarrow\infty} \int_{t_1}^{t_2}\int_{\Omega}\frac{1}{\e_i}(f^{\e_i})^2\phi
-f^{\e_i}\nabla\phi\cdot\nabla u^{\e_i}\, dxdt
 \geq \int_{t_1}^{t_2}\liminf_{i\rightarrow\infty}
\int_{\Omega}\frac{1}{\e_i}(f^{\e_i})^2\phi-f^{\e_i}\nabla\phi\cdot\nabla u^{\e_i}
\, dxdt.
\label{eq:rec14}
\end{equation}
Thus from \eqref{eq:rec13} and \eqref{eq:rec14}, we will finish the proof if we 
prove
\begin{equation}
\liminf_{i\rightarrow\infty}\int_{\Omega}\frac{1}{\e_i}(f^{\e_i})^2\phi
-f^{\e_i}\nabla\phi\cdot\nabla u^{\e_i}\, dx\geq \int_{\overline\Omega}
\phi|h_b|^2-h_b\cdot\nabla\phi\, d\|V_t\|
\label{eq:rec15}
\end{equation}
for a.e$.$ $t\in [t_1,t_2]$. For a.e$.$ $t$ where the assumption of Proposition \ref{lemma7.2}
is satisfied, we have already proved \eqref{eq:rec9} and \eqref{eq:rec12}. But this 
shows precisely \eqref{eq:rec15}. This ends the proof of \eqref{mainineq}.
\end{proof}
\section{Final remarks}
\label{finalremark}
It seems likely that, if $\|V_0\|(\partial\Omega)=0$, then $\|V_t\|(\partial\Omega)=0$
holds for all $t>0$. Intuitively, due to the strict convexity of the domain and the
Neumann boundary condition (which should intuitively imply 90 degree angle of intersection), 
interior of moving hypersurfaces should not touch $\partial \Omega$. 
Due to the maximum principle, this cannot
happen if the hypersurfaces are smooth up to the boundary. 
But within the general framework of this paper, we do not know
how to prove such statement or if it is indeed true. 

Though it may first appear counter intuitive in view of the connection to the MCF, 
if we have $\|V_0\|(\partial\Omega)>0$, then it is possible to have $\|V_t\|(\partial\Omega)>0$
for all $t>0$. An example can be provided by a limit of time-independent solutions
of \eqref{eq:1.1} where $\mu^{\e}\rightharpoonup c
{\mathcal H}^{n-1}\lfloor_{\partial\Omega}$ on $\overline\Omega$ as $\e\rightarrow 0$, where $c>0$
is some constant. One can obtain such family of solutions $u^{\e}$ by considering $\Omega=B_1$ 
and a mountain path solution connecting two constant functions $1$ and $-1$ 
within a class of radially symmetric functions. There are uniform positive lower and upper bounds of $E^{\e}(u^{\e})$
and the limiting varifold $V$ is non-trivial. On the other hand, if $\|V\|(B_1)>0$, 
due to \cite{Hutchinson}, ${\rm spt}\,\|V\|$ has to be a minimal surface, 
which contradicts the radially symmetry. Thus $\|V\|$
is concentrated only on $\partial B_1$ and is non-trivial. In this particular case, note that
$\delta V=-\frac{x}{|x|}{\mathcal H}^{n-1}\lfloor_{\partial B_1}$ 
and the tangential component $\delta V\lfloor_{\partial B_1}^{\top}$ is 0.
Using more explicit and sophisticated method, Malchiodi-Ni-Wei \cite{Malchiodi2} constructed a family of
solutions with multiple layers whose energy concentrates on $\partial B_1$ with $\|V\|(\partial B_1)=
N\sigma {\mathcal H}^{n-1}$, $N\in {\mathbb N}$. $N$ may be arbitrarily chosen. 
Furthermore, for general strictly mean convex domain $\Omega$, Malchiodi-Wei \cite{Malchiodi} 
constructed a family of single layered solutions whose limit energy concentrates on $\partial \Omega$. 
Even though such limit measures are not certainly the MCF in ${\mathbb R}^n$ 
in the usual sense (it should shrink),
such time independent 
measures satisfies \eqref{mainineq} trivially since $h_b=0$. This is the reason that
we need to decompose the first variation on $\partial\Omega$ to accommodate such 
cases in general. 

The existence result of the present paper suggests a reasonable setting for  proving
the boundary regularity of MCF. It is
interesting to extend interior regularity theorem (see \cite{MR0485012,arXiv:1111.0824,Tonegawa2})
to the corresponding boundary regularity theorem. For the time-independent case, 
interior regularity \cite{Allard} has been extended to boundary regularity
\cite{MR0397520,arXiv:1008.4728,MR0863638}.

It is worthwhile to comment on the strict convexity assumption. The places
the condition played any role are in the proof of Proposition \ref{prop:4.1} via Lemma \ref{lem:4.1}, 
and in some computations such as \eqref{eq:6s3} and before \eqref{eq:6.13}. Even without
strict convexity on the whole of $\Omega$, one can in fact localize these arguments. Namely,
for general bounded domain with smooth boundary $\Omega$, let $\Gamma\subset \partial\Omega$
be a set of points with some non-positive principal curvature. Then one can carry out the argument of this 
paper for $\overline\Omega\setminus (\Gamma)_{\epsilon}$, where $(\Gamma)_{\epsilon}$ is the 
$\epsilon$-neighborhood of $\Gamma$. All the statements in Section \ref{main} hold with $\overline\Omega
\setminus \Gamma$ in place of $\overline\Omega$. We did not write the paper in this generality 
to avoid further notational complications. 
\section{Appendix}
\label{sec:A}
We include a lemma which appeared in \cite{MR1237490} for reader's convenience.
\begin{equation*}
 \rho_y^r(x)
  :=\frac{1}{(\sqrt{2\pi}r)^{n-1}}\exp\left(-\frac{|x-y|^2}{2r^2}\right).
\end{equation*}
Then, $\rho_{(y,s)}=\rho_y^r$ when $r^2=2(s-t)$.

\begin{lemma}
 \label{lem:A.1}
 Let $\mu$ be a Radon measure on $\R^n$ satisfying for some $D>0$
 \begin{equation}
   \label{eq:A.2}
    \frac{\mu(B_R(x))}{\omega_{n-1}R^{n-1}}\leq D
  \end{equation}
 for $R>0$ and $x\in\R^n$. Then we obtain the following:
 \begin{enumerate}[(1)]
  \item For $r>0$ and for $x\in\R^n$,
	\begin{equation*}
	 \int_{\R^n}\rho_x^r\,d\mu\leq D
	\end{equation*}	
  \item For $r,R>0$ and for $x\in\R^n$,
	\begin{equation*}
	 \int_{\R^n\setminus B_R(x)}\rho_x^r\,d\mu
	  \leq 2^{n-1}e^{-\frac{3R^2}{8r^2}}D
	\end{equation*}
  \item For $\delta>0$ there is $\gamma_1>0$ depending only on $n$ and
	$\delta$ such that for $x,x_0\in\R^n$ and $r>0$ satisfying
	$|x-x_0|<\gamma_1r$ we have
	\begin{equation*}
	 \int_{\R^n}\rho_{x_0}^r\,d\mu
	  \leq (1+\delta)\int_{\R^n}\rho_x^r\,d\mu
	  +\delta D.
	\end{equation*}
  \item For $\delta>0$ there is $\gamma_2>0$ depending only on $n$ and
	$\delta$ such that for $x\in\R^n$ and $r,R>0$ satisfying
	$1\leq\frac{R}{r}\leq1+\gamma_2$ we have
	\begin{equation*}
	 \int_{\R^n}\rho_{x}^R\,d\mu
	\leq (1+\delta)\int_{\R^n}\rho_x^r\,d\mu
	+\delta D.
	\end{equation*}
 \end{enumerate}
\end{lemma}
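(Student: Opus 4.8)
The plan is to reduce all four statements to one-dimensional Gaussian estimates governed by the density bound \eqref{eq:A.2}. Fix $x\in\R^n$ and write $\phi(\sigma):=(\sqrt{2\pi}\,r)^{-(n-1)}e^{-\sigma^2/2r^2}$, so that $\rho_x^r(y)=\phi(|y-x|)$ and, since $-\phi'(\tau)=(\tau/r^2)\phi(\tau)$ and $\phi(\infty)=0$, we have $\phi(\sigma)=\int_\sigma^\infty(\tau/r^2)\phi(\tau)\,d\tau$. By Tonelli's theorem, for any Borel set $A\subset\R^n$,
\[
\int_A\rho_x^r\,d\mu=\int_0^\infty\frac{\tau}{r^2}\,\phi(\tau)\,\mu\big(\{y\in A:|y-x|<\tau\}\big)\,d\tau .
\]
Taking $A=\R^n$ and using $\mu(B_\tau(x))\le D\omega_{n-1}\tau^{n-1}$ from \eqref{eq:A.2} gives $\int_{\R^n}\rho_x^r\,d\mu\le D\omega_{n-1}\int_0^\infty(\tau^n/r^2)\phi(\tau)\,d\tau$, and the substitution $u=\tau^2/2r^2$ evaluates the last integral to $\omega_{n-1}^{-1}$, so the right-hand side equals $D$; equivalently, the bound is saturated by $\mu=D\,\mathcal H^{n-1}\lfloor P$ for a hyperplane $P\ni x$. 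This proves (1).

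For (2) I would take $A=\R^n\setminus B_R(x)$ in the same identity: then $\mu(\{y\in A:|y-x|<\tau\})$ vanishes for $\tau\le R$ and is $\le D\omega_{n-1}\tau^{n-1}$ for $\tau>R$, so $\int_{\R^n\setminus B_R(x)}\rho_x^r\,d\mu\le D\omega_{n-1}\int_R^\infty(\tau^n/r^2)\phi(\tau)\,d\tau$. Using $e^{-\tau^2/2r^2}\le e^{-3R^2/8r^2}e^{-\tau^2/8r^2}$ for $\tau\ge R$ pulls the factor $e^{-3R^2/8r^2}$ out of the integral and leaves a Gaussian moment bounded by a dimensional constant, which yields (2).

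For (3) and (4) I would split each integral into the part over a large ball $B_{cr}(x)$ and the part over its complement. Given $\delta>0$, I first fix $c=c(n,\delta)$ large enough that, by (2), the tail of the relevant kernel over the complement of a ball of radius $cr$ about its centre is at most $\delta D/2$. For (3): on $B_{cr}(x)$ we have $\rho_{x_0}^r(y)/\rho_x^r(y)=\exp\big((|y-x|^2-|y-x_0|^2)/2r^2\big)$, and since $|y-x|^2-|y-x_0|^2\le|x-x_0|(|y-x|+|y-x_0|)\le\gamma_1 r(2cr+\gamma_1 r)$ there, this ratio is $\le1+\delta/2$ as soon as $\gamma_1=\gamma_1(n,\delta)$ is small; combining $\int_{B_{cr}(x)}\rho_{x_0}^r\,d\mu\le(1+\tfrac\delta2)\int_{\R^n}\rho_x^r\,d\mu$ with the tail estimate (noting $\R^n\setminus B_{cr}(x)\subset\R^n\setminus B_{(c-1)r}(x_0)$ when $\gamma_1<1$) gives (3). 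For (4): on $B_{cr}(x)$ we have $\rho_x^R(y)/\rho_x^r(y)=(r/R)^{n-1}\exp\big(|y-x|^2(\tfrac1{2r^2}-\tfrac1{2R^2})\big)\le\exp\big(\tfrac{c^2}{2}(1-r^2/R^2)\big)$, which is $\le1+\delta/2$ once $\gamma_2=\gamma_2(n,\delta)$ is small (using $1\le R/r\le1+\gamma_2$), while the complementary tail is controlled by (2); adding the two estimates gives (4). There is no essential difficulty here: the only points requiring attention are the order in which the constants are chosen (first $c$ from $(n,\delta)$, then $\gamma_1$ or $\gamma_2$), the harmlessness of sphere-boundary sets (absorbed by the density bound in the Tonelli identity), and the routine bookkeeping of the Gaussian-moment constants in (2)–(4).
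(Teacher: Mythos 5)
Your overall strategy is sound and is essentially the standard one (the paper does not reprove this lemma; it cites Ilmanen, whose argument is the same in outline: the layer--cake identity against the density bound for (1), comparison with a dilated kernel for (2), and a near/far splitting for (3) and (4)). Parts (1), (3) and (4) are correct as written, including the order in which you fix $c$ before $\gamma_1$, $\gamma_2$, and the normalization computation $\omega_{n-1}\int_0^\infty (\tau^n/r^2)\phi(\tau)\,d\tau=1$, which is exactly the statement that $\rho_x^r$ integrates to $1$ over a hyperplane through $x$.

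The one genuine (though minor) defect is in (2). Following your route literally, after extracting $e^{-3R^2/8r^2}$ you are left with
$\omega_{n-1}\int_0^\infty (\tau^n/r^2)(\sqrt{2\pi}\,r)^{-(n-1)}e^{-\tau^2/8r^2}\,d\tau$; rewriting the integrand in terms of the kernel with parameter $2r$ gives $2^{n-1}\cdot 4=2^{n+1}$, not the stated constant $2^{n-1}$ --- the extra factor $4$ comes from $1/r^2=4/(2r)^2$. So you prove the inequality only with a worse dimensional constant, whereas the lemma asserts $2^{n-1}$. The clean fix is to avoid the layer--cake identity here and instead use the pointwise bound
$\rho_x^r(y)\le 2^{n-1}e^{-3R^2/8r^2}\rho_x^{2r}(y)$ for $|y-x|\ge R$ (which is exactly your inequality $e^{-|y-x|^2/2r^2}\le e^{-3R^2/8r^2}e^{-|y-x|^2/2(2r)^2}$ together with $(\sqrt{2\pi}\,r)^{-(n-1)}=2^{n-1}(\sqrt{2\pi}\,2r)^{-(n-1)}$), and then apply part (1) with radius $2r$. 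Since (3) and (4), and every use of the lemma in the paper, need (2) only up to a dimensional constant, nothing downstream is affected, but as a proof of the lemma as stated you should make this adjustment.
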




\end{document}